\documentclass[12pt]{amsart} 

\usepackage{amsmath, amssymb, amsfonts, amsbsy, amsthm, latexsym, stmaryrd, mathtools, fullpage, enumerate, ytableau, shuffle, multirow, mathdots, enumerate, tikz, hyperref, float}

\usepackage{comment}
\usepackage{mathtools}

\allowdisplaybreaks

%Lemma/Definition styles
\newtheorem{theorem}{Theorem}[section]
\newtheorem{lemma}[theorem]{Lemma}
\newtheorem{proposition}[theorem]{Proposition}
\newtheorem{corollary}[theorem]{Corollary}
\theoremstyle{definition}

\newtheorem{remark}[theorem]{Remark}
\newtheorem{definition}[theorem]{Definition}
\newtheorem*{definition*}{Definition}
\newtheorem*{theorem*}{Theorem}
\newtheorem*{proposition*}{Proposition}
\newtheorem*{corollary*}{Corollary}

\newcommand{\Z}{\mathbb{Z}}
\newcommand{\cal}[1]{\mathcal{#1}}
\newcommand{\hatcal}[1]{\widehat{\mathcal{#1}}}
\renewcommand{\hat}[1]{\widehat{#1}}

\usepackage[colorinlistoftodos]{todonotes}

\author{Yibo Gao}
\address{Department of Mathematics, Massachusetts Institute of Technology, \mbox{Cambridge, MA 02139}}
\email{\href{mailto:gaoyibo@mit.edu}{{\tt gaoyibo@mit.edu}}}
\author{Zhaoqi Li}
\address{Department of Mathematics, Statistics, and Computer Science, Macalester College, \newline St Paul, MN 55105}
\email{\href{mailto:zli@macalester.edu}{{\tt zli@macalester.edu}}}
\author{Thuy-Duong Vuong}
\address{Department of Electrical Engineering and Computer Science, Massachusetts Institute of Technology, \mbox{Cambridge, MA 02139}}
\email{\href{mailto:dvuong@mit.edu}{{\tt dvuong@mit.edu}}}
\author{Lisa Yang}
\address{Department of Mathematics, Massachusetts Institute of Technology, \mbox{Cambridge, MA 02139}}
\email{\href{mailto:lisayang@mit.edu}{{\tt lisayang@mit.edu}}}

\begin{document}
\title{Toric Mutations in the dP$_2$ Quiver and Subgraphs of the dP$_2$ Brane Tiling}
\date{\today}

\begin{abstract}
Brane tilings are infinite, bipartite, periodic, planar graphs that are dual to quivers. In this paper, we examine the del Pezzo 2 (dP$_2$) quiver and its brane tiling, which arise from the physics literature, in terms of toric mutations on its corresponding cluster. Specifically, we give explicit formulas for all cluster variables generated by toric mutation sequences. Moreover, for each such variable, we associate a subgraph of the dP$_2$ brane tiling to it such that its weight matches the variable.
\end{abstract}
\maketitle

%toc? or toc in intro
\section{Introduction}
Cluster algebras are a class of commutative rings generated by cluster variables, which are partitioned into sets called clusters. Given an initial seed, an operation known as seed mutation can be applied iteratively to generate all cluster variables. The concept of cluster algebras was first introduced by Fomin and Zelevinsky \cite{fomin2002cluster} as a tool to study total positivity and dual canonical bases in Lie theory. They have rich applications in different branches of mathematics, including algebraic combinatorics, tropical geometry, Teichmuller theory, and representation theory. 

It is common to picture a cluster as a quiver with a cluster variable on each vertex. Some special quivers have planar duals, known as brane tilings, which are doubly-periodic, bipartite, planar graphs. The notion of brane tilings was first introduced in theoretical physics \cite{franco2006brane}. For such quivers, combinatorial interpretations of the cluster variables have been obtained by associating a subgraph of the brane tiling to each cluster variable such that the Laurent polynomial of the cluster variable is recoverable from a weighting scheme applied to the subgraph.  See  \cite{musiker2010cluster}, \cite{musiker2011graph}, and \cite{lee2013combinatorial}. In particular, the quiver and brane tiling of the third del Pezzo (dP$_3$) surface \cite{hanany2012brane} has been studied widely by \cite{zhangcluster}, \cite{leoni2014aztec}, and \cite{lai2015beyond}. 
In this paper, we generalize the techniques utilized in these papers and focus on the second del Pezzo (dP$_2$) surface. Specifically, we classify all cluster variables generated by toric mutations and give combinatorial interpretations for them. 

In Section~\ref{sec:prelim}, we start with background material on quivers and cluster mutations, and then introduce our main objects of the paper, the dP$_2$ quiver and its corresponding brane tiling. Our contribution begins in Section~\ref{sec:rho}, where we introduce $\rho$-mutations (Definition~\ref{def:rho}) so that all toric mutations can be written in a specific form of $\rho$-mutation sequences (Theorem~\ref{thm:rho}). From there, we are able to explicitly write down the cluster variables arise from these $\rho$-mutation sequences (Theorem~\ref{thm:rhocv}) and classify all of them in a simple format (Corollary~\ref{cor:rhocv}). In the second half of the paper, we give combinatorial interpretations for these cluster variables by associating subgraphs of the dP$_2$ brane tiling to them, such that the ``weights" of the subgraphs equal the cluster variables. Our main theorem (Theorem~\ref{thm:contours}) provides each cluster variable with a contour, where the weighting scheme is described in Section~\ref{sec:weight} and the rules to obtain a subgraph from a contour is given in Section~\ref{sec:contour}. The proof of our main theorem is shown in Section~\ref{sec:proof}, which is done by induction and case work. We provide detailed description of our proof techniques and sample proofs for a few cases. The rest of the cases are proved in the same way with necessary data given in Appendix~\ref{sec:appendix}. We finish with a discussion regarding a similar quiver and brane tiling, which also generate the Laurent polynomials of the Somos-5 sequence, studied by David Speyer \cite{speyer2007perfect} in Section~\ref{sec:speyer}.

\section{Preliminaries}\label{sec:prelim}
\subsection{Quiver and Cluster Mutations}
\begin{definition}[Quiver and Cluster]
A \textit{quiver} is a finite directed graph $Q$ with a set of vertices $V$ and a set of edges $E$. We can associate a cluster variable $x_i$ to the vertex labeled $i$. The \textit{cluster} is the ordered set of cluster variables $\{x_1,\ldots,x_n\}$ at each vertex, assuming $|V|=n$.  For a cluster $S = \{x_1, \ldots, x_n\}$, let $S[i]$ refer to the $i$th cluster variable.
\end{definition}
In this paper, we allow quivers to have multiple edges connecting two vertices but there can be no 2-cycles or 1-cycles (loops).

\begin{definition}[Quiver Mutation]
Mutating at a vertex $i$ in $Q$ is denoted by $\mu_i$ and corresponds to the following actions on the quiver:
\begin{itemize}
\item For every 2-path through $i$ (e.g. $j\rightarrow i\rightarrow k$), add an edge from $j$ to $k$.
\item Reverse the directions of the arrows incident to $i$.
\item Delete any 2-cycles created from the previous two steps.
\end{itemize}
Correspondingly, the cluster variable at vertex $i$ is updated and all other cluster variables stay the same. The update follows this binomial exchange relation:
$$x_i'x_i=\prod_{i\rightarrow j\text{ in }Q}x_j^{a_{i\rightarrow j}}+\prod_{j\rightarrow i\text{ in }Q}x_j^{a_{j\rightarrow i}},$$
where $x_i'$ is the new cluster variable at vertex $i$ and $a_{i\rightarrow j}$ is the number of edges from $i$ to $j$.
\end{definition}
The binomial exchange relation replaces $S[i]$ by the new cluster variable $x_i'$.  We denote this replacement by 
$$S[i] \leftarrow \frac{\prod\limits_{i\rightarrow j\text{ in }Q}x_j^{a_{i\rightarrow j}}+\prod\limits_{j\rightarrow i\text{ in }Q}x_j^{a_{j\rightarrow i}}}{x_i}.$$

\subsection{The Del Pezzo 2 Quiver and its Brane Tiling}
In this paper, we study a special quiver associated to the second del Pezzo surface (dP$_2$) \cite{beasley2001toric} and its brane tiling, as seen in Figure \ref{fig:dp2}.
\begin{figure}[h!]
\includegraphics[scale=0.3]{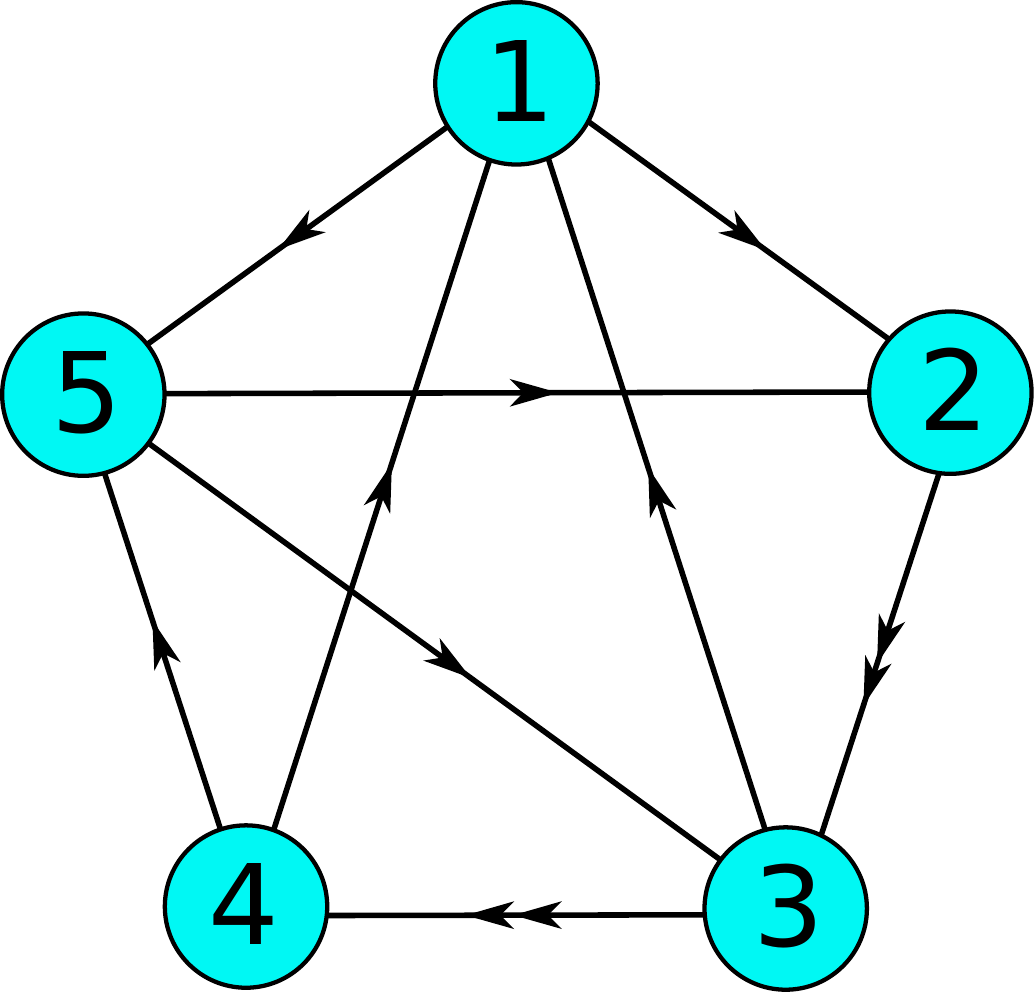}
\qquad
\includegraphics[scale=0.3]{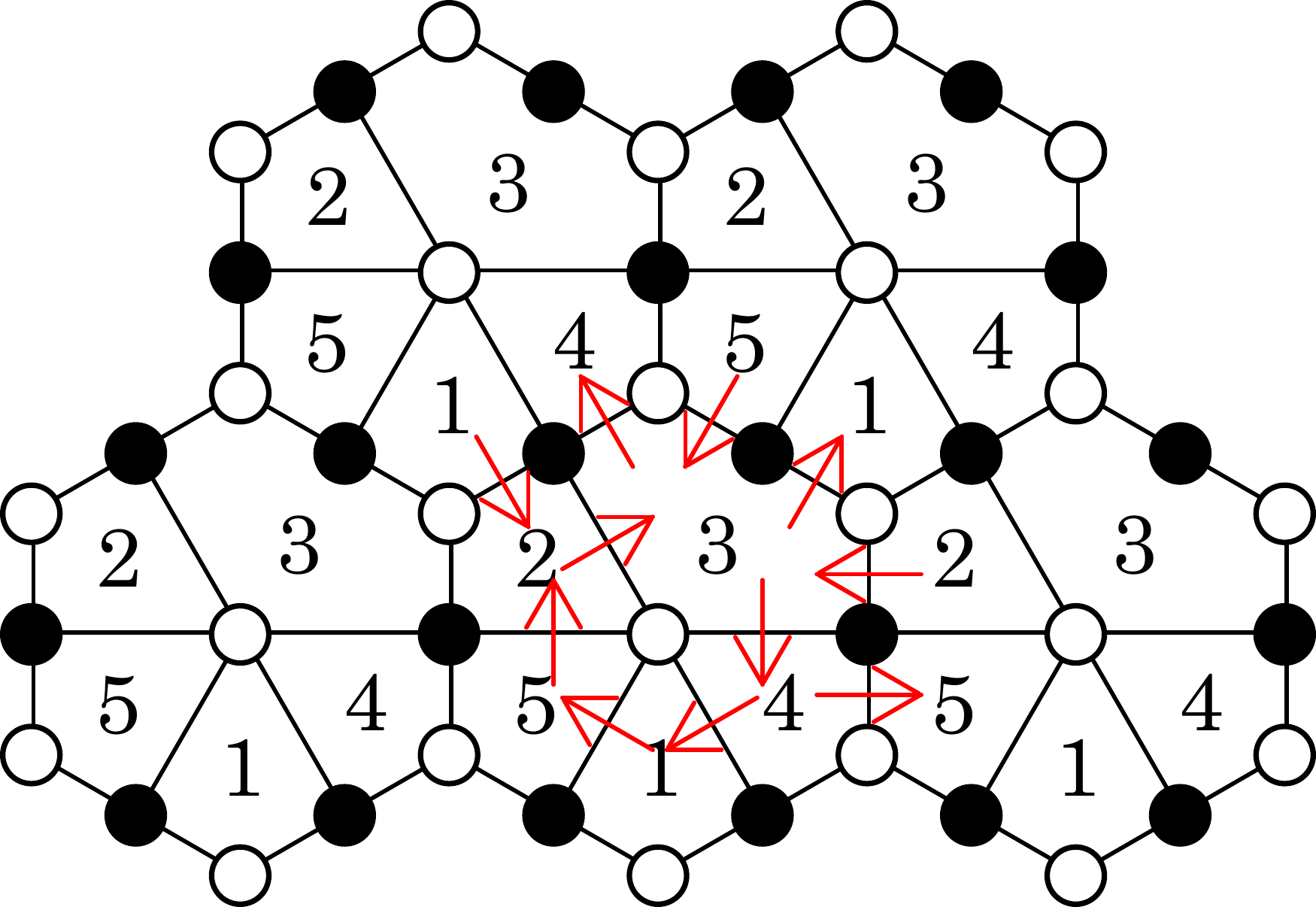}
\caption{dP$_2$ quiver $Q$ and its associated brane tiling $\cal{T}$ (Figure 30 of \cite{hanany2012brane})}
\label{fig:dp2}
\end{figure}

To get from a brane tiling to the corresponding quiver, we look at each edge $e$ up to translation, noticing that any brane tiling is periodic, bipartite and planar. Assume that $e$ borders block $i$ and $j$ such that as we go across from block $i$ to block $j$, the black end point of $e$ is on the left and the white end point of $e$ is on the right. For this edge $e$, we add an edge in the quiver that goes from $i$ to $j$. The red arrows in Figure \ref{fig:dp2} show this process.

We use $Q$ to denote the dP$_2$ quiver and $\cal{T}$ to denote its associated brane tiling
.
\subsection{Toric Mutation and Two Models of Quivers}
\begin{definition}[Toric Vertex and Toric Mutation]
We say that a vertex in a quiver is \textit{toric} if it has in-degree 2 and out-degree 2. A \textit{toric mutation} is a cluster mutation at a toric vertex.
\end{definition}
\begin{definition}[Model]
We say that two quivers $Q_1$ and $Q_2$ are of the same \textit{model} if they are isomorphic as directed graphs (there exists a bijection between their vertices that preserves edges), or if $Q_1$ is isomorphic as graph to $Q_2$ with all edges in $Q_2$ reversed.
\end{definition}
It is easy to check that the dP$_2$ quiver $Q$ has two models that can be reached from the original quiver by toric mutations. Use \textbf{Model 1} to denote the original quiver $Q$ and \textbf{Model 2} to denote the quiver obtained from $Q$ by mutating at vertex 2. Figure \ref{fig:model} shows these two models. As a side note, the word ``model'' is also seen as ``phase'' in the literature \cite{hanany2012brane}.
\begin{figure}[h!]
\includegraphics[scale=0.3]{model1.pdf}
\qquad
\includegraphics[scale=0.3]{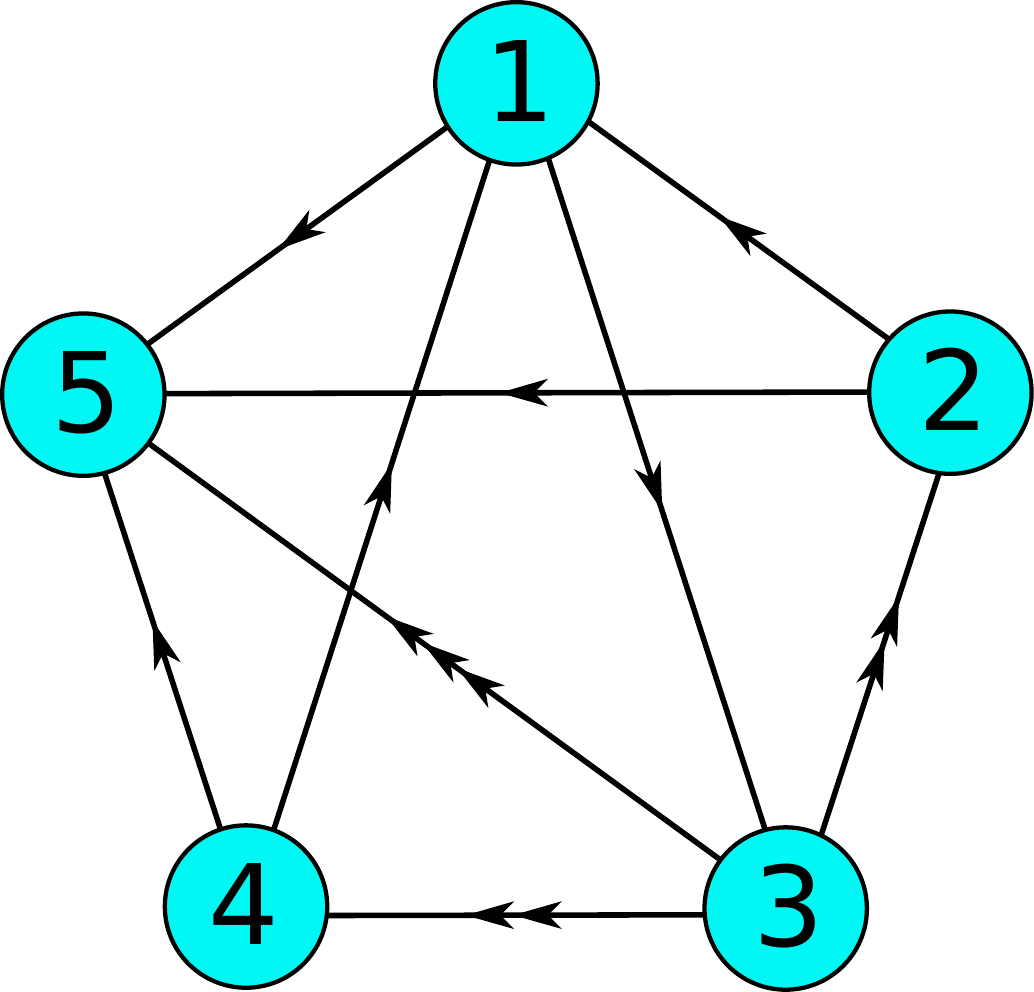}
\caption{model 1 and model 2 of the dP$_2$ quiver (Figure 30 and 31 of \cite{hanany2012brane})}
\label{fig:model}
\end{figure}

Transitions between these two models are shown in Figure \ref{fig:modeladj}.
\begin{figure}[h!]
\includegraphics[scale=0.4]{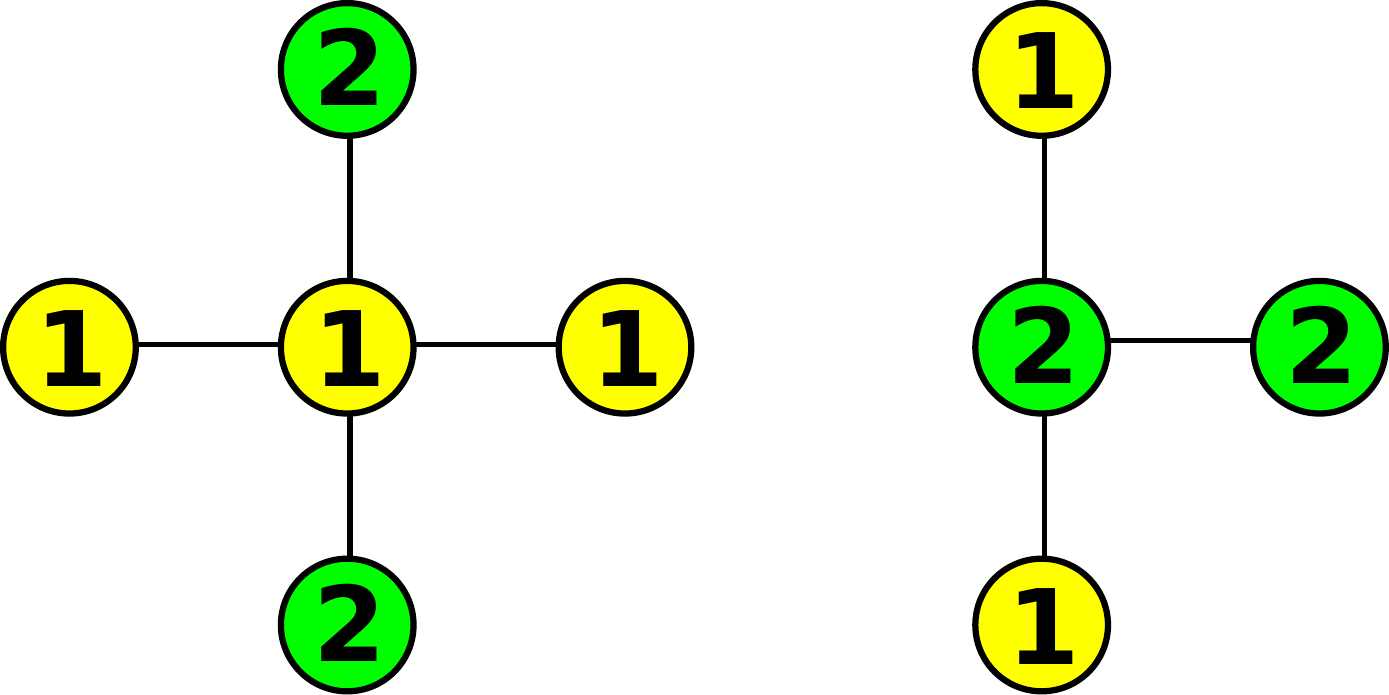}
\caption{Adjacency between different models (Figure 18 of \cite{eager2012colored})}
\label{fig:modeladj}
\end{figure}

\section{Classification of Toric Mutation Sequences}\label{sec:rho}

\begin{definition}\label{def:rho} [$\rho$-mutation sequences]
We define the following operation sequences consisting of mutations and permutations, where concatenation of operations is done from left to right. A permutation permutes the vertices and their associated cluster variables accordingly. 
$$\rho_1 = \mu_1\circ(54321),\quad
\rho_2 = \mu_5\circ(12345),\quad\rho_3 = \mu_2\circ\mu_4\circ(24),$$
$$\rho_4 = \mu_2\circ\mu_1\circ\mu_4\circ(531),\quad\rho_5 = \mu_4\circ\mu_5\circ\mu_2\circ(351),$$
$$\rho_6=\mu_2\circ\mu_1\circ\mu_2\circ(531)(24),\quad\rho_7=\mu_4\circ\mu_5\circ\mu_4\circ(135)(24).$$
We call each $\rho_i$ a $\rho$-mutation and any concatenation of $\rho_i$'s a $\rho$-mutation sequence.
\end{definition}
As a side note, it is technically more correct to name ``$\rho$-mutation'' as ``$\rho$-operation''. However, we follow conventions set in \cite{leoni2014aztec} and \cite{lai2015beyond} and thus choose the name ``$\rho$-mutation''.

These $\rho$-mutations all fix the quiver (but not the cluster variables), that is, $\rho_i(Q)=Q$, for $i=1,\ldots,7$. Notice that in the original quiver $Q$, there are no edges connecting vertex 2 and 4. This means mutation at 2 and mutation at 4 commute, so $\rho_3$ can also be written as $\rho_3=\mu_4\circ\mu_2\circ(24).$

It is easy to construct Figure~\ref{fig:model1-1}, which shows all possible toric mutation sequences that start from the original dP$_2$ quiver and return to model 1 the first time, from Figure~\ref{fig:modeladj}. In this way, it is clear that combinations of these seven $\rho$-mutations give us all possible toric mutation sequences that start in model 1 and end in model 1 up to a permutation of vertices.
\begin{figure}[h!]
\includegraphics[scale=0.4]{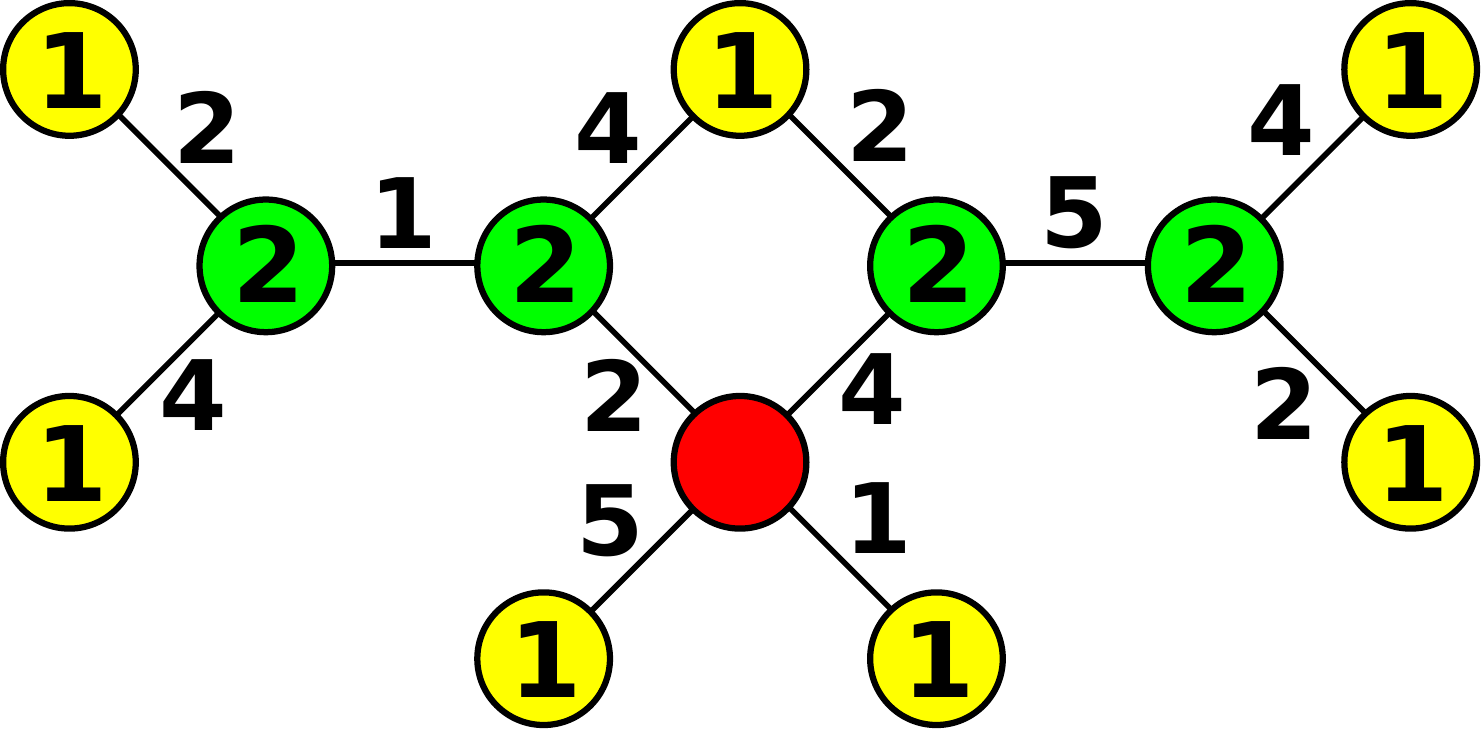}
\caption{All possible toric mutation sequences that start from model 1 and return to model 1 the first time. The red circle represents the initial quiver $Q$. Numbers on edges represent the vertices mutated.}
\label{fig:model1-1}
\end{figure}

\begin{proposition} [Relations of $\rho$-mutations]
\label{prop:rho}
\begin{align*}
&\rho_4\{x_1,x_2,x_3,x_4,x_5\}=\rho_1^2\rho_3\{x_1,x_2,x_3,x_4,x_5\},\\
&\rho_5\{x_1,x_2,x_3,x_4,x_5\}=\rho_2^2\rho_3\{x_1,x_2,x_3,x_4,x_5\},\\
&\rho_6\{x_1,x_2,x_3,x_4,x_5\}=\rho_1^2\{x_1,x_2,x_3,x_4,x_5\},\\
&\rho_7\{x_1,x_2,x_3,x_4,x_5\}=\rho_2^2\{x_1,x_2,x_3,x_4,x_5\}.
\end{align*}
$$\rho_1\rho_2\{x_1,\ldots,x_5\} = \rho_2\rho_1\{x_1,\ldots,x_5\} = \rho_3^2\{x_1,\ldots,x_5\} = \{x_1,x_2,x_3,x_4,x_5\},$$
$$\rho_1^2\rho_3\{x_1,\ldots,x_5\} = \rho_3\rho_1^2\{x_1,\ldots,x_5\},\quad\rho_2^2\rho_3\{x_1,\ldots,x_5\} = \rho_3\rho_2^2\{x_1,\ldots,x_5\},$$
$$\rho_1\rho_3\rho_2\{x_1,\ldots,x_5\} = \rho_2\rho_3\rho_1\{x_1,\ldots,x_5\}.$$
\end{proposition}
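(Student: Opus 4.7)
My plan is to verify all the identities by direct computation on the initial cluster $\{x_1, x_2, x_3, x_4, x_5\}$. Since each $\rho_i$ fixes the quiver $Q$, compositions $\rho_i \rho_j$ are well-defined as transformations on clusters seated at $Q$, and each side of each claimed equality ultimately produces a 5-tuple of Laurent polynomials in $x_1, \ldots, x_5$. The task then reduces to verifying these 5-tuples agree, which can be done using the binomial exchange relation at each step.

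The first step is to compute the action of each of $\rho_1, \rho_2, \rho_3$ on the initial cluster, tabulated in advance and reused. For each component mutation $\mu_i$ within a $\rho$-mutation, I would apply the exchange relation using the edges at vertex $i$ in the current (intermediate) quiver, which can be tracked via the mutation rules of Definition~2.2 or read off from Figure~\ref{fig:model1-1}. After the component mutations, the final permutation relabels the resulting cluster. The output is an explicit formula for each $\rho_i\{x_1, \ldots, x_5\}$.

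With these building blocks in hand, the identities split into two groups. For the inverse-type identities $\rho_1\rho_2\{x_1,\ldots,x_5\} = \rho_2\rho_1\{x_1,\ldots,x_5\} = \rho_3^2\{x_1,\ldots,x_5\} = \{x_1, \ldots, x_5\}$, I would apply a second $\rho$-mutation to the previously computed image and check that we recover the initial cluster; these are structurally the cleanest because $\mu_i \circ \mu_i = \mathrm{id}$ on clusters, so the verification reduces to checking that the permutations compose correctly. For the expansion identities $\rho_4 = \rho_1^2 \rho_3$, $\rho_5 = \rho_2^2 \rho_3$, $\rho_6 = \rho_1^2$, and $\rho_7 = \rho_2^2$ (all evaluated on $\{x_1,\ldots,x_5\}$), I would compute both sides using the tabulated $\rho_i$, substituting the image of $\rho_3$ into the computation of $\rho_1^2$ (and analogously for the others), and compare the resulting Laurent polynomials entry by entry. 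The commutativity relations $\rho_1^2 \rho_3 = \rho_3 \rho_1^2$, $\rho_2^2 \rho_3 = \rho_3 \rho_2^2$, and $\rho_1 \rho_3 \rho_2 = \rho_2 \rho_3 \rho_1$ can then be deduced either by a symmetric direct computation or, in some cases, by chaining previously verified relations (e.g.\ combining $\rho_4 = \rho_1^2 \rho_3$ with a computation of $\rho_3 \rho_1^2$ to conclude $\rho_1^2\rho_3 = \rho_3 \rho_1^2$).

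The main obstacle is bookkeeping: compositions of three or four mutations produce sizable Laurent polynomials, and simplification requires canceling the denominator $x_i$ in each new cluster variable against matching numerators in subsequent steps. To keep the argument manageable, I would organize the proof around a single table of the $\rho_i\{x_1,\ldots,x_5\}$ and treat each identity as a short substitution exercise rather than redoing the mutation chain from scratch. No deep structural insight is needed; the content of the proposition is a set of algebraic coincidences among Laurent polynomials, and the difficulty is purely one of careful computation.
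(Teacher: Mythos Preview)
Your approach is correct and matches the paper's treatment: the paper states Proposition~\ref{prop:rho} without proof, implicitly relying on exactly the kind of direct computational verification you describe. There is nothing more to add.
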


Note that it suffices to define $\rho_1, \rho_2, \rho_3$ because $\rho_4$, $\rho_5$, $\rho_6$, $\rho_7$ can be written in terms of the previous three. 

\begin{theorem}\label{thm:rho}
Any toric mutation sequence in dP$_2$ quiver that starts and ends at model 1 can be written, up to a permutation of cluster variables, as
$\rho_t^k(\rho_3\rho_1)^m\rho_3^{w}$, where $k,m\in\Z_{\geq0}$, $t\in\{1,2\}$ and $w\in\{0,1\}.$
\end{theorem}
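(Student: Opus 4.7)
The plan is to argue in three stages. First, I would decompose any toric mutation sequence from model~1 back to model~1 as a word in $\rho_1,\ldots,\rho_7$; then eliminate $\rho_4,\rho_5,\rho_6,\rho_7$ in favor of $\rho_1,\rho_2,\rho_3$ using the first four identities of Proposition~\ref{prop:rho}; and finally rewrite the resulting word in $\rho_1,\rho_2,\rho_3$ into the claimed normal form using the remaining relations. The main obstacle will be the third stage, which requires a rewriting procedure with a carefully chosen monovariant to guarantee termination.

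For the first stage, Figure~\ref{fig:model1-1} enumerates every toric sequence that leaves model~1 and first returns to model~1, and by the construction of the $\rho_i$'s these first-return segments are exactly $\rho_1,\ldots,\rho_7$, up to a permutation of the vertex labels. Since every single mutation changes the model, any longer toric sequence from model~1 to model~1 passes through model~1 only finitely many times and cuts into a concatenation of such first-return pieces; hence the full sequence factors as a word in $\rho_1,\ldots,\rho_7$.

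For the second stage, the first four identities of Proposition~\ref{prop:rho} give $\rho_4 = \rho_1^2\rho_3$, $\rho_5 = \rho_2^2\rho_3$, $\rho_6 = \rho_1^2$, and $\rho_7 = \rho_2^2$, so substituting these yields a word in $\rho_1,\rho_2,\rho_3$ alone.

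For the third stage, I would use the surviving relations $\rho_1\rho_2 = \rho_2\rho_1 = 1$, $\rho_3^2 = 1$, and $\rho_1^2\rho_3 = \rho_3\rho_1^2$ (the identity $\rho_1\rho_3\rho_2 = \rho_2\rho_3\rho_1$ follows from these). After collapsing $\rho_1\rho_2$ pairs, the word takes the shape $\rho_1^{j_0}\rho_3\rho_1^{j_1}\rho_3\cdots\rho_3\rho_1^{j_n}$ with $j_i\in\Z$. For each internal $j_i$, write $j_i = 2q_i + r_i$ with $r_i\in\{0,1\}$ and use $\rho_1^2\rho_3 = \rho_3\rho_1^2$ to push the even part $\rho_1^{2q_i}$ all the way to the left, accumulating it on the leftmost exponent. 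Whenever an internal slot becomes $\rho_1^0$, the flanking $\rho_3\rho_3$ cancels by $\rho_3^2 = 1$; after recollapsing the resulting $\rho_1$-string we iterate. The pair (number of $\rho_3$'s in the word, total word length), ordered lexicographically, is a strict monovariant under these operations, so this process terminates. Finally, apply the same even-parity split to the rightmost exponent $j_n$, pushing its even part through the last $\rho_3$ and leaving a residue of $\rho_1$ or nothing on the right. The outcome has the form $\rho_1^{j''}(\rho_3\rho_1)^m\rho_3^{w'}$ with $m\geq 0$ and $w'\in\{0,1\}$, and setting $t=1,\,k=j''$ if $j''\geq 0$ and $t=2,\,k=-j''$ otherwise gives the desired expression. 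The delicate point is that each cancellation of $\rho_3\rho_3$ can merge neighboring $\rho_1$-stacks and recreate an even exponent that must be re-propagated, so Stage~3 must be an interleaved loop rather than a single pass; the monovariant above is what ensures termination.
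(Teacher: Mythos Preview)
Your argument is correct and follows the same strategy as the paper: reduce to a word in $\rho_1,\rho_2,\rho_3$, then exploit the centrality of $\rho_1^{\pm 2}$ together with $\rho_3^2=1$ to reach the normal form. One minor slip: the claim that ``every single mutation changes the model'' is false, since $\mu_1$ and $\mu_5$ send model~1 to itself (these are exactly $\rho_1$ and $\rho_2$); this is harmless, as the decomposition into first-return pieces holds for any finite sequence regardless.

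Your Stage~3 is organized slightly differently from the paper's. By setting $\rho_2=\rho_1^{-1}$ at the outset and choosing residues in $\{0,1\}$, you bypass the paper's separate sorting step (which uses $\rho_1\rho_3\rho_2=\rho_2\rho_3\rho_1$ to move all $\rho_1$'s ahead of all $\rho_2$'s) and its final $\rho_1^M\rho_2^M$ trick to absorb the trailing $(\rho_3\rho_2)^s$ block. The tradeoff is that your version requires an iterative loop with an explicit termination argument, whereas the paper's rearrangement is a single pass. Both are valid; yours is arguably more direct, while the paper's avoids the bookkeeping of re-merging exponents after each $\rho_3\rho_3$ cancellation.
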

\begin{proof}
This theorem is essentially saying that all $\rho$-mutation sequences can be written in a certain form. Fix a generic $\rho$-mutation sequence. 

Since $\rho_1\rho_2=\rho_2\rho_1=\rho_3^2=1$, we can assume that this sequence does not contain consecutive $\rho_3$'s and does not contain adjacent $\rho_1$ and $\rho_2$. Therefore, we can write it as $\rho_{j_1}^{\alpha_1}\rho_3\rho_{j_2}^{\alpha_2}\rho_3\cdots\rho_{j_N}^{\alpha_N}$ with possibly a $\rho_3$ at the beginning and a $\rho_3$ at the end, where $j_i\in\{1,2\}$ and $\alpha_i\in\Z_{>0}.$ 

Notice that by Proposition \ref{prop:rho}, $\rho_1^2$ and $\rho_2^2$ commute with everything. So whenever we see two consecutive $\rho_1$'s or consecutive $\rho_2$'s, we can pull them to the front. As a result, we can further simplify this sequence as $\rho_t^n\rho_3\rho_{\ell_1}\rho_3\rho_{\ell_2}\cdots\rho_{\ell_s}$ with possibly a $\rho_3$ at the end, where $t,\ell_1,\ldots,\ell_s\in\{1,2\}$ and $n\in\Z_{\geq0}$. 

Proposition \ref{prop:rho} gives $\rho_1\rho_3\rho_2=\rho_2\rho_3\rho_1$, which means $\rho_1$ and $\rho_2$ ``commute'' with a $\rho_3$ in between. Therefore, in $\rho_3\rho_{\ell_1}\rho_3\rho_{\ell_2}\cdots\rho_{\ell_s}$ (with possibly a $\rho_3$ in the end), we are able to put all $\rho_1$'s in front of $\rho_2$'s. The sequence now has the form $\rho_t^n(\rho_3\rho_1)^r(\rho_3\rho_2)^s$, with possibly a $\rho_3$ in the end. 

Take a sufficiently large $M$ and write the sequence as $\rho_t^n\rho_2^M\rho_1^M(\rho_3\rho_1)^r(\rho_3\rho_2)^s$. Since $\rho_1^2$ commute with everything, we take $\rho_1^2$ in the term $\rho_1^M$ to cancel all the $\rho_2$'s in $(\rho_3\rho_2)^s$, since $M$ is sufficiently large. Finally, we naturally merge the remaining $\rho_1$'s in the previous $\rho_1^M$ with $\rho_t^n$, $\rho_2^M$ and get $\rho_t^k(\rho_3\rho_1)^m$ with possibly a $\rho_3$ in the end, with $t\in\{1,2\}$ and $m,k\in\Z_{\geq0}$, as desired.
\end{proof}

\begin{remark}
Figure \ref{fig:walk} gives a way to visualize the $\rho$-mutation sequences as an analog of alcove walk discussed in the dP3 case \cite{lai2015beyond}. In Figure \ref{fig:walk}, each vertex corresponds to a cluster with a model 1 quiver. We can arbitrarily select one as the initial cluster. A horizontal step to the right is $\rho_1$; a horizontal step to the left is $\rho_2$; and a vertical step is $\rho_3$. 

\begin{figure}[h!]
\includegraphics[scale=1.0]{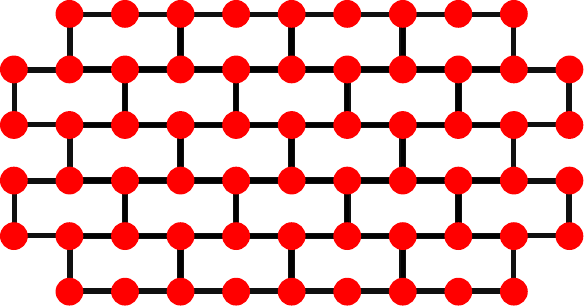}
\caption{Visualization of $\rho$-mutation sequences}
\label{fig:walk}
\end{figure}
\end{remark}

\section{Explicit Formulas for Cluster Variables}
In this section, we give explicit formulas for all cluster variables that can be generated by toric mutations for the dP$_2$ quiver. 

Suppose that the cluster variables are initialized as $\{x_1,x_2,x_3,x_4,x_5\}$.

\begin{definition}[Laurent Polynomial for Somos-5 Sequence]\label{def:x}

\

For $n\geq6$, define recursively
$$x_n:=\frac{x_{n-1}x_{n-4}+x_{n-2}x_{n-3}}{x_{n-5}}.$$

For $n\leq0$, define recursively
$$x_n:=\frac{x_{n+1}x_{n+4}+x_{n+2}x_{n+3}}{x_{n+5}}.$$
\end{definition}

\begin{remark}
For each $n\in\Z$, Definition \ref{def:x} gives us a way to define $x_n$ as a rational function in $x_1,x_2,x_3,x_4,x_5$. Moreover, the equation
\begin{equation}\label{eqn:somos5}
x_{n}x_{n+5}=x_{n+1}x_{n+4}+x_{n+2}x_{n+3}
\end{equation}
is satisfied for each $n\in\Z$. Therefore, it is clear that if we assign 1 to $x_1,\ldots,x_5$, then both $\{x_n\}_{n=1,2,\ldots}$ and $\{x_{6-n}\}_{n=1,2,\ldots}$ give us the Somos-5 sequence.
\end{remark}

\begin{definition}
Define the following constants
$$A:=\frac{x_1x_5+x_3^2}{x_2x_4},\qquad B:=\frac{x_2x_6+x_4^2}{x_3x_5}\Big(=\frac{x_1x_4^2+x_2x_3x_4+x_2^2x_5}{x_1x_3x_5}\ \Big).$$
\end{definition}
\begin{lemma}\label{lem:AB}
For each $n\in\Z$,
$$A=\frac{x_{2n-1}x_{2n+3}+x_{2n+1}^2}{x_{2n}x_{2n+2}},\qquad B=\frac{x_{2n}x_{2n+4}+x_{2n+2}^2}{x_{2n+1}x_{2n+3}}.$$
\end{lemma}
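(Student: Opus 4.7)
The plan is to introduce a single unified quantity
$$C_n := \frac{x_n x_{n+4} + x_{n+2}^2}{x_{n+1} x_{n+3}}$$
and prove $C_n = C_{n+2}$ for every $n \in \Z$. Once this two-step invariance is established, specializing to $n = 2k-1$ recovers the claim about $A$, while specializing to $n = 2k$ recovers the claim about $B$, since $C_1 = A$ and $C_2 = B$ by definition.

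To prove $C_n = C_{n+2}$, I would clear denominators: it suffices to verify
$$x_{n+5}\bigl(x_n x_{n+4} + x_{n+2}^2\bigr) = x_{n+1}\bigl(x_{n+2} x_{n+6} + x_{n+4}^2\bigr).$$
Here the Somos-5 recurrence \eqref{eqn:somos5} is applied twice, once to rewrite $x_n x_{n+5} = x_{n+1} x_{n+4} + x_{n+2} x_{n+3}$ on the left, and once to rewrite $x_{n+1} x_{n+6} = x_{n+2} x_{n+5} + x_{n+3} x_{n+4}$ on the right. After these substitutions, both sides expand to the same trinomial
$$x_{n+1} x_{n+4}^2 + x_{n+2} x_{n+3} x_{n+4} + x_{n+2}^2 x_{n+5},$$
which finishes the identity. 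Note that \eqref{eqn:somos5} has already been asserted to hold for all $n \in \Z$ (including negative indices, via the symmetric backward recursion in Definition~\ref{def:x}), so the computation is valid on the entire integer line.

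With $C_n = C_{n+2}$ in hand, a straightforward induction upward and downward from the base cases $C_1 = A$ and $C_2 = B$ shows that $C_{2k-1} = A$ and $C_{2k} = B$ for every $k \in \Z$, which is precisely the statement of the lemma. The only real work is the algebraic expansion above; no obstacle is anticipated, since the identity reduces to a three-term polynomial equality after two applications of the Somos-5 relation.
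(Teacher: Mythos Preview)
Your proposal is correct and follows essentially the same approach as the paper: both arguments reduce the lemma to the two-step invariance $C_n = C_{n+2}$ of the quantity $C_n = (x_n x_{n+4} + x_{n+2}^2)/(x_{n+1} x_{n+3})$, verified by two applications of the Somos-5 relation. The only cosmetic difference is that the paper manipulates the fractions directly (substituting for $x_{m+6}$ and then recognizing $x_{m+2}x_{m+3}+x_{m+1}x_{m+4}=x_m x_{m+5}$), whereas you clear denominators first and expand both sides symmetrically to the common trinomial $x_{n+1}x_{n+4}^2 + x_{n+2}x_{n+3}x_{n+4} + x_{n+2}^2 x_{n+5}$.
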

\begin{proof}
The lemma is correct when $n=1$ by definition. By an inductive argument, it suffices to show that, for each $m\in\Z$,
$$\frac{x_mx_{m+4}+x_{m+2}^2}{x_{m+1}x_{m+3}}=\frac{x_{m+2}x_{m+6}+x_{m+4}^2}{x_{m+3}x_{m+5}}.$$
According to Equation \eqref{eqn:somos5}, we have
\begin{align*}
\frac{x_{m+2}x_{m+6}+x_{m+4}^2}{x_{m+3}x_{m+5}}=&\frac{x_{m+2}\displaystyle{\frac{x_{m+2}x_{m+5}+x_{m+3}x_{m+4}}{x_{m+1}}}+x_{m+4}^2}{x_{m+3}x_{m+5}}\\
=&\frac{x_{m+2}^2}{x_{m+1}x_{m+3}}+\frac{x_{m+4}(x_{m+2}x_{m+3}+x_{m+1}x_{m+4})}{x_{m+1}x_{m+3}x_{m+5}}\\
=&\frac{x_{m+2}^2}{x_{m+1}x_{m+3}}+\frac{x_{m+4}x_mx_{m+5}}{x_{m+1}x_{m+3}x_{m+5}}\\
=&\frac{x_mx_{m+4}+x_{m+2}^2}{x_{m+1}x_{m+3}}
\end{align*}
as desired.
\end{proof}
\begin{theorem}\label{thm:rhocv}
Define $\rho_1^{k}:=\rho_2^{-k}$ for $k<0$. Define $g(s,k):=\left\lfloor\frac{s}{2}\right\rfloor\left\lfloor\frac{s+1}{2}\right\rfloor$ if $k$ is even and $g(s,k):=\left\lfloor\frac{s-1}{2}\right\rfloor\left\lfloor\frac{s}{2}\right\rfloor$ if $k$ is odd.
Then we have, for $k\in\Z$ and $s\in\Z_{\geq0}$,
\begin{align*}
\rho_1^{k}(\rho_3\rho_1)^s\{x_1,x_2,x_3,x_4,x_5\}=\{A^{g(s+1,k)}B^{g(s+1,k+1)}x_{k+s+1}&,\\
A^{g(s,k)}B^{g(s,k+1)}x_{k+s+2}&,\\
A^{g(s+1,k)}B^{g(s+1,k+1)}x_{k+s+3}&,\\
A^{g(s,k)}B^{g(s,k+1)}x_{k+s+4}&,\\
A^{g(s+1,k)}B^{g(s+1,k+1)}x_{k+s+5}&\}.
\end{align*}
\end{theorem}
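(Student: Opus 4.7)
The plan is to prove the theorem by induction on $s\geq 0$. The base case $s=0$ reduces, since $g(0,k)=g(1,k)=0$ for every $k$, to the claim
$$\rho_1^k\{x_1,\ldots,x_5\}=\{x_{k+1},x_{k+2},x_{k+3},x_{k+4},x_{k+5}\},\qquad k\in\Z.$$
This follows directly from the Somos-5 recursion of Definition~\ref{def:x}: the mutation $\mu_1$ in $\rho_1=\mu_1\circ(54321)$ has binomial exchange $x_2x_5+x_3x_4=x_1x_6$, so it replaces $x_1$ by $x_6$, and the cyclic permutation $(54321)$ then shifts slots appropriately. For $k<0$ one invokes $\rho_1^{-1}=\rho_2$ from Proposition~\ref{prop:rho} and applies the same argument using the backward Somos-5 recursion.

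Assuming the formula at level $s$ holds for all $k\in\Z$, I would derive the formula at level $s+1$ by computing the effect of one additional $\rho_3\rho_1$ on the inductive cluster. The $\rho_1$ factor increments every $x_n$-index by one and leaves the $A,B$ coefficients untouched (by the base-case argument applied to the shifted cluster, using shift-invariance of the Somos-5 recursion). Hence the heart of the argument is $\rho_3=\mu_2\circ\mu_4\circ(24)$, applied to a cluster of the form
$$\{A^{a}B^{b}x_{m},\ A^{c}B^{d}x_{m+1},\ A^{a}B^{b}x_{m+2},\ A^{c}B^{d}x_{m+3},\ A^{a}B^{b}x_{m+4}\}.$$
Since vertices $2$ and $4$ are non-adjacent in $Q$, the mutations $\mu_2$ and $\mu_4$ commute, and each produces a binomial-exchange numerator that, after factoring out the common $A,B$-powers, has the shape
$$x_{2n-1}x_{2n+3}+x_{2n+1}^{2}\quad\text{or}\quad x_{2n}x_{2n+4}+x_{2n+2}^{2}$$
for a suitable $n$. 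By Lemma~\ref{lem:AB}, these collapse to $A\cdot x_{2n}x_{2n+2}$ or $B\cdot x_{2n+1}x_{2n+3}$ respectively, so each mutation outputs a single $A$-$B$-monomial in the Somos-5 variables; the transposition $(24)$ then places the results in the correct slots.

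The main obstacle is the parity-dependent bookkeeping of the $A$- and $B$-exponents: because $g(s,k)$ uses different floor-function formulas according to the parity of $k$, the increments in $g$ from level $s$ to level $s+1$ alternate between two shapes, and which of $A$ or $B$ is produced by each $\mu_i$ depends on the parities of the indices at the mutated slot. I anticipate splitting into four subcases according to the parities of $s$ and $k$ and verifying in each one that the combined Somos-5 and Lemma~\ref{lem:AB} simplifications yield exactly the predicted $g$-values. Once the inductive step is verified at $k=0$, the case of general $k\in\Z$ follows by the same Somos-5 shift used in the base case, relying on $\rho_1^{k}=\rho_2^{-k}$ together with the observation that $g(s,k)=g(s,k+2)$, so the formula is $2$-periodic in $k$ up to a swap of $A$ and $B$.
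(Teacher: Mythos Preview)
Your plan is correct and matches the paper's proof: induct on $s$, handle $s=0$ via the Somos-5 recursion to get $\rho_1^k\{x_1,\dots,x_5\}=\{x_{k+1},\dots,x_{k+5}\}$, and for the inductive step apply $\rho_3$ then $\rho_1$, using Lemma~\ref{lem:AB} to collapse the $\mu_2,\mu_4$ exchanges and the Somos-5 identity for the $\mu_1$ exchange, with a parity split (the paper splits on the parity of $k+s$, equivalent to your four subcases).

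One point deserves more care than your sketch gives it. You write that after $\rho_3$ the $\rho_1$ factor ``increments every $x_n$-index by one and leaves the $A,B$ coefficients untouched (by the base-case argument applied to the shifted cluster).'' This is not quite right: after $\rho_3$ the cluster is not of the form $\{x_m,\dots,x_{m+4}\}$ but rather has \emph{alternating} $A,B$-exponents, so the base-case argument does not apply verbatim. What actually happens is that in the $\mu_1$ exchange $\frac{S'[2]S'[5]+S'[3]S'[4]}{S'[1]}$ both summands carry the \emph{same} $A,B$-monomial (because positions $2,4$ share one exponent pair and positions $3,5$ share the other), so that monomial factors out and the Somos-5 identity handles the remaining $x$-part; the new entry then inherits the exponents $A^{g(s+1,k)}B^{g(s+1,k+1)}$ rather than those of the old position $1$. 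This is exactly what the paper checks explicitly, and it is the small computation you should not skip. Your closing remark about reducing to $k=0$ and invoking $2$-periodicity in $k$ is unnecessary once you carry the general $k$ through the induction as above.
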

\begin{proof}
We divide our toric mutation sequence into two steps: $\rho_1^{k}$ and $(\rho_3\rho_1)^{s}$. Then we use straightforward induction.

\

\noindent\textbf{Step 1:} $\rho_1^{k}\{x_1,x_2,x_3,x_4,x_5\}=\{x_{k+1},x_{k+2},x_{k+3},x_{k+4},x_{k+5}\}$ for $k\in\Z$.

This is true for $k=0$. Let us suppose that this holds for some $k\geq0$. Then 
\begin{align*}
\rho_1^{k+1}\{x_1,x_2,x_3,x_4,x_5\}=&\rho_1\{x_{k+1},x_{k+2},x_{k+3},x_{k+4},x_{k+5}\}\\
=&(54321)\big(\mu_1\{x_{k+1},x_{k+2},x_{k+3},x_{k+4},x_{k+5}\}\big)\\
=&(54321)\{\frac{x_{k+2}x_{k+5}+x_{k+3}x_{k+4}}{x_{k+1}},x_{k+2},x_{k+3},x_{k+4},x_{k+5}\}\\
=&(54321)\{x_{k+6},x_{k+2},x_{k+3},x_{k+4},x_{k+5}\}\\
=&\{x_{k+2},x_{k+3},x_{k+4},x_{k+5},x_{k+6}\}.
\end{align*}
By induction, this proves the claim for $k\geq0$. The proof for $k\leq0$ can be done the same way.

\

Before doing the next step, we first show that if $k+s$ is odd, then
\begin{equation}
\label{eqn:ksodd}
\begin{split}
g(s+1,k)=&2g(s,k)-g(s-1,k)+1\\
g(s+1,k+1)=&2g(s,k+1)-g(s-1,k+1).
\end{split}
\end{equation}

If $k$ is even, then $s$ is odd and these two equations become
\begin{align*}
&\left\lfloor\frac{s+1}{2}\right\rfloor\left\lfloor\frac{s+2}{2}\right\rfloor=2\left\lfloor\frac{s}{2}\right\rfloor\left\lfloor\frac{s+1}{2}\right\rfloor-\left\lfloor\frac{s-1}{2}\right\rfloor\left\lfloor\frac{s}{2}\right\rfloor+1\\
&\Leftrightarrow\Big(\frac{s+1}{2}\Big)^2=2\Big(\frac{s-1}{2}\Big)\Big(\frac{s+1}{2}\Big)-\Big(\frac{s-1}{2}\Big)^2+1
\end{align*}
and
\begin{align*}
&\left\lfloor\frac{s}{2}\right\rfloor\left\lfloor\frac{s+1}{2}\right\rfloor=2\left\lfloor\frac{s-1}{2}\right\rfloor\left\lfloor\frac{s}{2}\right\rfloor-\left\lfloor\frac{s-2}{2}\right\rfloor\left\lfloor\frac{s-1}{2}\right\rfloor\\
&\Leftrightarrow\Big(\frac{s-1}{2}\Big)\Big(\frac{s+1}{2}\Big)=2\Big(\frac{s-1}{2}\Big)^2-\Big(\frac{s-3}{2}\Big)\Big(\frac{s-1}{2}\Big)
\end{align*}
which are clearly correct.

If $k$ is odd, then $s$ is even and these two equations become
\begin{align*}
&\left\lfloor\frac{s}{2}\right\rfloor\left\lfloor\frac{s+1}{2}\right\rfloor=2\left\lfloor\frac{s-1}{2}\right\rfloor\left\lfloor\frac{s}{2}\right\rfloor-\left\lfloor\frac{s-1}{2}\right\rfloor\left\lfloor\frac{s-2}{2}\right\rfloor+1\\
&\Leftrightarrow\Big(\frac{s}{2}\Big)^2=2\Big(\frac{s}{2}\Big)\Big(\frac{s-2}{2}\Big)-\Big(\frac{s-2}{2}\Big)^2+1
\end{align*}
and
\begin{align*}
&\left\lfloor\frac{s+1}{2}\right\rfloor\left\lfloor\frac{s+2}{2}\right\rfloor=2\left\lfloor\frac{s}{2}\right\rfloor\left\lfloor\frac{s+1}{2}\right\rfloor-\left\lfloor\frac{s-1}{2}\right\rfloor\left\lfloor\frac{s}{2}\right\rfloor\\
&\Leftrightarrow\Big(\frac{s}{2}\Big)\Big(\frac{s+2}{2}\Big)=2\Big(\frac{s}{2}\Big)^2-\Big(\frac{s-2}{2}\Big)\Big(\frac{s}{2}\Big)
\end{align*}
which are clearly correct.

With the same argument, we can show that if $k+s$ is even, then
\begin{equation}
\label{eqn:kseven}
\begin{split}
g(s+1,k)=&2g(s,k)-g(s-1,k)\\
g(s+1,k+1)=&2g(s,k+1)-g(s-1,k+1)+1.
\end{split}
\end{equation}

\noindent\textbf{Step 2:} Calculate $\rho_1^k(\rho_3\rho_1)^s\{x_1,x_2,x_3,x_4,x_5\}.$

From step 1, $\rho_1^k(\rho_3\rho_1)^s\{x_1,x_2,x_3,x_4,x_5\}=(\rho_3\rho_1)^s\{x_{k+1},x_{k+2},x_{k+3},x_{k+4},x_{k+5}\}.$

Since $g(0,k)=g(1,k)=0$, no matter the parity of $k$, when $s=0$, the theorem holds. Now assume that the theorem holds for some $s-1\geq0$. It suffices to show
\begin{align*}
(\rho_3\rho_1)\{&A^{g(s,k)}B^{g(s,k+1)}x_{k+s},A^{g(s-1,k)}B^{g(s-1,k+1)}x_{k+s+1},A^{g(s,k)}B^{g(s,k+1)}x_{k+s+2},\\
&A^{g(s-1,k)}B^{g(s-1,k+1)}x_{k+s+3},A^{g(s,k)}B^{g(s,k+1)}x_{k+s+4}\}\\
=\{&A^{g(s+1,k)}B^{g(s+1,k+1)}x_{k+s+1},A^{g(s,k)}B^{g(s,k+1)}x_{k+s+2},A^{g(s+1,k)}B^{g(s+1,k+1)}x_{k+s+3},\\
&A^{g(s,k)}B^{g(s,k+1)}x_{k+s+4},A^{g(s+1,k)}B^{g(s+1,k+1)}x_{k+s+5}\}.
\end{align*}
Denote $\rho_1^k(\rho_3\rho_1)^{s-1}\{x_1,\ldots,x_5\}$ as $S$, and let $S[i]$ be the $i^{\text{th}}$ element of $S$. Recall that to apply $\rho_3\rho_1$ to $S$, we first do $\rho_3=\mu_2\circ\mu_4\circ(24)$. As we mutate vertex 2, the new cluster variable at vertex 2 is updated as
\begin{align*}
S[2]\leftarrow&\frac{S[1]S[5]+S[3]^2}{S[2]}\\
=&\frac{A^{2g(s,k)}B^{2g(s,k+1)}(x_{k+s}x_{k+s+4}+x_{k+s+2}^2)}{A^{g(s-1,k)}B^{g(s-1,k+1)}x_{k+s+1}}
\end{align*}
According to Lemma \ref{lem:AB} and Equation \eqref{eqn:ksodd}, if $k+s$ is odd, the above expression becomes
$$\frac{A^{2g(s,k)}B^{2g(s,k+1)}Ax_{k+s+3}}{A^{g(s-1,k)}B^{g(s-1,k+1)}}=A^{g(s+1,k)}B^{g(s+1,k+1)}x_{k+s+3}.$$
Similarly by Lemma \ref{lem:AB} and Equation \eqref{eqn:kseven}, if $k+s$ is even, we have
$$\frac{A^{2g(s,k)}B^{2g(s,k+1)}Bx_{k+s+3}}{A^{g(s-1,k)}B^{g(s-1,k+1)}}=A^{g(s+1,k)}B^{g(s+1,k+1)}x_{k+s+3}.$$

Then as we mutate vertex 4, with the same argument, we can show that
$$S[4]\leftarrow A^{g(s+1,k)}B^{g(s+1,k+1)}x_{k+s+1}.$$

So if we let $S'=\rho_3 S$, then $S'$ and $S$ differ only in the $2^{\text{nd}}$ and the $4^{\text{th}}$ coordinate. Specifically, $$S'[2]=A^{g(s+1,k)}B^{g(s+1,k+1)}x_{k+s+1},\quad S'[4]=A^{g(s+1,k)}B^{g(s+1,k+1)}x_{k+s+3}.$$

Finally, we mutate at vertex 1 in $S'$ and get
\begin{align*}
S'[1]\leftarrow&\frac{S'[2]S'[5]+S'[3]S'[4]}{S'[1]}\\
=&\frac{A^{g(s+1,k)+g(s,k)}B^{g(s+1,k+1)+g(s,k+1)}(x_{k+s+1}x_{k+s+4}+x_{k+s+2}x_{k+s+3})}{A^{g(s,k)}B^{g(s,k+1)}x_{k+s}}\\
=&A^{g(s+1,k)}B^{g(s+1,k+1)}x_{k+s+5}.
\end{align*}

After applying a permutation $(54321)$, we obtain the desired identity, which completes the induction step.
\end{proof}

\begin{corollary}\label{cor:rhocv}
All cluster variables that may appear through toric mutation sequences can be written in the forms
$$A^{n^2}B^{n(n-1)}x_{2m},\quad A^{n(n-1)}B^{n^2}x_{2m-1}\qquad\text{where }m,n\in\Z.$$
\end{corollary}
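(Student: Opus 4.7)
The plan is to combine Theorem~\ref{thm:rho} with Theorem~\ref{thm:rhocv} and then perform a parity-based case check. By Theorem~\ref{thm:rho} together with the convention $\rho_1^{-k}:=\rho_2^{k}$, every toric mutation sequence from model 1 to model 1 can be written as $\rho_1^k(\rho_3\rho_1)^s$ possibly followed by a single trailing $\rho_3$, for some $k\in\Z$ and $s\in\Z_{\geq 0}$. A cluster variable arising at any intermediate moment is a position value of $\rho_1^k(\rho_3\rho_1)^{s'}\{x_1,\ldots,x_5\}$ for some $s'\in\{s,s+1\}$: a single $\mu_2$ or $\mu_4$ applied to such a cluster produces a new entry which coincides, via the recurrences~\eqref{eqn:ksodd} and~\eqref{eqn:kseven} used in the proof of Theorem~\ref{thm:rhocv}, with one of the entries for the parameter $(k,s+1)$. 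Hence it suffices to show that the five entries listed in Theorem~\ref{thm:rhocv} can always be put in one of the two forms stated in the corollary.

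From Theorem~\ref{thm:rhocv}, each such entry is either $A^{g(s+1,k)}B^{g(s+1,k+1)}x_{k+s+j}$ for $j\in\{1,3,5\}$ or $A^{g(s,k)}B^{g(s,k+1)}x_{k+s+j}$ for $j\in\{2,4\}$. The next step is to evaluate $g$ directly from its definition in each of the four parity cases for $(k\bmod 2,\, s\bmod 2)$, using $g(s,k)=\lfloor s/2\rfloor\lfloor (s+1)/2\rfloor$ when $k$ is even and $g(s,k)=\lfloor (s-1)/2\rfloor\lfloor s/2\rfloor$ when $k$ is odd. A brief computation shows that in each case the exponent pair collapses to one of $(n^2,\,n(n-1))$, $(n(n-1),\,n^2)$, $(n^2,\,n(n+1))$, or $(n(n+1),\,n^2)$ for an integer $n$ determined by $\lfloor s/2\rfloor$ or $\lfloor (s+1)/2\rfloor$. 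The key algebraic identity
\[
n(n+1)\;=\;(-n)\bigl((-n)-1\bigr)
\]
then converts the last two shapes into the first two via the substitution $n\mapsto -n$.

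Finally, one verifies that the parity of the subscript $k+s+j$ on $x$ matches the assignment in the corollary in every case: the exponent pair $(n^2,n(n-1))$ always accompanies an even subscript and the pair $(n(n-1),n^2)$ always accompanies an odd subscript, yielding exactly the two forms $A^{n^2}B^{n(n-1)}x_{2m}$ and $A^{n(n-1)}B^{n^2}x_{2m-1}$. The main obstacle is purely clerical, namely carefully tracking the eight subcases (four parities of $(k,s)$ times odd-versus-even position $j$) alongside the $n\mapsto -n$ substitution; no deeper ideas are required beyond the explicit formula for $g$ and this sign identity.
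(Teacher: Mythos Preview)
Your proposal is correct and follows essentially the same approach as the paper: reduce to the clusters $\rho_1^k(\rho_3\rho_1)^s$ via Theorem~\ref{thm:rho}, read off the exponents from Theorem~\ref{thm:rhocv}, split into parity cases, and merge using $n(n+1)=(-n)((-n)-1)$. The only organizational difference is that the paper first observes that every entry in Theorem~\ref{thm:rhocv} can be rewritten uniformly as $A^{g(s,k)}B^{g(s,k+1)}x_{k+s}$ for some $k\in\Z$, $s\in\Z_{\geq 0}$ (since $g(s,k)$ depends only on the parity of $k$), which collapses your eight subcases to four; your handling of the model-2 endings via a single $\mu_2$ or $\mu_4$ matches the paper's argument.
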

\begin{proof}
We first explain that all cluster variables that appear from toric mutations can be achieved by $\rho$-mutation sequences in the form of $\rho_1^k(\rho_3\rho_1)^s$, for some $k\in\Z$ and $s\in\Z{\geq0}$. According to Theorem \ref{thm:rho}, every toric mutation sequence from model 1 to model 1 can be written as $\rho_1^k(\rho_3\rho_1)^s$ or $\rho_1^{k}(\rho_3\rho_1)^s\rho_3$ for some $k\in\Z$ and $s\in\Z_{\geq0}$. The proof for Theorem \ref{thm:rhocv} shows that cluster variables of $\rho_1^{k}(\rho_3\rho_1)^s\rho_3\{x_1,\ldots,x_5\}$ are included in $\rho_1^{k}(\rho_3\rho_1)^s\{x_1,\ldots,x_5\}$ and $\rho_1^{k}(\rho_3\rho_1)^{s+1}\{x_1,\ldots,x_5\}$. Now we consider any toric mutation sequence that takes the original model 1 quiver to some model 2 quiver. According to Figure \ref{fig:model1-1}, this model 2 quiver can reach two different model 1 quivers in one step of toric mutation. So the cluster variables corresponding to this specific toric mutation sequence that ends on a model 2 quiver are included in the cluster variables that are generated by these two model 1 quivers.

Then we can take a closer look at the cluster variables shown in Theorem \ref{thm:rhocv}. Since $g(s,k)$ depends on the value of $s$ and the parity, but not the actual value, of $k$, it is easy to see that all cluster variables that appear can be written as $A^{g(s,k)}B^{g(s,k+1)}x_{k+s}$ for some $k\in\Z$ and $s\in\Z_{\geq0}$. Conversely, for any $k\in\Z$ and $s\in\Z_{\geq0}$, $A^{g(s,k)}B^{g(s,k+1)}x_{k+s}$ can be generated by a toric mutation sequence according to Theorem \ref{thm:rhocv}. To look at this term closely, we consider the following four cases according to the parity of $s$ and $k$.

\textbf{Case 1:} $s$ is even and $k$ is even. Let $s=2n$ and $k+s=2m$. We have $n\geq0$. Then
$$A^{g(s,k)}B^{g(s,k+1)}x_{k+s}=A^{\left\lfloor\frac{s}{2}\right\rfloor\left\lfloor\frac{s+1}{2}\right\rfloor}B^{\left\lfloor\frac{s-1}{2}\right\rfloor\left\lfloor\frac{s}{2}\right\rfloor}x_{2m}=A^{n^2}B^{n(n-1)}x_{2m}.$$

\textbf{Case 2:} $s$ is odd and $k$ is odd. Let $s=2n+1$ and $k+s=2m$. We have $n\geq0$. Then
$$A^{g(s,k)}B^{g(s,k+1)}x_{k+s}=A^{\left\lfloor\frac{s-1}{2}\right\rfloor\left\lfloor\frac{s}{2}\right\rfloor}B^{\left\lfloor\frac{s}{2}\right\rfloor\left\lfloor\frac{s+1}{2}\right\rfloor}x_{2m}=A^{n^2}B^{n(n+1)}x_{2m}.$$

\textbf{Case 3:} $s$ is even and $k$ is odd. Let $s=2n$ and $k+s=2m-1$. We have $n\geq0$. Then
$$A^{g(s,k)}B^{g(s,k+1)}x_{k+s}=A^{\left\lfloor\frac{s-1}{2}\right\rfloor\left\lfloor\frac{s}{2}\right\rfloor}B^{\left\lfloor\frac{s}{2}\right\rfloor\left\lfloor\frac{s+1}{2}\right\rfloor}x_{2m-1}=A^{n(n-1)}B^{n^2}x_{2m-1}.$$

\textbf{Case 4:} $s$ is odd and $k$ is even. let $s=2n+1$ and $k+s=2m-1$. We have $n\geq0$. Then
$$A^{g(s,k)}B^{g(s,k+1)}x_{k+s}=A^{\left\lfloor\frac{s}{2}\right\rfloor\left\lfloor\frac{s+1}{2}\right\rfloor}B^{\left\lfloor\frac{s-1}{2}\right\rfloor\left\lfloor\frac{s}{2}\right\rfloor}x_{2m-1}=A^{n(n+1)}B^{n^2}x_{2m-1}.$$

Cases 1 and 2 can be merged by letting $n\in\Z$ instead of just $n\in\Z_{\geq0}$. Similarly cases 3 and 4 can be merged. Finally, we conclude that all cluster variables generated by toric mutations can be written as either
$$A^{n^2}B^{n(n-1)}x_{2m},\quad A^{n(n-1)}B^{n^2}x_{2m-1}\qquad\text{where }m,n\in\Z.$$
\end{proof}

\section{Subgraphs of the Brane Tiling}\label{sec:weight}
For our purpose, every graph we consider is a subgraph of the dP$_2$ brane tiling so it is bipartite, planar and weighted. For such a graph $G$, which is bipartite, let $V_1$ and $V_2$ be its corresponding vertex sets. For any vertex set $V_0\subset V_1\cup V_2$, define $G-V_0$ to be the graph obtained by removing each vertex in $V_0$, as well as the edges that are incident to it, from $G$. These notations will be used for the rest of the paper.

We want to find a subgraph for each cluster variable that appears through toric mutations, such that the subgraph's weight equals the cluster variable. We use the weighting scheme utilized in \cite{lai2015beyond}, \cite{leoni2014aztec}, \cite{speyer2007perfect}, \cite{zhangcluster}, and elsewhere. 
\begin{definition}[Weight of Subgraphs]\label{def:weighting}
We associate a weight $\frac{1}{x_ix_j}$ to each edge bordering block labeled $i$ and $j$. For a set of edges $M$, define its weight $w(M)$ to be the product of the weights of the edges. For a subgraph $G$ of the brane tiling, let $\mathcal{M}(G)$ be the collection of its perfect matchings where each perfect matching is represented as a set of edges. Then, we define the weight of $G$ as
$$w(G)=\sum_{M\in\mathcal{M}(G)}w(M).$$
\end{definition}

In order to get recursive relations on the variables which correspond to subgraphs, we need lemmas that help us represent the weight of a large graph in terms of the weights of smaller graphs. Below we state Kuo's condensation theorems \cite{kuo2006graphical}, \cite{kuo2004applications}. 

\begin{lemma}[Balanced Kuo Condensation; Theorem 5.1 in \cite{kuo2004applications}]
\label{lem:KuoBal}
Let $G$ be a weighted planar bipartite graph discussed above with $|V_1|=|V_2|$. Assume that $p_1,p_2,p_3,p_4$ are four vertices appearing in a cyclic order on a face of $G$ with $p_1,p_3\in V_1$ and $p_2,p_4\in V_2$. Then
\begin{align*}
w(G)w(G-\{p_1,p_2,p_3,p_4\})=&w(G-\{p_1,p_2\})w(G-\{p_3,p_4\})\\
&+w(G-\{p_1,p_4\})w(G-\{p_2,p_3\}).
\end{align*}
\end{lemma}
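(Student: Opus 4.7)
The plan is to prove Lemma~\ref{lem:KuoBal} by constructing a weight-preserving bijection between superpositions of perfect matchings, using the combinatorics of alternating paths together with the bipartite and planar structure of $G$. Writing $G' := G - \{p_1, p_2, p_3, p_4\}$, and interpreting $w(G)w(G')$ as the weighted count of pairs $(M,N) \in \mathcal{M}(G) \times \mathcal{M}(G')$ (and similarly for the right-hand side), the identity reduces to the statement that there is a weight-preserving bijection from $\mathcal{M}(G) \times \mathcal{M}(G')$ onto the disjoint union $\bigl(\mathcal{M}(G-\{p_1,p_2\}) \times \mathcal{M}(G-\{p_3,p_4\})\bigr) \sqcup \bigl(\mathcal{M}(G-\{p_1,p_4\}) \times \mathcal{M}(G-\{p_2,p_3\})\bigr)$.

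First I would analyze the superposition $M \triangle N$ for a pair $(M, N)$ on the left. Since $M$ and $N$ are matchings, every vertex has degree at most two in $M \triangle N$, so it decomposes into vertex-disjoint alternating cycles and paths; the odd-degree (degree-one) vertices are precisely those matched in one matching but not the other, which here are exactly $p_1, p_2, p_3, p_4$. Thus the paths of $M \triangle N$ partition these four points into two pairs. At each endpoint $p_i$, the incident path edge must lie in $M$ (the matching that covers $p_i$), so each path both begins and ends with an $M$-edge; alternation then forces the length to be odd, and bipartiteness forces the endpoints to lie in opposite color classes. The pairing $\{p_1,p_3\}\cup\{p_2,p_4\}$ is therefore ruled out, leaving the two pairings that correspond to the two summands on the right.

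Next I would run the analogous analysis on the right-hand sets. For $(M', N') \in \mathcal{M}(G-\{p_1,p_2\}) \times \mathcal{M}(G-\{p_3,p_4\})$, the same parity bookkeeping admits the two pairings $\{p_1,p_2\}\cup\{p_3,p_4\}$ and $\{p_1,p_3\}\cup\{p_2,p_4\}$. The second requires two vertex-disjoint alternating paths joining opposite corners of a quadrilateral that lies on a single face of $G$; by the Jordan curve theorem applied to the planar embedding (after closing the face with a ghost edge if necessary), any two such paths must share a vertex, contradicting vertex-disjointness. Hence only the first pairing survives, matching the left-hand case with the same pairing; a symmetric argument handles $\mathcal{M}(G-\{p_1,p_4\}) \times \mathcal{M}(G-\{p_2,p_3\})$.

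Finally, I would define the bijection by path toggling: given $(M,N)$ on the left whose paths pair as $\{p_1,p_2\}\cup\{p_3,p_4\}$, let $P$ denote the path from $p_1$ to $p_2$ in $M \triangle N$ and set $M' := M \triangle E(P)$ and $N' := N \triangle E(P)$. A direct check shows $M' \in \mathcal{M}(G-\{p_1,p_2\})$ and $N' \in \mathcal{M}(G-\{p_3,p_4\})$, and the weight is preserved because $M \cup N = M' \cup N'$ as multisets of edges, so the products of edge weights agree. The inverse toggles along the (now unique) $p_1$--$p_2$ path in $M' \triangle N'$, and the construction for the other pairing is symmetric. The hardest step is the planarity argument eliminating the crossing pairing: it is the only place where the hypothesis that $p_1, p_2, p_3, p_4$ lie in cyclic order on a common face of $G$ is actually used, and it must be phrased carefully (via an explicit Jordan curve argument or by contracting the bounding face to a point) to turn the intuitive no-crossing picture into a rigorous proof.
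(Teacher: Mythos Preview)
Your proof is correct and follows the standard bijective argument via superposition of matchings and path toggling, which is essentially the approach in Kuo's original paper~\cite{kuo2004applications}. Note, however, that the present paper does not supply its own proof of this lemma at all: it is stated as a black box with a citation to Kuo, so there is no ``paper's proof'' to compare against beyond the reference itself.
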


\begin{lemma}[Unbalanced Kuo Condensation; Theorem 5.2 in \cite{kuo2004applications}]
\label{lem:KuoUnb}
Let $G$ be a weighted planar bipartite graph discussed above with $|V_1|=|V_2|+1$. Assume that $p_1,p_2,p_3,p_4$ are four vertices appearing in a cyclic order on a face of $G$ with $p_1,p_2,p_3\in V_1$ and $p_4\in V_2$. Then
\begin{align*}
w(G-\{p_2\})w(G-\{p_1,p_3,p_4\})=&w(G-\{p_1\})w(G-\{p_2,p_3,p_4\})\\
&+w(G-\{p_3\})w(G-\{p_1,p_2,p_4\}).
\end{align*}
\end{lemma}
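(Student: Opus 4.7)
The plan is to reproduce Kuo's classical argument by constructing a weight-preserving bijection between pairs of perfect matchings. Expanding Definition~\ref{def:weighting}, the left-hand side is $\sum w(M_1) w(M_2)$ summed over $M_1 \in \mathcal{M}(G-\{p_2\})$ and $M_2 \in \mathcal{M}(G-\{p_1,p_3,p_4\})$, and each summand on the right-hand side has an analogous interpretation. I will construct a bijection from the pairs indexing the LHS to the disjoint union of the two sets of pairs indexing the RHS that preserves the underlying multiset of edges, and therefore the weight.

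Given a pair $(M_1, M_2)$, consider the symmetric difference $M_1 \oplus M_2 \subseteq G$. At every vertex outside $\{p_1,p_2,p_3,p_4\}$ both matchings contribute exactly one edge, so the degree in $M_1 \oplus M_2$ is $0$ or $2$; at each $p_i$ exactly one matching contributes, so the degree is $1$. Hence $M_1 \oplus M_2$ is a disjoint union of alternating cycles together with exactly two alternating paths whose endpoints partition $\{p_1,p_2,p_3,p_4\}$ into two pairs.

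Next I would identify which pairings actually arise. At an endpoint $p_i$ the unique incident edge of $M_1 \oplus M_2$ comes from whichever matching covers $p_i$, namely $M_1$ at $p_1,p_3,p_4$ and $M_2$ at $p_2$. An alternating path in a bipartite graph whose two end-edges come from the same matching has odd length, so its endpoints lie in opposite parts of the bipartition; if the end-edges come from different matchings the path has even length, forcing the endpoints into the same part. Combined with $p_1,p_2,p_3 \in V_1$ and $p_4 \in V_2$, this rules out the diagonal pairing $\{\{p_1,p_3\},\{p_2,p_4\}\}$, leaving only $\{\{p_1,p_2\},\{p_3,p_4\}\}$ and $\{\{p_2,p_3\},\{p_1,p_4\}\}$, both consistent with the cyclic order on the face. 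In either case, toggling $M_1$- and $M_2$-edges along the unique alternating path containing $p_2$ produces a new pair of matchings: in the first pairing a pair in $\mathcal{M}(G-\{p_1\}) \times \mathcal{M}(G-\{p_2,p_3,p_4\})$, and in the second a pair in $\mathcal{M}(G-\{p_3\}) \times \mathcal{M}(G-\{p_1,p_2,p_4\})$, matching the two terms on the right-hand side.

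The main obstacle I anticipate is the careful combinatorial analysis of $M_1 \oplus M_2$: one must rigorously establish the dichotomy of pairings as above, verify that the toggle operation is an involution sending each configuration type to the correct image, and confirm that the cyclic-order hypothesis really forces the alternating paths to be non-crossing in the planar embedding so that the bijection is unambiguous. Once this structural analysis is complete, weight preservation is immediate, since the toggle merely reassigns which matching each edge of the toggled path belongs to, so $w(M_1) w(M_2) = w(M_1') w(M_2')$.
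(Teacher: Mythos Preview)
The paper does not actually prove this lemma: it is quoted verbatim as Theorem~5.2 of Kuo~\cite{kuo2004applications} and then used as a black box in Section~\ref{sec:proof} and the Appendix. So there is no ``paper's own proof'' to compare against.

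Your sketch is precisely Kuo's original superposition argument and is correct. A couple of remarks to sharpen it. In the forward direction (starting from the LHS), your parity analysis already kills the crossing pairing $\{\{p_1,p_3\},\{p_2,p_4\}\}$ without any appeal to planarity, as you observed. But in the inverse direction the situation is asymmetric: for a pair $(M_1',M_2')\in\mathcal{M}(G-\{p_1\})\times\mathcal{M}(G-\{p_2,p_3,p_4\})$, the parity argument only eliminates $\{\{p_1,p_4\},\{p_2,p_3\}\}$, and it is the planarity hypothesis (the four points lie in cyclic order on a face, so the two alternating paths cannot cross) that rules out $\{\{p_1,p_3\},\{p_2,p_4\}\}$ and forces the pairing $\{\{p_1,p_2\},\{p_3,p_4\}\}$. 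The analogous check for the second RHS term works the same way. Once you make that explicit, the map ``toggle along the path through $p_2$'' is a well-defined involution on the disjoint union of the three index sets, exchanging the LHS pairs with the RHS pairs, and weight preservation is immediate since the multiset $M_1\cup M_2$ is unchanged. Your final paragraph already flags exactly this issue, so the proposal is sound.
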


\begin{lemma}[Non-alternating Kuo Condensation; Theorem 5.3 in \cite{kuo2004applications}]
\label{lem:KuoNal}
Let $G$ be a weighted planar bipartite graph discussed above with $|V_1|=|V_2|$. Assume that $p_1,p_2,p_3,p_4$ are four vertices appearing in a cyclic order on a face of $G$ with $p_1,p_2\in V_1$ and $p_3,p_4\in V_2$. Then
\begin{align*}
w(G-\{p_1,p_4\})w(G-\{p_2,p_3\})=&w(G)w(G-\{p_1,p_2,p_3,p_4\})\\
&+w(G-\{p_1,p_3\})w(G-\{p_2,p_4\}).
\end{align*}
\end{lemma}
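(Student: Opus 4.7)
The plan is to prove the identity by a weight-preserving bijection, following the path-swapping template standard for graphical condensation proofs. For each of the three products $w(H_1)w(H_2)$ appearing in the identity, I would unpack it as a sum over pairs $(M_1,M_2)$ of perfect matchings of the respective subgraphs weighted by the product of edge weights, and then consider the multigraph $M_1 \cup M_2$. Since $M_1$ and $M_2$ together cover the same vertices of $G$ except that exactly one of them omits each of $p_1,p_2,p_3,p_4$, every vertex of $G$ has degree $2$ in $M_1 \cup M_2$ except $p_1,p_2,p_3,p_4$, which have degree $1$. Therefore $M_1 \cup M_2$ decomposes into disjoint simple cycles together with exactly two vertex-disjoint paths, whose four endpoints pair up the set $\{p_1,p_2,p_3,p_4\}$.

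Next I would classify which pairings can actually occur in each term, using two constraints. The \emph{parity constraint} is bipartite in nature: edges strictly alternate between $M_1$ and $M_2$ along each path, so the parity of the path length is forced by which matching contributes each endpoint edge, and that parity must agree with whether the two endpoints lie in the same or opposite part of $V_1 \cup V_2$. The \emph{planarity constraint} uses the hypothesis that $p_1,p_2,p_3,p_4$ sit in cyclic order on a face: by a Jordan-curve argument, two vertex-disjoint paths whose four endpoints alternate around that face (as in the pairing $\{p_1,p_3\},\{p_2,p_4\}$) cannot coexist in the planar embedding. I expect these two constraints to leave exactly the pairings $\{p_1,p_4\},\{p_2,p_3\}$ and $\{p_1,p_2\},\{p_3,p_4\}$ on the LHS; only $\{p_1,p_4\},\{p_2,p_3\}$ on $w(G)w(G-\{p_1,p_2,p_3,p_4\})$; and only $\{p_1,p_2\},\{p_3,p_4\}$ on $w(G-\{p_1,p_3\})w(G-\{p_2,p_4\})$.

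Finally, I would construct weight-preserving bijections by path-swapping. Given an LHS pair whose path-pairing is $\{p_1,p_4\},\{p_2,p_3\}$, I toggle the matching assignment of each edge along the $p_1$--$p_4$ path; the resulting $(M',N')$ lands in $\mathcal{M}(G) \times \mathcal{M}(G-\{p_1,p_2,p_3,p_4\})$ because the swap reattaches the endpoint edges at $p_1$ and $p_4$ to $M'$ and detaches them from $N'$, while leaving the other path and all cycles untouched. Similarly, an LHS pair with pairing $\{p_1,p_2\},\{p_3,p_4\}$ maps, via the $p_1$--$p_2$ toggle, into $\mathcal{M}(G-\{p_1,p_3\}) \times \mathcal{M}(G-\{p_2,p_4\})$. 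Each toggle is its own inverse and preserves the multiset of edges used and hence the total weight, so the resulting bijections give the identity term by term.

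The main obstacle I anticipate is justifying the planarity exclusion of the pairing $\{p_1,p_3\},\{p_2,p_4\}$ cleanly: one must argue that two vertex-disjoint paths in $G$ connecting $p_1$ to $p_3$ and $p_2$ to $p_4$ are forbidden by the cyclic order of the four points on a common face, for instance by closing the face with an external arc from $p_1$ to $p_3$ and invoking the Jordan curve theorem to obstruct a disjoint $p_2$-to-$p_4$ path. Once this topological step is in place, the remaining parity bookkeeping and the verification that the path-swap is the correct inverse-preserving bijection are routine.
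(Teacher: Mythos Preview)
The paper does not prove this lemma at all: it is stated as a black-box citation of Theorem~5.3 in Kuo's paper \cite{kuo2004applications} and used without proof. So there is no in-paper argument to compare against.

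That said, your proposal is correct and is precisely the standard superposition/path-swapping argument that Kuo uses in the cited reference. Your parity and planarity bookkeeping is right: on the left-hand side parity alone already kills the crossing pairing $\{p_1,p_3\},\{p_2,p_4\}$, while on each right-hand term parity eliminates one non-crossing pairing and the Jordan-curve argument eliminates the crossing one, leaving exactly the single pairing you name in each case. One small cosmetic point: toggling along the $p_1$--$p_2$ path sends an LHS pair to $\mathcal{M}(G-\{p_2,p_4\})\times\mathcal{M}(G-\{p_1,p_3\})$ rather than the order you wrote, so to land literally in $\mathcal{M}(G-\{p_1,p_3\})\times\mathcal{M}(G-\{p_2,p_4\})$ you should toggle the $p_3$--$p_4$ path instead (or just note that the weight is symmetric in the two factors). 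This does not affect correctness.
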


\section{Contours for Cluster Variables}\label{sec:contour}
\label{sec:contour}

In this section, we describe a method to get the subgraph corresponding to any cluster variable obtained by toric mutations for the dP$_2$ quiver. Specifically, we use 5-sided contours to cut our brane tiling. We will define the rules to cut-out the subgraphs, and the formula of the contours. 

\subsection{Graphs from Contours}
Given a 5-tuple $(a,b,c,d,e)\in\Z^5$ with $a+b=d$ and $a+e=c$ (see Figure~\ref{fig:fund} right for those relations), we consider a 5-sided \textbf{contour} whose side-lengths are $a,b,c,d,e$ in clockwise order, starting from the upper right corner. Figure~\ref{fig:fund} (left) shows the fundamental shape of the contour, with each length being positive. In the case of negative side-lengths, we draw the corresponding side in the opposite direction. 

See Figure~\ref{fig:length} (left) for an example of a $5$-tuple and its contour.  We abuse notation and denote a geometric contour by its corresponding $5$-tuple.

\begin{figure}[h!]
\includegraphics[scale=0.1]{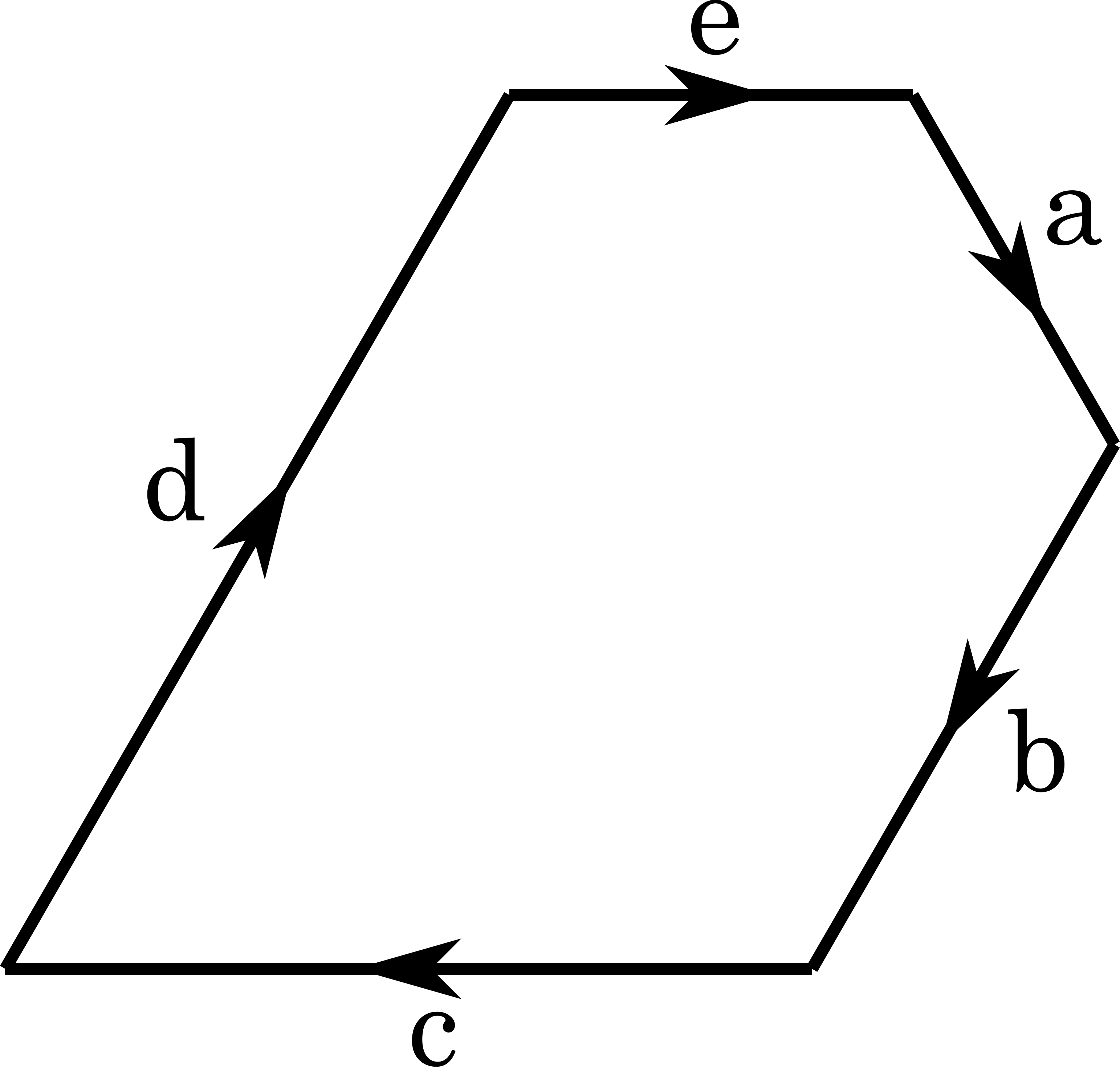}
\qquad
\includegraphics[scale=0.1]{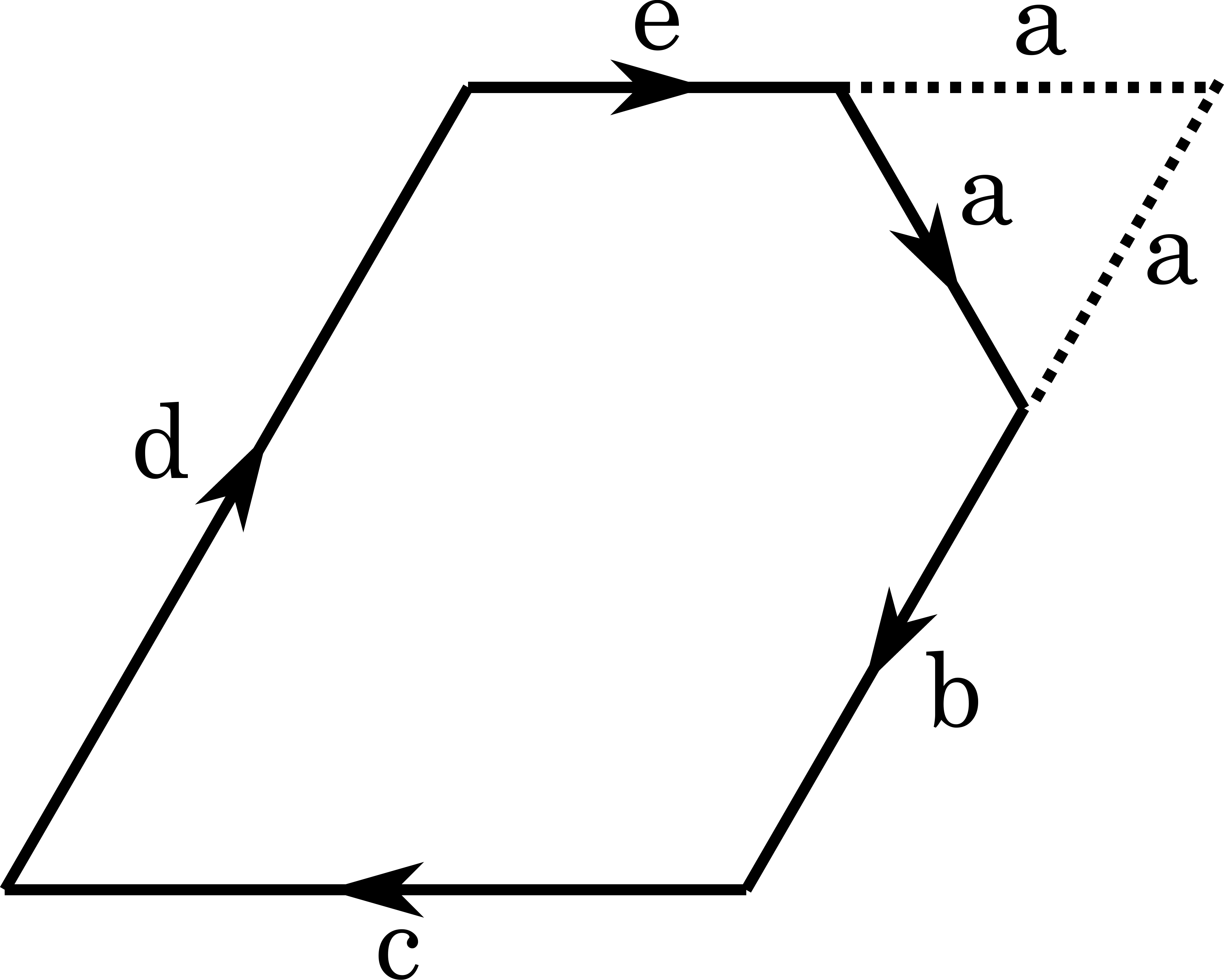}
\caption{\textbf{Left}: 5-sided fundamental shape; \textbf{Right}: relations between side lengths.}
\label{fig:fund}
\end{figure} 

Now we define the rule to get a subgraph from a contour.

\begin{definition} [Rules to Get Subgraph] \label{def:get-subgraph}

\ 

The white vertex between edges $c$ and $d$ is called the $\textbf{special vertex}$.

$\textbf{Step 1:}$ Given a 5-sided contour $C = (a,b,c,d,e)\in\mathbb{Z}^5$, we superimpose the contour on the brane tiling $\mathcal{T}$ such that the vertex between side $a$ and $e$ sits on any white vertex of degree 5, while each side follows.

$\textbf{Step 2:}$ On each side of positive length, we keep the black points while removing the white points; on each side of negative length, we keep the white points while removing the black points; on each side of zero length, we remove the single white point if it is not the special vertex. 

$\textbf{Step 3:}$ Each corner vertex is white. If the two adjacent sides of a corner vertex are both non-positive, we keep the vertex; otherwise, we remove it. As for the special vertex, if $a$ is even, we keep the special vertex; if $a$ is odd, we remove the special vertex.  Call the graph that remains inside the contour $\cal{G}(C)$.

$\textbf{Step 4:}$ In the resulting graph, we connect any vertex of valence 1 to its adjacent vertex. Call the edge of this connection a \textbf{forced matching}. Then delete these two vertices from the graph. Repeat this step until every vertex in the subgraph has valence at least 2. 

$\textbf{Step 5:}$ Call the resulting graph $\hatcal{G}(C)$ the $\textbf{subgraph}$ of contour $C$.  Often we may refer to $\hatcal{G}(C)$ as either $\hatcal{G}(a,b,c,d,e)$ or simply $\hatcal{G}$.  
\end{definition}

\begin{definition}
For any graph $G$, let $\hat{G}$ denote the graph obtained by removing all forced matchings.
\end{definition}

\begin{remark}
Note that our notation of graphs $\cal{G}$ and $\hatcal{G}$ for a contour is the opposite of the notation in \cite{lai2015beyond}.
\end{remark}

\begin{figure}
\includegraphics[scale = 0.18]{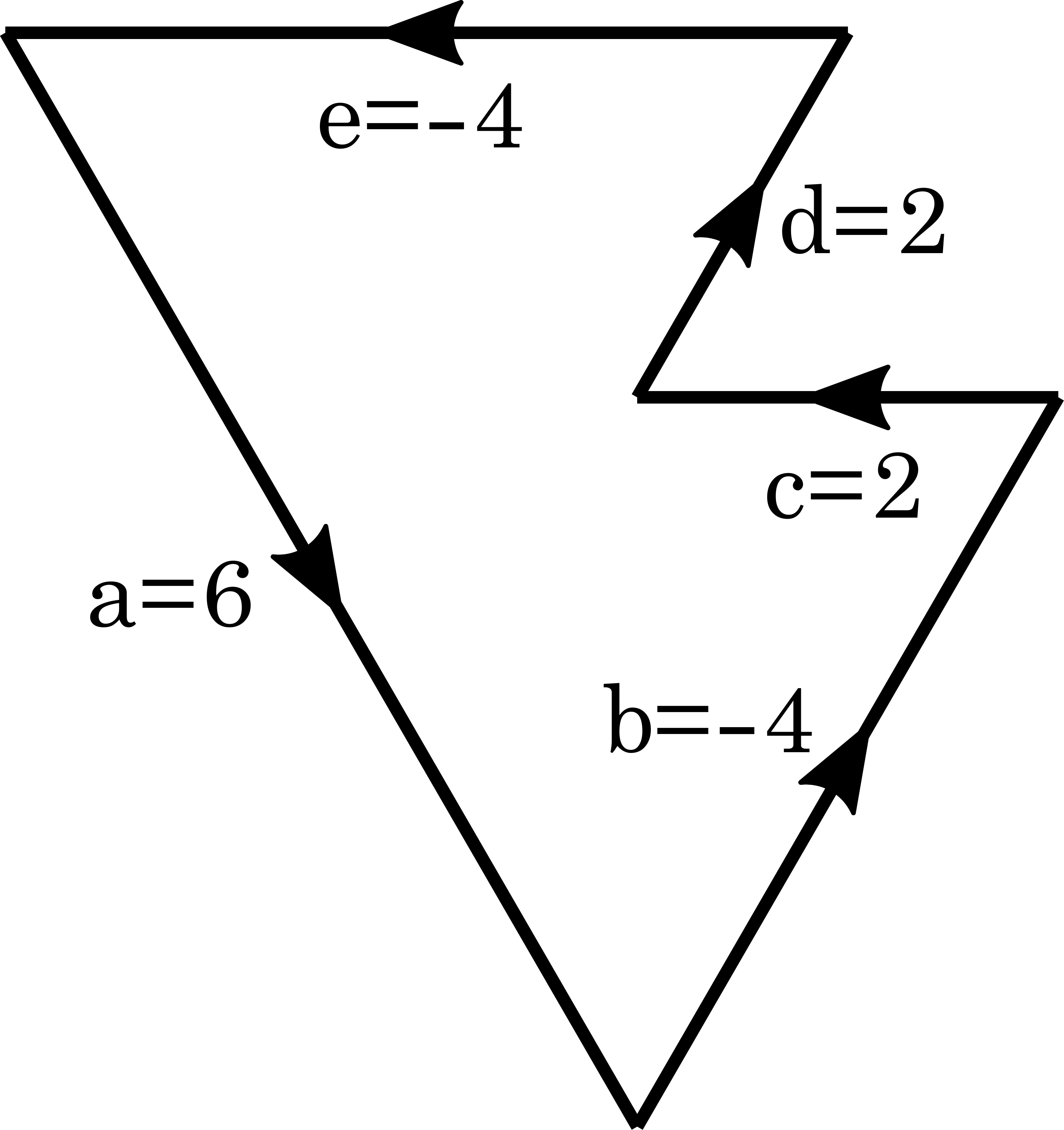}
\includegraphics[scale = 0.18]{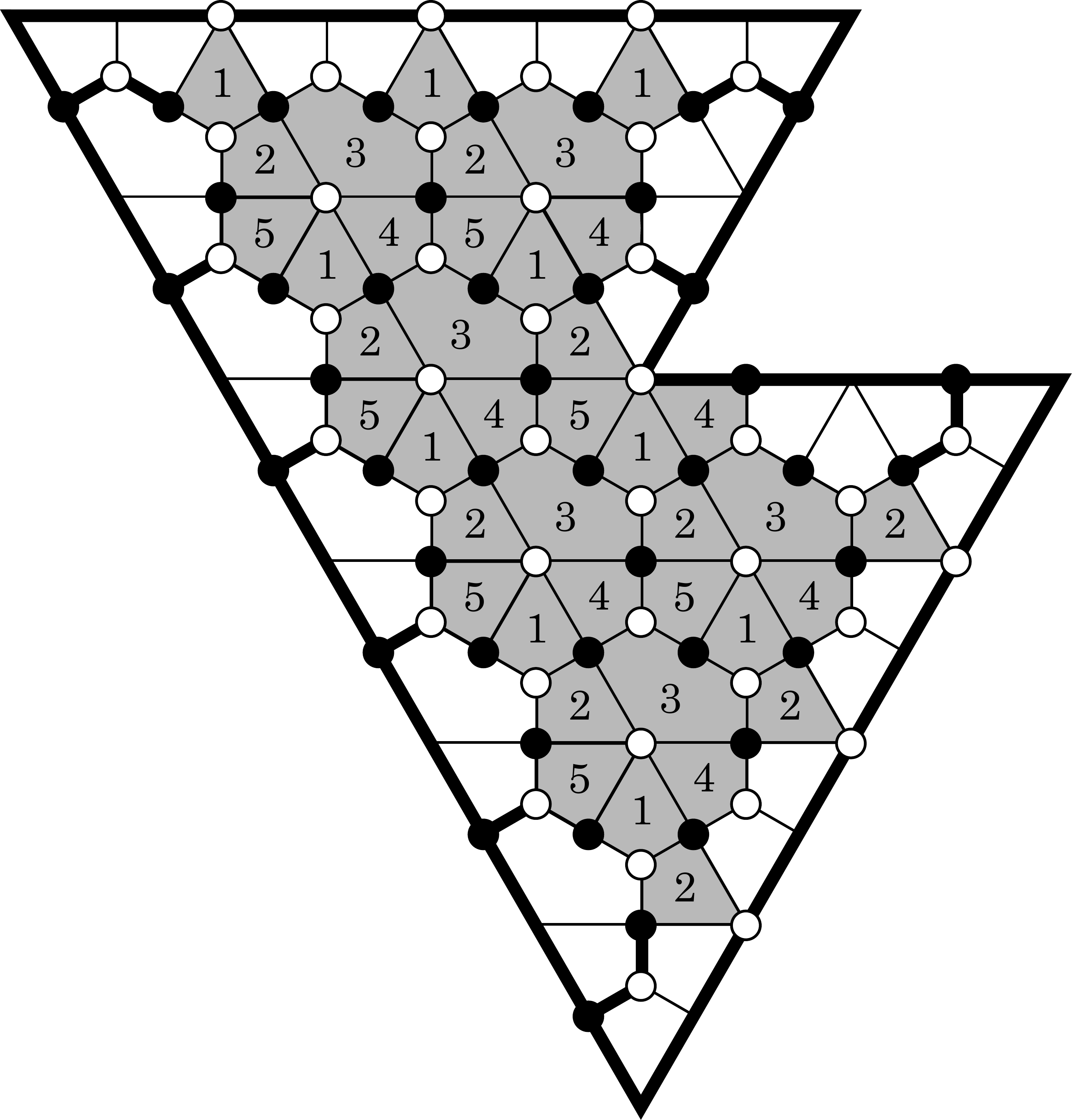}
\includegraphics[scale = 0.18]{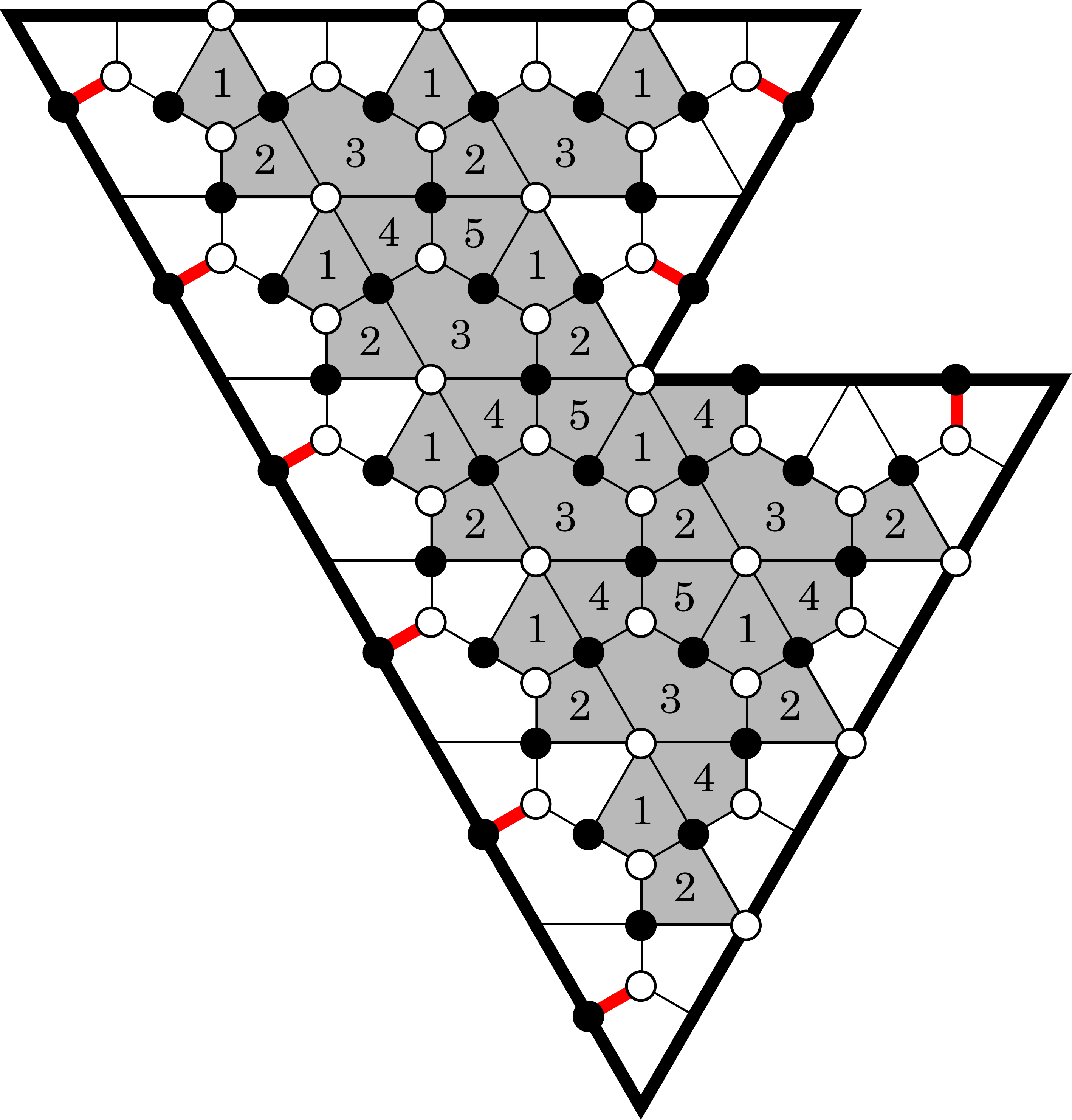}
\caption{Example of a contour $C = (6, -4, 2, 2, -4)$ and its subgraphs $\cal{G}(C)$ (shaded region and darkened edges) and $\hatcal{G}(C)$ (shaded region).}
\label{fig:length}
\end{figure}
%maybe include picture of \cal{G}(C)

We have already defined the weighting $w(G)$ of a graph $G$ in Definition~\ref{def:weighting}.  To fully recover the cluster variables from graphs, we define covering monomials for this specific brane tiling. The covering monomial has a more general definition in \cite{jeong2000brane} and \cite{jeong2011bipartite}. 

\begin{definition}[Covering Monomial]\label{def:cm}
For this definition, we think of every block labeled $3$ as two separate blocks labeled $3$. 
Given a contour $C$, let $a_j$ be the number of blocks labeled $j$ enclosed in $C$.  Let $b_j$ be the number of blocks labeled $j$ adjacent to a forced matching in $C$.  If the special vertex is kept (i.e. if $a$ is even) and the contour passes through the middle of a $3$-block near the special vertex (see Figure~\ref{fig:cov-mon-rule}), let $c_3 = 1$.  Otherwise, let $c_3 = 0$.  The covering monomial of graph $\cal{G}(C)$, denoted as $m(\cal{G}(C))$, is the product $x_1^{a_1}x_2^{a_2}x_3^{a_3+c_3}x_4^{a_4}x_5^{a_5}$. The covering monomial of graph $\hatcal{G}(C)$, denoted as $m(\hatcal{G}(C))$, is the product $x_1^{a_1-b_1}x_2^{a_2-b_2}x_3^{a_3-b_3+c_3}x_4^{a_4-b_4}x_5^{a_5-b_5} = \frac{m(\cal{G}(C))}{x_1^{b_1}x_2^{b_2}x_3^{b_3}x_4^{b_4}x_5^{b_5}}$.
\end{definition}

\begin{figure}[h!]
\includegraphics[scale = 0.2]{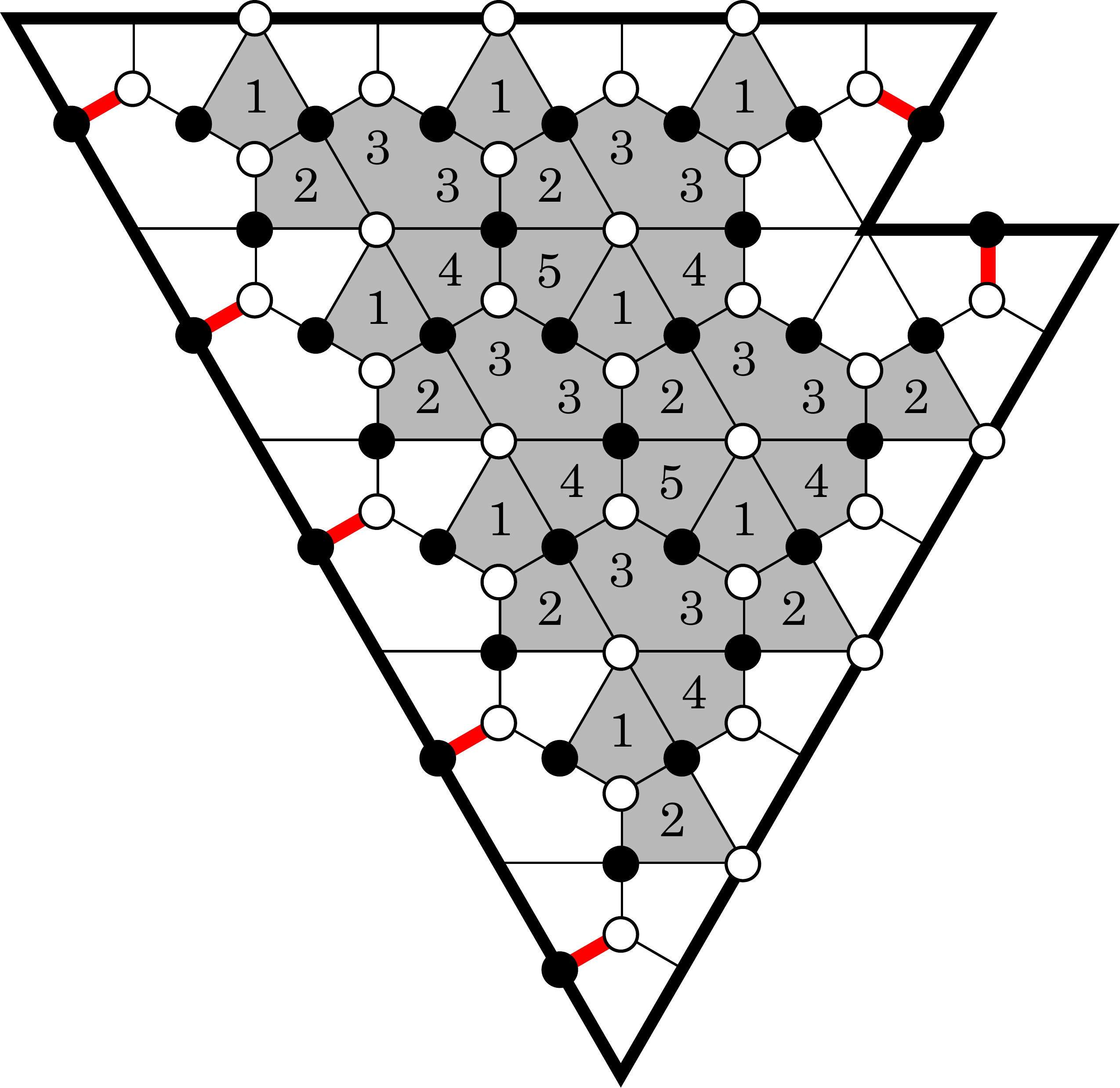}
\includegraphics[scale = 0.2]{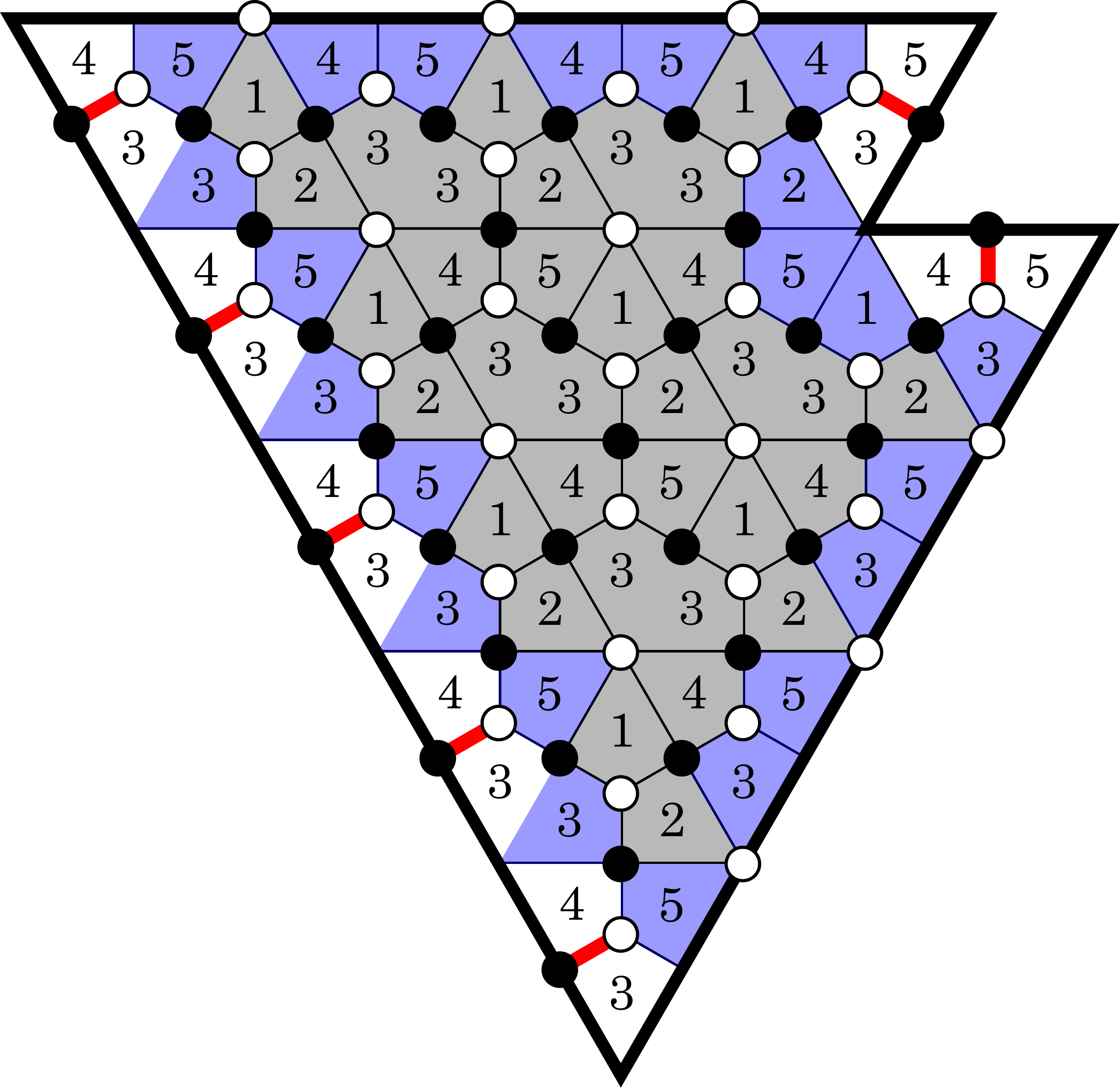}
\caption{Example of a subgraph $\hatcal{G}(5,-4,1,1,-4)$ and its covering monomial $m(\hatcal{G}(5,-4,1,1,-4))$.  The gray and purple blocks are included in the covering monomial.}
\label{fig:cov-mon-rule}
\end{figure}

\begin{remark}
Our definitions of weight and covering monomial remain unchanged if we think of each six sided $3$-block as two separate four sided blocks without an edge between them.  Each $3$-block will be drawn as two separate $3$-blocks if they appear on the boundary of our contour for sake of visualizing weight and covering monomial. 
\end{remark}

For any graph $G$ with an associated contour, denote the product of its weight and its covering monomial as 
\begin{align*}
c(G) := w(G)m(G).
\end{align*}

\subsection{Contours of Cluster Variables}

By Corollary 4.5, we have that all the cluster variables are of the form $A^{n^2}B^{n^2-n}x_{2k}$ or $A^{n^2+n}B^{n^2}x_{2k-1}$ where $n, k\in\Z$. Now we state the main result of this section that gives a formula of the contours of these two families. 

\begin{theorem}\label{thm:contours}
For $k\geq2$, we associate the following contours to the cluster variables such that if $C$ is the contour associated with a cluster variable, then $c(\hatcal{G}(C))$ equals the Laurent polynomial of that cluster variable.
\begin{align*}
A^{n^2}B^{n^2-n}x_{2k} &= c\left(\hatcal{G} \left(k-2+n,-\left\lceil \frac{k-4+5n}{2}\right\rceil, 2n-1, \left\lfloor \frac{k-3n}{2} \right\rfloor, 1+n-k\right)\right),\\
A^{n^2+n}B^{n^2}x_{2k-1} &= c\left(\hatcal{G} \left(k-2+n,-\left\lceil \frac{k-2+5n}{2} \right\rceil, 2n, \left\lfloor\frac{k-2-3n}{2} \right\rfloor, 2+n-k\right)\right).
\end{align*}
\end{theorem}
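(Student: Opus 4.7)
The plan is to induct on the "size" of the contour, using the three forms of Kuo condensation from Lemmas \ref{lem:KuoBal}, \ref{lem:KuoUnb}, and \ref{lem:KuoNal} to reduce each statement to cases with strictly smaller parameters $n$ and $k$. First I would establish a seed of base cases: verify directly that the contours associated to $x_1,\ldots,x_5$, together with a few neighboring variables such as $x_6$, $x_0$, $Ax_*$, $Bx_*$, yield subgraphs $\hatcal{G}(C)$ whose weighted perfect-matching sum times covering monomial is the claimed Laurent polynomial. These are small finite computations, and they supply enough initial data to seed every branch of the later induction.

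For the induction step, I would exploit the explicit recurrence provided by Theorem \ref{thm:rhocv} and Lemma \ref{lem:AB}: the Somos-5 relation
$$x_n x_{n+5} = x_{n+1} x_{n+4} + x_{n+2} x_{n+3},$$
combined with the expressions for $A$ and $B$ from Lemma \ref{lem:AB}, shows that each cluster variable in the two families $A^{n^2}B^{n^2-n}x_{2k}$ and $A^{n^2+n}B^{n^2}x_{2k-1}$ satisfies a binomial identity whose terms are again in one of those two families, with indices shifted by a single step. For a given target contour $C=(a,b,c,d,e)$, I would choose four vertices $p_1,p_2,p_3,p_4$ cyclically situated on a single face of $\cal{G}(C)$, typically near one chosen corner of the contour, so that each of the six graphs $\cal{G}(C)-S_i$ obtained by deleting subsets $S_i\subseteq\{p_1,p_2,p_3,p_4\}$ is itself the graph $\cal{G}(C_i)$ for a smaller contour $C_i$ still inside one of the two families. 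Applying the appropriate Kuo condensation then produces an identity
$$w(\cal{G}-S_1)w(\cal{G}-S_2) = w(\cal{G}-S_3)w(\cal{G}-S_4) + w(\cal{G}-S_5)w(\cal{G}-S_6),$$
which, after multiplying through by the universal monomial $m(\cal{G})^2/\big(\mathrm{wt}(p_1)\mathrm{wt}(p_2)\mathrm{wt}(p_3)\mathrm{wt}(p_4)\big)$, lifts to a corresponding identity $c(\hatcal{G}_1)c(\hatcal{G}_2)=c(\hatcal{G}_3)c(\hatcal{G}_4)+c(\hatcal{G}_5)c(\hatcal{G}_6)$ on the weighted-and-covered subgraphs. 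By the inductive hypothesis each $c(\hatcal{G}_i)$ equals the predicted Laurent polynomial, and matching coefficients of $A$, $B$, and $x_\bullet$ one checks that the resulting identity is exactly the Somos-5-style binomial identity the target cluster variable satisfies, closing the induction.

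The main obstacle is the case analysis. The contour $(a,b,c,d,e)$ has five coordinates whose individual signs (positive, negative, or zero) and the parity of $a$ (which controls whether the special vertex is kept) together produce a large family of qualitatively different shapes for $\cal{G}(C)$. The shape near each corner determines (i) which version of Kuo condensation is applicable, since $|V_1|$ versus $|V_2|$ depends on the shape, (ii) which four boundary vertices of $\cal{G}(C)$ lie on a common face so that they may be used for the condensation, and (iii) exactly how the forced-matching cleanup in Step 4 of Definition \ref{def:get-subgraph} interacts with deleting those four vertices and hence how the covering monomials transform. Consequently the induction is really several parallel inductions, one per sign pattern, each with its own choice of condensation, corner, four vertices, and covering-monomial bookkeeping. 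My plan, matching what the authors describe for Section \ref{sec:proof}, is to handle one representative sign pattern in full detail in the main text, carefully verifying the covering-monomial identities used to lift Kuo's matching identity to an identity on $c(\hatcal{G})$, and to tabulate the choices of $p_1,\ldots,p_4$ and the shifted contours $C_1,\ldots,C_6$ for the remaining sign patterns in Appendix \ref{sec:appendix}.
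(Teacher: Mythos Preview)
Your proposal is essentially the same approach as the paper's: induction via Kuo condensation, seeded by the initial Somos-5 terms, with a case analysis over the sign pattern of the contour and the parity of $a$, and with one representative case worked in full while the rest are tabulated. Two small corrections worth flagging before you implement it: (i) the four Kuo points are placed on four \emph{different} sides of the contour (typically one each on $b,c,d,e$ or $a,b,c,d$), not clustered near a single corner; and (ii) the contour $C$ fed into Kuo condensation is not always the target contour itself---in the unbalanced and non-alternating cases the target cluster variable appears as one of the graphs $G-S_i$ rather than as $G$, so you should think of $C$ as an auxiliary contour chosen so that the six $G-S_i$ include both the target and five previously-known variables.
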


Notice that when $k\leq 1$, we can reflect the subgraph of $A^pB^qx_{6-2k}$ ($p, q\in \Z$) along $x_3$, which means we replace $x_2$ with $x_4$ and $x_1$ with $x_5$ to get the subgraph of $A^pB^qx_{2k}$ since block 2 and block 4, and block 1 and block 5 are symmetric with respect to $x_3$ in the brane tiling and $A,\ B$ are also fixed if we interchange $x_2$ with $x_4$ or/and $x_1$ with $x_5$. Therefore we only need to consider the situation where $k \geq 2$.

Before proving this result, we first look at the six possible shapes of the contours based on the relationship between $n$ and $k$, as is shown in Figure \ref{fig:possible_shapes}.

\begin{figure}[h!]
\includegraphics[scale=0.5]{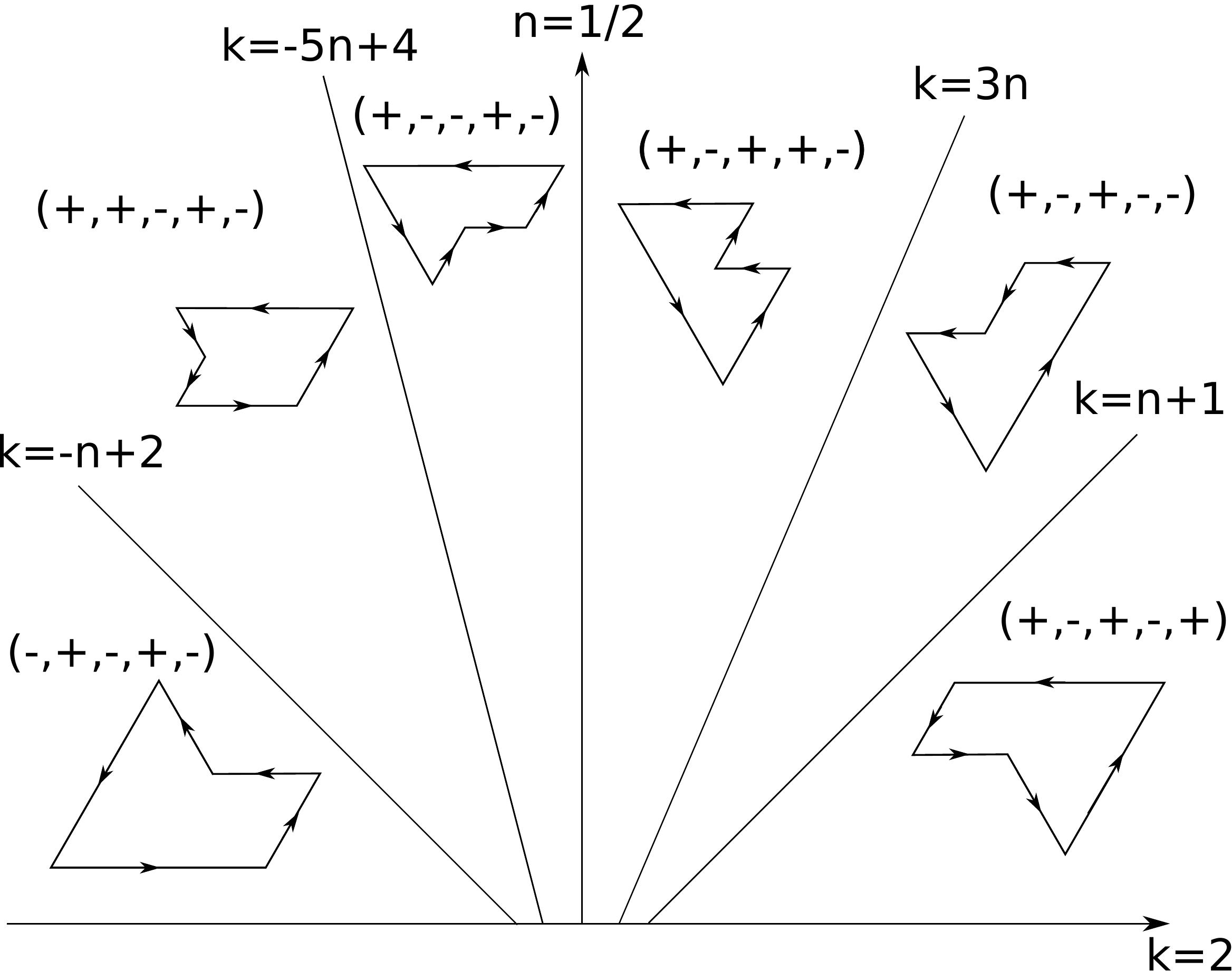}
\caption{Possible shapes of 5-sided contour}
\label{fig:possible_shapes}
\end{figure}

We use the entire next section to prove the main theorem.

\section{Proof of Main Theorem (Theorem~\ref{thm:contours})}\label{sec:proof}

\subsection{Overview of induction procedure}\label{sub:overview}
We use Kuo's condensation to inductively prove that multiplying the weight and covering monomial of these contours yields the Laurent polynomials of our cluster variables.  First we show that the weights satisfy the desired recurrences.  Then we show that for any form of recurrence, the covering monomials will be correct in the sense that multiplying the weight and covering monomials of our subgraphs gives the Laurent polynomial.  We abuse notation by saying a graph $G$ equals a cluster variable when we mean the weight of $G$, $w(G)$, will give us the cluster variable's Laurent polynomial with the appropriate covering monomial. 

The base case is $n=0$, which is proved in Section~\ref{sub:base}. Notice that when $n=0$, our formula for the contour in the main theorem contains two families: $\{x_{2k-1}\}_{k\geq2}$ and $\{x_{2k}\}_{k\geq2}$. So essentially, there are two different cases here.

After proving the base case when $n=0$, we split our way to consider the families of variables with $n>0$ and families of variables with $n<0$ separately. 

For $n\geq1$, by induction hypothesis, assume that we already have the contours for variables $A^{m^2}B^{m(m-1)}x_{2k}$ and $A^{m(m+1)}B^{m^2}x_{2k-1}$ for all $k\geq2$ and $0\leq m\leq n-1$. Then for each $k\geq2$, consider the following identity (recurrence):
\begin{align*}
&(A^{n^2}B^{n(n-1)}x_{2k})(A^{(n-1)^2}B^{(n-1)(n-2)}x_{2k+2}) \\
=&(A^{(n-1)n}B^{(n-1)^2}x_{2k-1})(A^{(n-1)n}B^{(n-1)^2}x_{2k+3}) + (A^{(n-1)n}B^{(n-1)^2}x_{2k+1})^2.
\end{align*}
It is clear that among all the five terms appeared above, $A^{n^2}B^{n(n-1)}x_{2k}$ is the only term that we do not have already. Therefore, it suffices to find some graph $G$ and points $p_1,p_2,p_3,p_4$ and use some version of Kuo's condensation theorem on it to prove that this term actually equals the weight of the subgraph that we described in the main theorem, correspondingly. Note that the graph $G$ we use and the points $p_1,p_2,p_3,p_4$ we choose depend on some relations between $n$ and $k$. Now that we have the terms $\{A^{n^2}B^{n(n-1)}x_{2k-1}\}$ for $k\geq2$, consider the following identity (recurrence) for each $k\geq3$:
\begin{align*}
&(A^{n(n+1)}B^{n^2}x_{2k-1})(A^{(n-1)^2+(n-1)}B^{(n-1)^2}x_{2k+1}) \\
=&(A^{n^2}B^{n(n-1)}x_{2k-2})(A^{n^2}B^{n(n-1)}x_{2k+2}) + (A^{n^2}B^{n(n-1)}x_{2k})^2.
\end{align*}
Similarly, there is only one term $A^{n(n+1)}B^{n^2}x_{2k-1}$ that we do not currently have. And by some Kuo's condensation theorem, we will get the desired result. One thing to notice here is that the above recurrence cannot be applied to $k=2$ since we do not have the term $A^{n^2}B^{n(n-1)}x_2$ in our theorem. To solve this problem and get the contour formula for $A^{n(n+1)}B^{n^2}x_3$, we use the following recurrence:
\begin{align*}
&(A^{n(n+1)}B^{n^2}x_3)(A^{n^2}B^{n(n-1)}x_8) \\
=&(A^{n(n+1)}B^{n^2}x_5)(A^{n^2}B^{n(n-1)}x_6) + (A^{n(n+1)}B^{n^2}x_7)(A^{n^2}B^{n(n-1)}x_4).
\end{align*}
Once this step is done, our inductive step is finished.

For $n\leq-1$, the argument is very similar. By induction hypothesis, assume that we already have the contours for variables $A^{m^2}B^{m(m-1)}x_{2k}$ and $A^{m(m+1)}B^{m^2}x_{2k-1}$ for all $n+1\leq m\leq 0$. The recurrence
\begin{align*}
&(A^{n^2}B^{n(n-1)}x_{2k})(A^{(n+1)^2}B^{n(n+1)}x_{2k+2}) \\
=&(A^{n(n+1)}B^{n^2}x_{2k-1})(A^{n(n+1)}B^{n^2}x_{2k+3}) + (A^{n(n+1)}B^{n^2}x_{2k+1})^2
\end{align*}
will give us contours for all variables $A^{n^2}B^{n(n-1)}x_{2k}$ for all $k\geq2$. After that, the recurrence
\begin{align*}
&(A^{n(n+1)}B^{n^2}x_{2k-1})(A^{(n+1)^2+(n+1)}B^{(n+1)^2}x_{2k+1}) \\
=&(A^{(n+1)^2}B^{(n+1)n}x_{2k-2})(A^{(n+1)^2}B^{(n+1)n}x_{2k+2}) + (A^{(n+1)^2}B^{(n+1)n}x_{2k})^2
\end{align*}
will give us contours for all variables $A^{n(n+1)}B^{n^2}x_{2k-1}$ for all $k\geq 3$. For the missing variables in the form of $A^{n(n+1)}B^{n^2}x_3$, we use the recurrence
\begin{align*}
&(A^{n(n+1)}B^{n^2}x_3)(A^{(n+1)^2}B^{n(n+1)}x_8) \\
=&(A^{n(n+1)}B^{n^2}x_5)(A^{(n+1)^2}B^{n(n+1)}x_6) + (A^{n(n+1)}B^{n^2}x_7)(A^{(n+1)^2}B^{n(n+1)}x_4).
\end{align*}
This completes the inductive step.

Section~\ref{sub:base} proves the base case $(n=0)$ and Section~\ref{sub:induction} proves one case of the inductive step. Notice that for the inductive step, we have 28 cases in total and we will not present explicit proofs for all cases. The cases are divided, generally speaking, by whether side lengths of the contour are greater or smaller than 0 and by some parity conditions on $n$ and $k$. Section~\ref{sub:technique} gives a summary of the techniques used to prove the remaining cases. All of them can be proved in exactly the same format, and in Appendix~\ref{sec:appendix}, we provide the necessary data for readers to verify the correctness of these remaining cases. 

\subsection{Overview of Proof Techniques}\label{sub:technique}
We have different cases to prove depending on the relations between $n$ and $k$ since different relations will lead to different shapes of our contour. In this section, we introduce the general format while the details are given in Appendix~\ref{sec:appendix}.

\

\noindent\textbf{Step 1.} 
Consider a contour $C=(a,b,c,d,e)$ with the special vertex kept or removed, and 4 points $p_1,p_2,p_3,p_4$ inside the contour. Depending on the whether the graph $\cal{G}(C)$ is balanced or not and depending on the colors and positions of $p_1,p_2,p_3,p_4$, we use a particular version of Kuo's condensation theorems which are always of the form: $$w\left(\cal{G}(C)-S_1\right)w(\cal{G}(C)-S_2)=w\left(\cal{G}(C)-S_3\right)w(\cal{G}(C)-S_4)+w\left(\cal{G}(C)-S_5\right)w(\cal{G}(C)-S_6),$$ where each $S_i$ is a subset of $\{p_1,p_2,p_3,p_4\}$. Notice that in this step, $\cal{G}(C)-S_i$ may include many forced matchings. 
Now we multiply both sides of the equation by $m(\cal{G}(C))^2$, the square of the covering monomial of the graph $G$. Each term in the equation is then of the form $m(\cal{G}(C))w(\cal{G}(C)-S_i).$

\

\noindent\textbf{Step 2.}
For each $i=1,\ldots,6$, we find a contour $C_i$ inside $C$ such that $\hat{\cal{G}(C)-S_i}=\hat{\cal{G}}(C_i)$. Recall that $\hat{G}$ is graph $G$ with all forced matchings removed. We find $C_i$ by first describing points $p_1,p_2,p_3,p_4$ and how removing each point separately will change the contour $C$. Then we can add these effects together to get the total effect of removing $S_i$. Notice that the additivity of such effects is nontrivial in general, but it is easy to verify for each of our cases.

This is the core step of our proof. The effects of removing each point $p_i$ from $\hat{\cal{G}}(C)$ will be stated and justified through diagrams.

\

\noindent\textbf{Step 3.}
Now we want to relate $m(\cal{G}(C))w(\cal{G}(C)-S_i)$ to $c(\hat{\cal{G}}(C_i))$. Consider $\cal{G}(C_i)$. By definition, we know that $\cal{G}(C_i)$ and $\cal{G}(C)-S_i$ only differ by a set of forced matchings of $\cal{G}(C)-S_i$ inside contour $C$ and outside contour $C_i$. Meanwhile, $m(\cal{G}(C))$ and $m(\cal{G}(C_i))$ differ by a factor of the product of all the blocks (the product of variables corresponding to the blocks) inside $C$ but outside $C_i$. As each block can be in only one forced matching (otherwise the matching would not be forced), the quotient
$$\frac{m(\cal{G}(C))w(\cal{G}(C)-S_i)}{m(\cal{G}(C_i))w(\cal{G}(C_i))}$$
is the product of all the blocks inside $C$ and outside $C_i$ that are not adjacent to any forced matchings inside $C$ and outside $C_i$. Let these blocks form set $T_i$. We are also using the notation $T(S_i)$ with $T_i$ interchangeably. For each case, we will explicitly provide $T_1,\ldots,T_6$ for a choice of points $p_1,p_2,p_3,p_4$ and check that
\begin{equation}
\left(\prod_{j\in T_1}x_j\right)\left(\prod_{j\in T_2}x_j\right)=\left(\prod_{j\in T_3}x_j\right)\left(\prod_{j\in T_4}x_j\right)=\left(\prod_{j\in T_5}x_j\right)\left(\prod_{j\in T_6}x_j\right).
\label{eqn:T_i}
\end{equation}

Also, notice that
$$m(\cal{G}(C_i))w(\cal{G}(C_i))=m(\hat{\cal{G}}(C_i))w(\hat{\cal{G}}(C_i))=:c(\hat{\cal{G}}(C_i))$$
since by definition, both $m(\cal{G}(C_i))/m(\hat{\cal{G}}(C_i))$ and $w(\hat{\cal{G}}(C_i))/w(\cal{G}(C_i))$ equals the product of blocks adjacent to the forced matchings of $\cal{G}(C_i).$ Combining these arguments, we conclude that
$$c(G_1)c(G_2)=c(G_3)c(G_4)+c(G_5)c(G_6)$$
where $G_i=\cal{G}(C_i).$

In this step, we are essentially checking that the covering monomials match up with the weights used in Kuo's condensation theorems to give the correct Laurent polynomials. 

\

\noindent\textbf{Step 4.} Using the induction hypothesis, we can identify five of the expressions $c(G_i)$ as the Laurent polynomials of cluster variables. Therefore, the sixth expression is the Laurent polynomial of the next cluster variable in the sequence.

\

We provide the details of these steps in Section~\ref{sub:base} and Section~\ref{sub:induction}.

\begin{definition}[Notation]\label{def:notation}
We establish the following notations before presenting the proof.

Let $(a, b, c, d, e)-K$ be the contour of side lengths $a,b,c,d,e$ with the special vertex kept and $(a,b,c,d,e)-R$ be the contour of side lengths $a,b,c,d,e$ with the special vertex removed. We write $\cal{G}(a, b, c, d, e,)-K$ (resp. $-R$) to denote the subgraph obtained from contour $(a, b, c, d, e) -K$ (resp. $-R$).  Similarly for $\hatcal{G}$.

We say that point $p_i$ is a white (or black) point on edge $a$ (or $b,c,d,e$) if it is one of the white (or black) points on the boundary of $\hatcal{G}(C)$ facing edge $a$, where $C$ is some contour. This notation follows from \cite{lai2015beyond} and it does not necessarily mean that $p_i$ is on edge $a$ (or $b,c,d,e$) of the contour.
\end{definition}

\subsection{Base case $(n=0)$}\label{sub:base}
When $n = 0$, the cluster variables $A^{n^2}B^{n^2-n}x_{2k}$ and $A^{n^2+n}B^{n^2}x_{2k-1}$ where $n\in\Z, k \in\Z_{\ge 0}$ become the terms $\{x_m\}_{m \in \Z}$ of the Somos-5 sequence.  

For $1 \le i \le 5$, let $C_i$ be the contour defined in Theorem~\ref{thm:contours} for the initial cluster variable $x_i$.  We verify the weights and covering monomials of these contours.  As shown in Figure~\ref{fig:somos5}, the subgraphs for these cluster variables are empty so they have weight $1$. Recall that by definition, the covering monomials for $C_3$ and $C_4$ have an additional $x_3$ term.  We can see that $c(\hatcal{G}(C_i)) = x_i$ for $1 \le i \le 5$. 

\begin{figure}[h!]
\includegraphics[scale = 0.3]{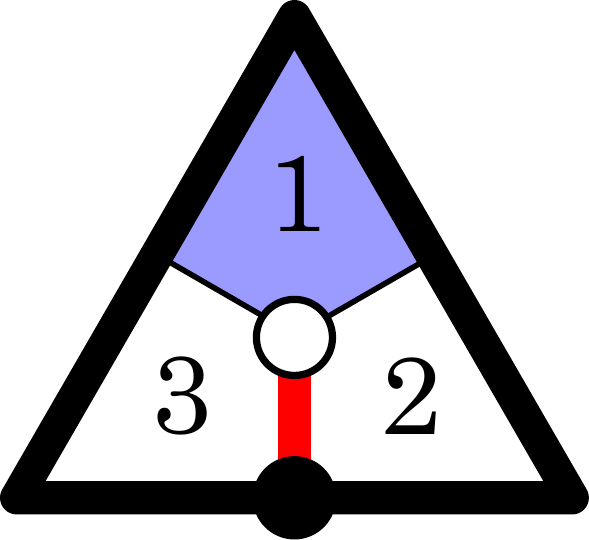}
\includegraphics[scale = 0.3]{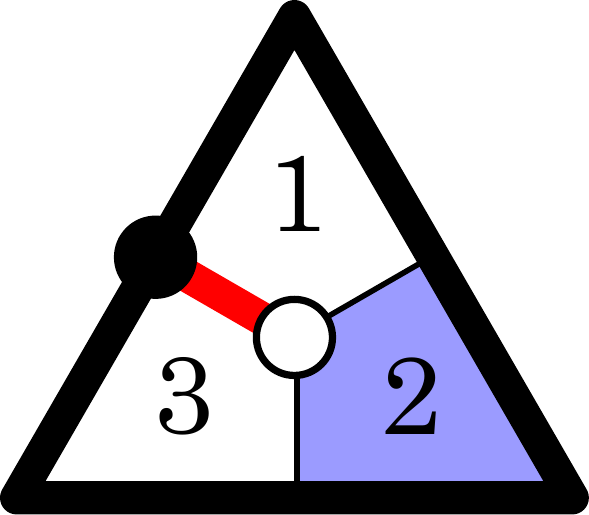}
\includegraphics[scale = 0.3]{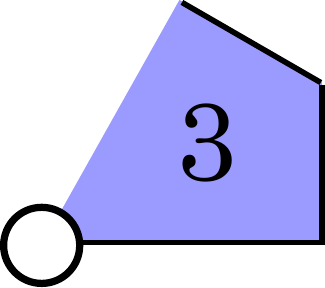}
\includegraphics[scale = 0.3]{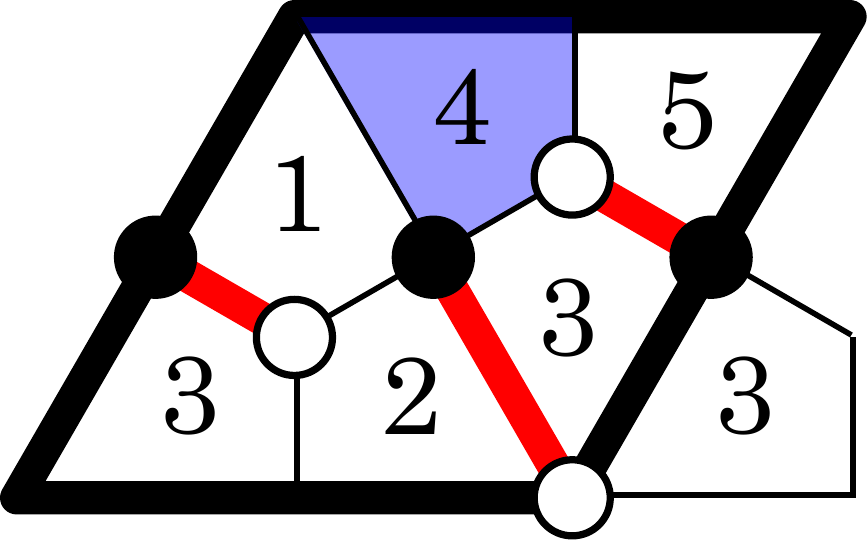}
\includegraphics[scale = 0.3]{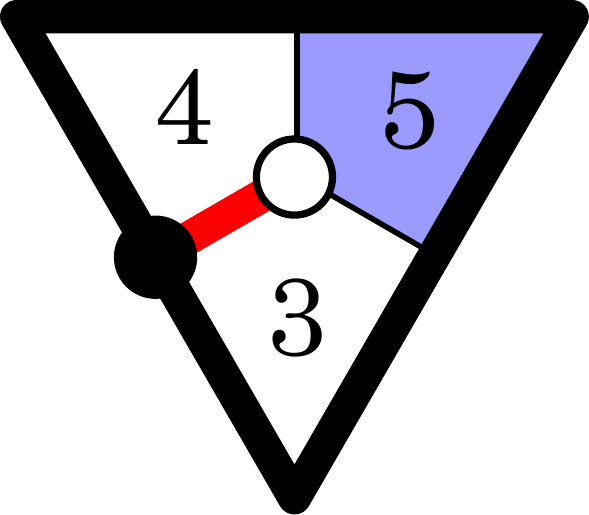}
\caption{For $1 \le i \le 5$, we give contours $C_i$ for terms $x_i$ of the Somos-5 sequence.  The purple blocks are what remain after multiplying the weights and covering monomials of these graphs.}
\label{fig:somos5}
\end{figure}

Now assume the contours for $x_i$ for all $i \le m-1$ give the correct Laurent polynomials for our cluster variables.  We show the contour defined in Theorem~\ref{thm:contours} for $x_m$ is correct.

\

\noindent\textbf{Case 1:} $m = 2k-1$. We take the following contour
\begin{align*}
C=(a,b,c,d,e)
=\left(k-2, -\left\lceil \frac{k-2}{2} \right\rceil, 0, \left\lfloor \frac{k-2}{2} \right\rfloor, 2-k\right).
\end{align*}
Since $k > 3$, we have $a>0, b<0, d \geq0$ and $e<0$.

Let $G=\hatcal{G}(C)$. We then follow the steps shown in Section~\ref{sub:technique}. 

\

\noindent\textbf{Step 1.} 
We apply balanced Kuo's condensation theorem (Lemma~\ref{lem:KuoBal}) and write down
\begin{align*}
w(\cal{G}(C))w(\cal{G}(C)-\{p_1,p_2,p_3,p_4\})=&w(\cal{G}(C)-\{p_1,p_2\})w(\cal{G}(C)-\{p_3,p_4\})\\
&+w(\cal{G}(C)-\{p_1,p_4\})w(\cal{G}(C)-\{p_2,p_3\}).
\end{align*}
where we let $S_1=\emptyset$, $S_2=\{p_1,p_2,p_3,p_4\}$, $S_3=\{p_1,p_2\}$, $S_4=\{p_3,p_4\}$, $S_5=\{p_1,p_4\}$, $S_6=\{p_2,p_3\}$. Then we multiply both sides by $m(\cal{G}(C))^2$. 

\

\noindent\textbf{Step 2.} We define the black points $p_1,p_3$ and white points $p_2,p_4$ as follows.

\begin{itemize}
\item Let $p_1$ be any black point on edge $e$.
\item Let $p_2$ be any white point on edge $a$.  
\item Let $p_3$ be any black point on edge $b$.
\item Let $p_4$ be a white point near edge $c$ defined as follows: 
\begin{itemize}
\item If $k \equiv 0$ (mod 2), then $a\equiv 0$ (mod 2) so the special vertex is kept.  Let $p_4$ be the kept special white point between edges $c$ and $d$.
\item If $k \equiv 1$ (mod 2), then $a\not \equiv 0$ (mod 2) so the special vertex is removed.  Let $p_4$ be the other white point on the 5-block which contains the removed white point between edges $c$ and $d$.
\end{itemize}
\end{itemize}

We also give the effects of removing each point separately:
\begin{itemize}
\item The effect of removing $p_1$ is $(a,b,c,d,e)\rightarrow(a-1,b,c,d-1,e+1).$ We may also write this succinctly as $-\{p_1\}=(-1,0,0,-1,1).$ This effect is equivalent to deleting a trapezoid along edge $e$ of the original contour.
\item The effect of removing $p_2$ is $(a,b,c,d,e)\rightarrow(a-1,b+1,c,d,e+1)$. It is equivalent to deleting a trapezoid along edge $a$.
% Changed p_3 here - need to be checked
\item The effect of removing $p_3$ is $(a,b,c,d,e)\rightarrow(a-1,b+1,c-1,d,e).$
\item The effect of removing $p_4$ is $(a,b,c,d,e)-K\rightarrow(a,b,c,d,e)-R$ and $(a,b,c,d,e)-R\rightarrow(a,b+1,c,d+1,e)=K$ depending on the parity of $k$.
\end{itemize}

The position of each point and the effect of removing each point can be seen in Figure~\ref{fig:effect_base1_K} (special point kept) and Figure~\ref{fig:effect_base1_R} (special point removed). In the figures, we use big red dots to indicate point $p_i$ and red edges to indicate forced matchings. The shadowed region is what's removed from the original contour after deleting the corresponding point. We also use black letters $K/R$ to indicate whether the special point is kept or removed in the original contour and use blue letters for the new contour. 
\begin{figure}[h!]
\includegraphics[scale=0.1]{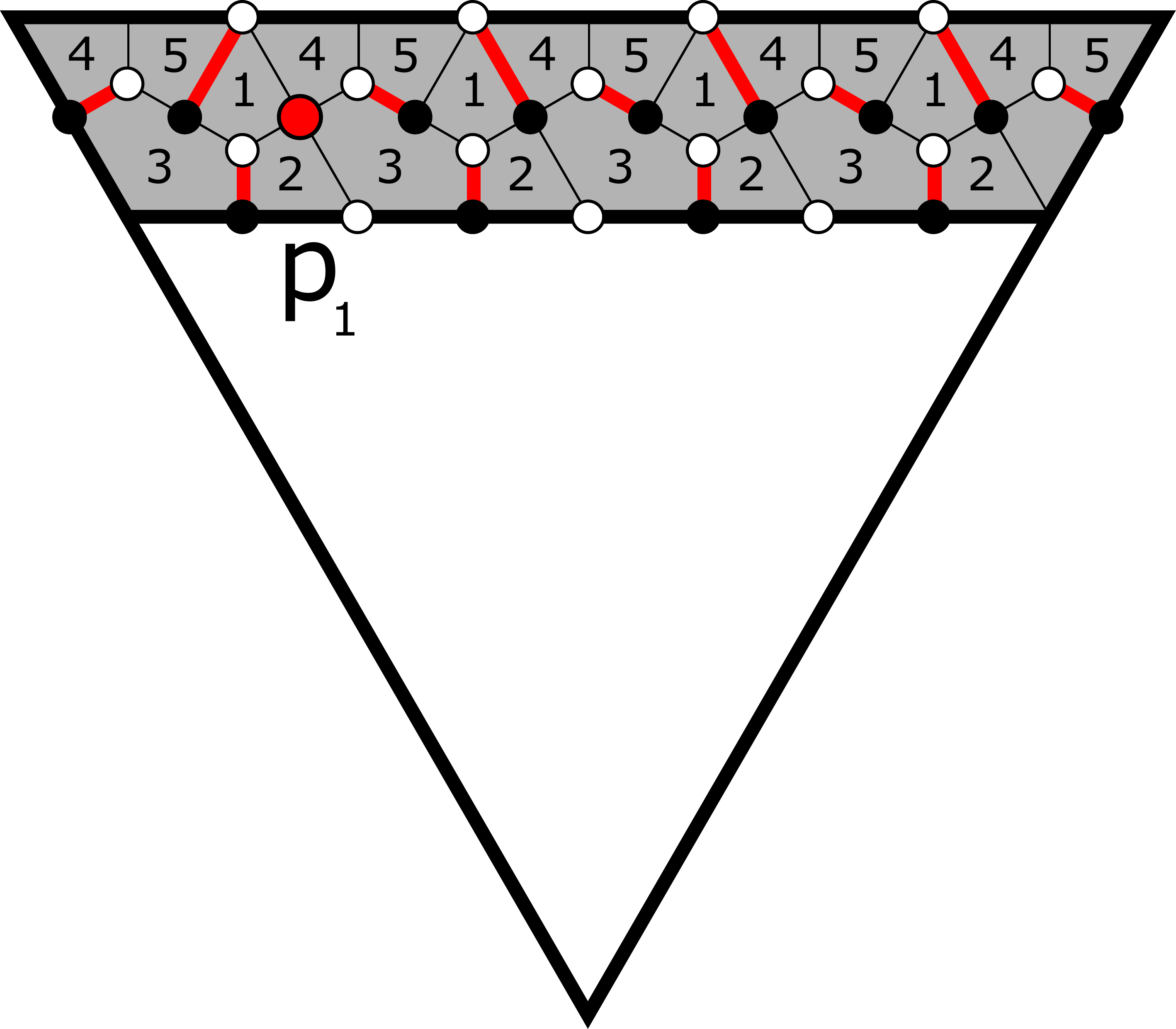}
\includegraphics[scale=0.1]{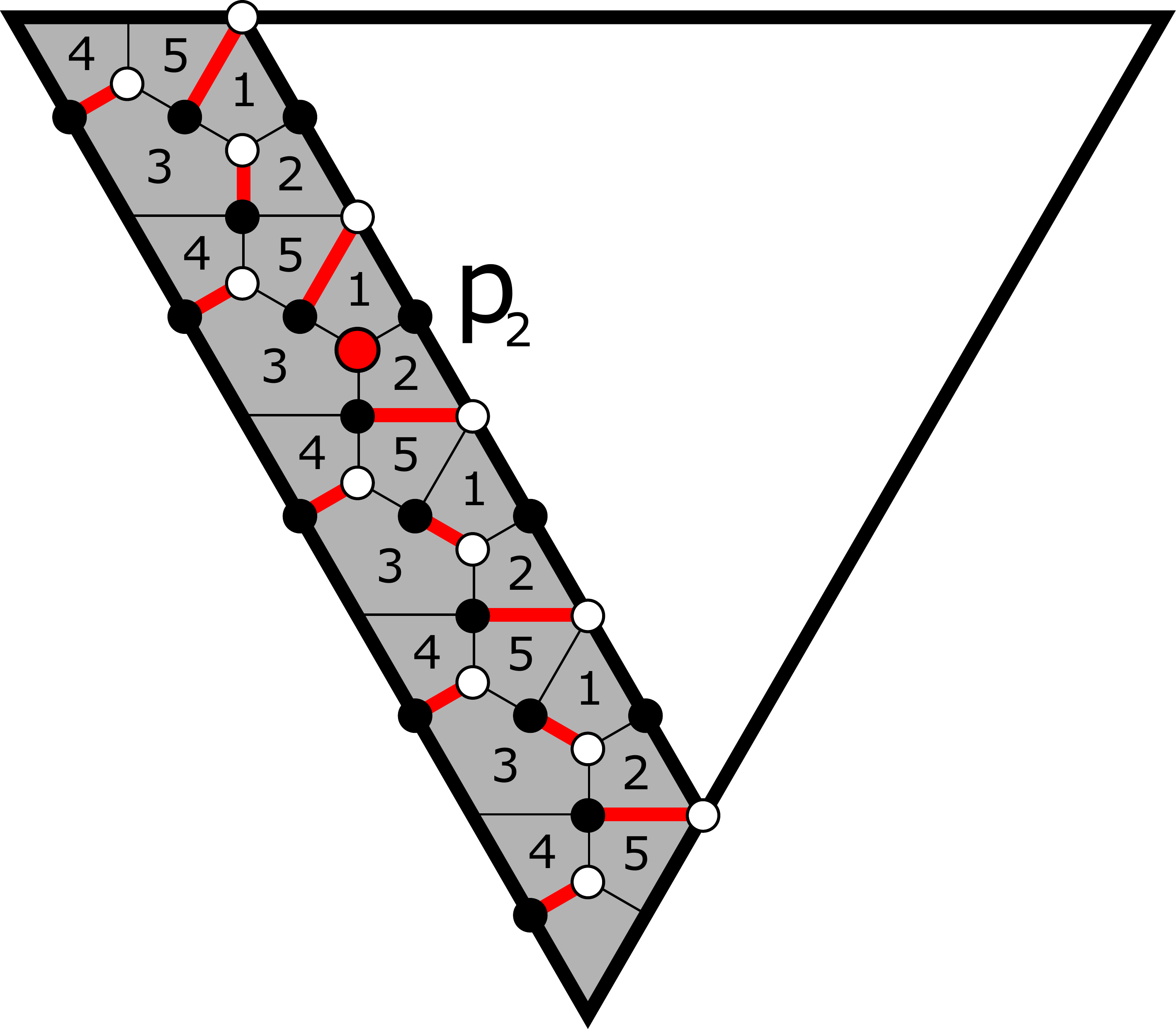}
\includegraphics[scale=0.1]{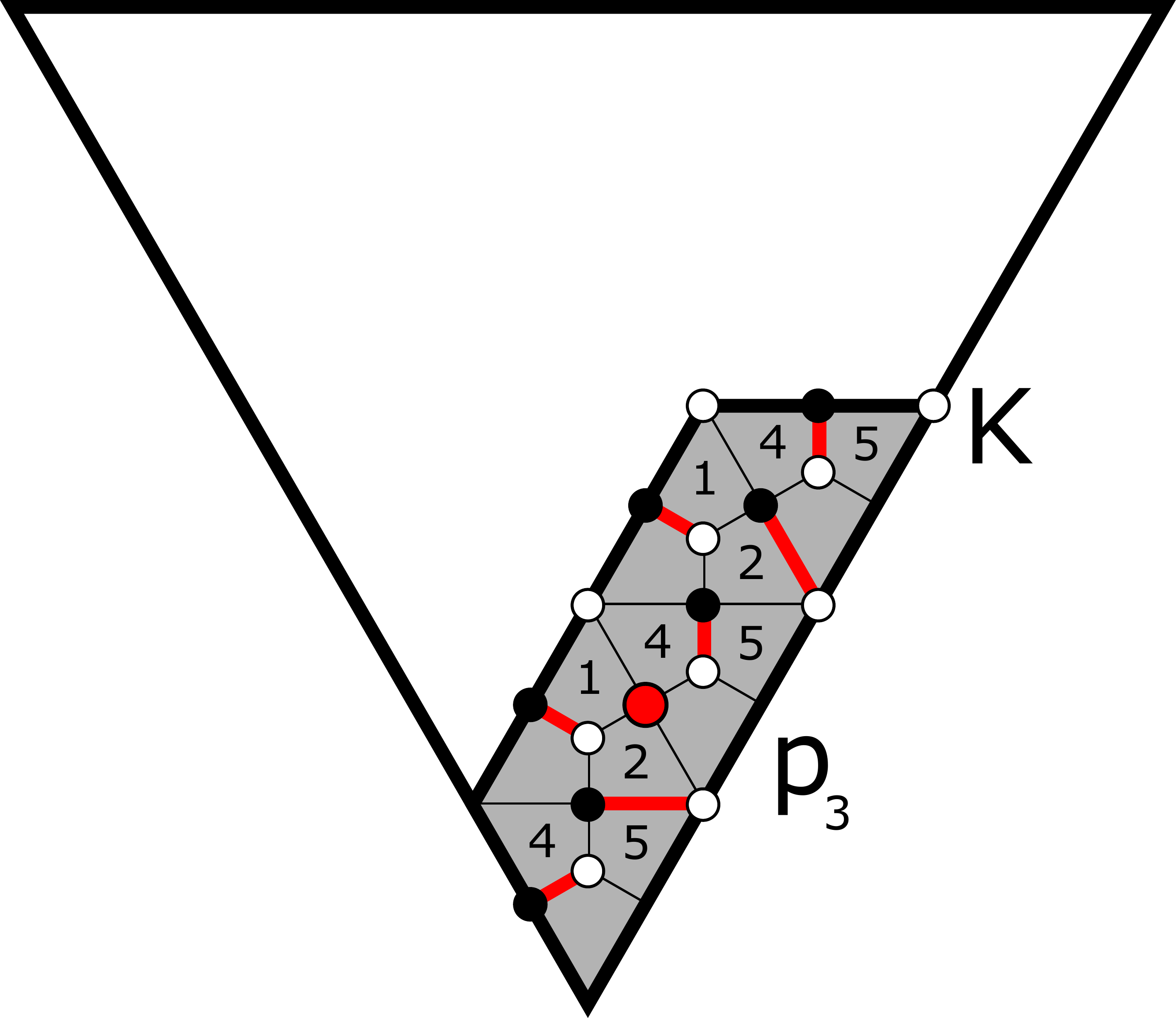}
\includegraphics[scale=0.1]{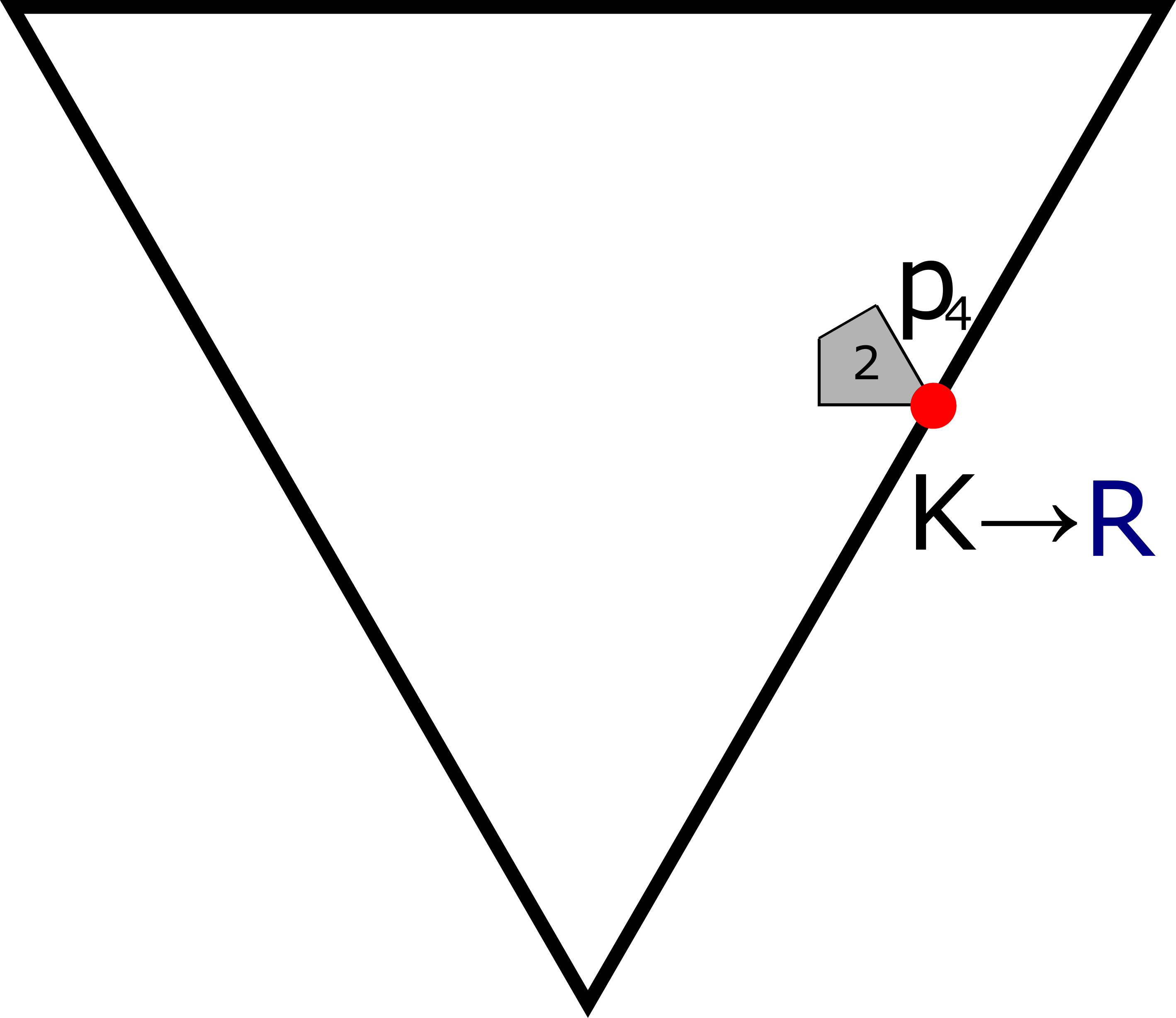}
\caption{Effects of removing points for $x_{2k-1}$, $k$ even.}
\label{fig:effect_base1_K}
\end{figure}

\begin{figure}[h!]
\includegraphics[scale=0.1]{somos_case1_remove_p1.pdf}
\includegraphics[scale=0.1]{somos_case1_remove_p2.pdf}
\includegraphics[scale=0.1]{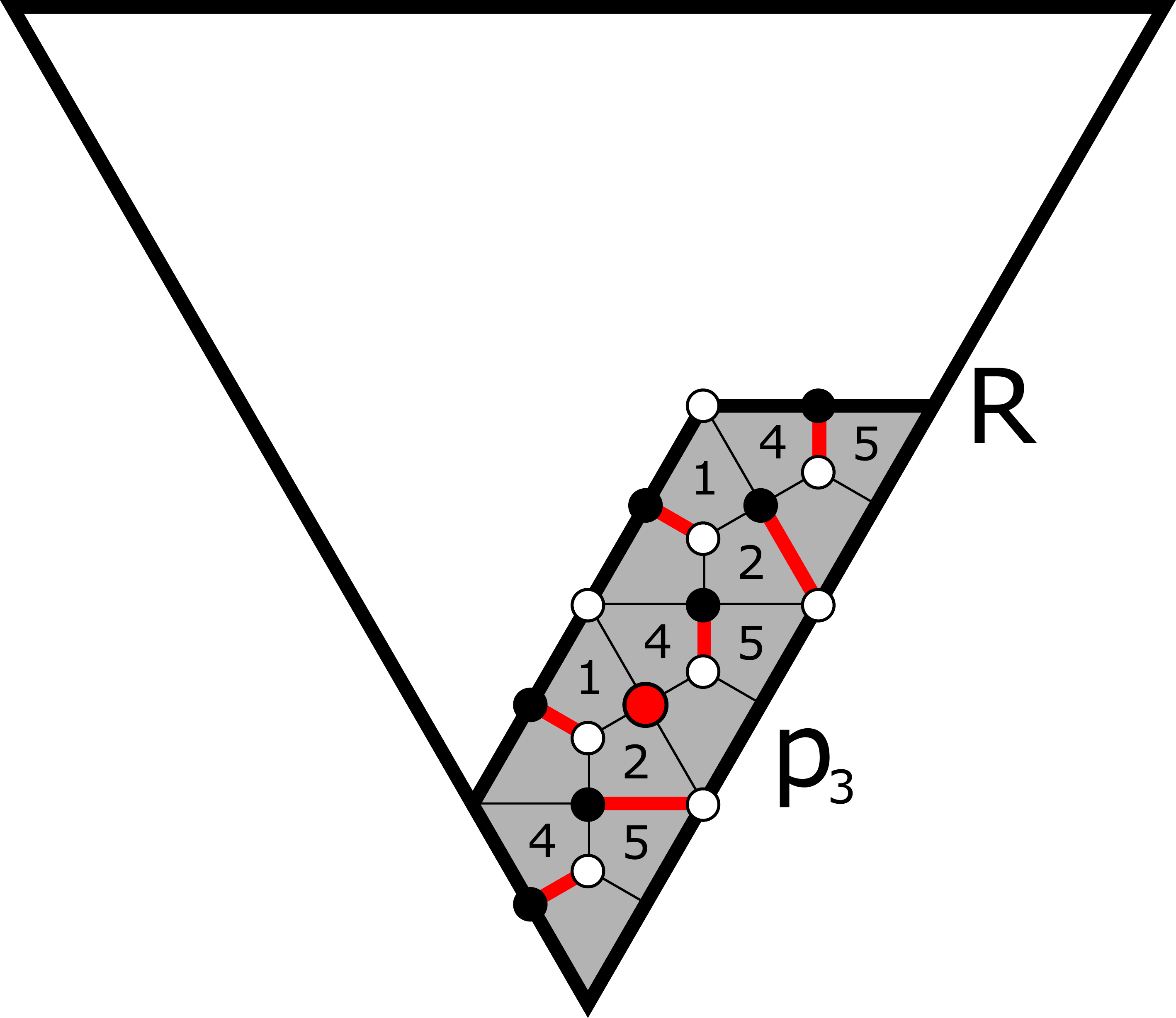}
\includegraphics[scale=0.1]{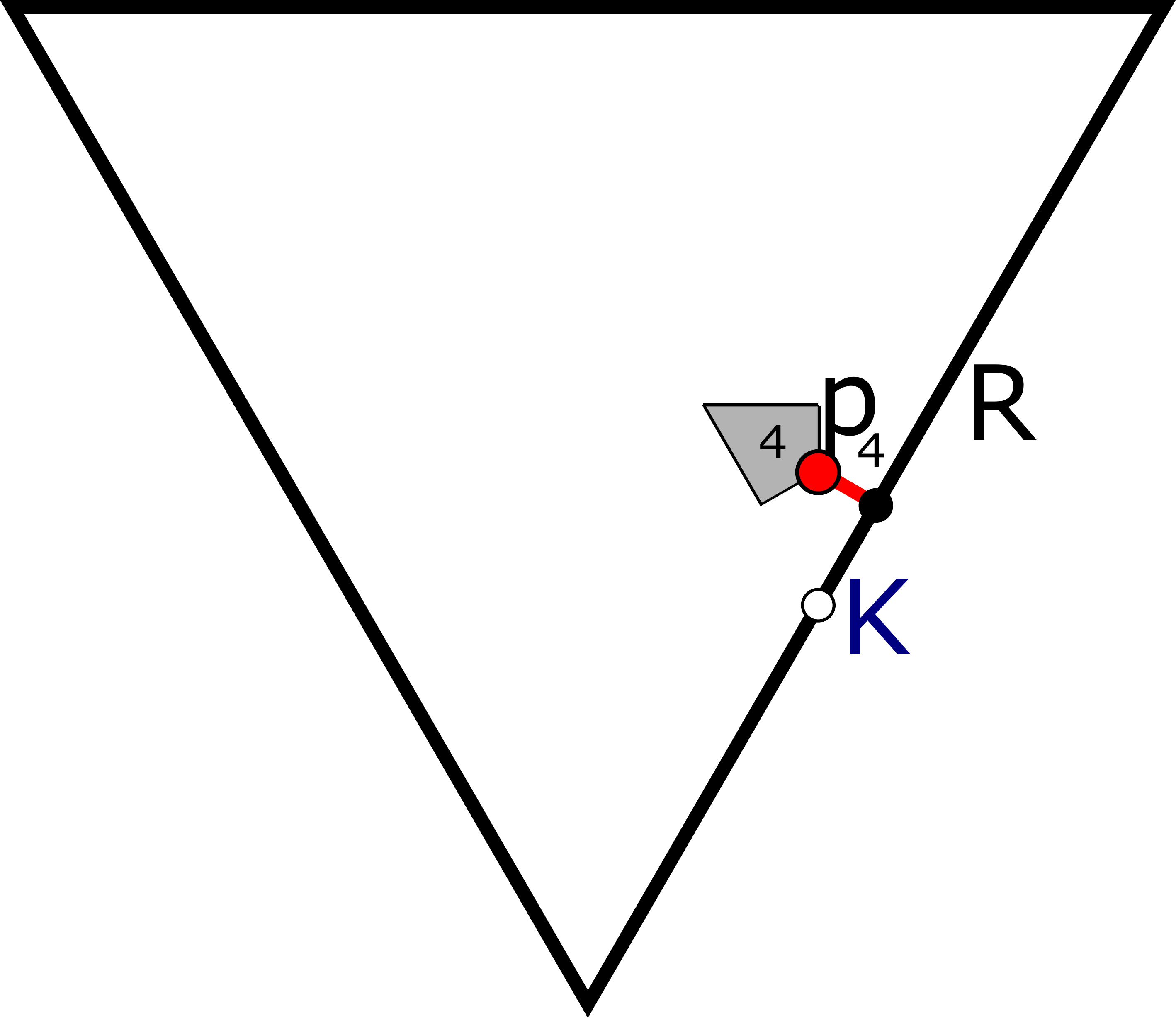}
\caption{Effects of removing points for $x_{2k-1}$, $k$ odd.}
\label{fig:effect_base1_R}
\end{figure}

Below, we explicitly write down the contour $C_i$ satisfying $\hat{\cal{G}(C)-S_i}=\hat{\cal{G}}(C_i)$ for each $S_i$, with the corresponding cluster variable, followed from induction hypothesis. Here, we omit the details of doing explicit calculation of adding and subtracting 1's.

\noindent\textbf{Subcase 1:} $k$ is even, i.e. $m\equiv3$ (mod 4). We have $C=(a,b,c,d,e)-K$. 
\begin{align*}
\hat{G-\{p_1,p_2,p_3,p_4\}}-K&= \hatcal{G}(a-3, b+2, 0-1, d-1, e+2)-R\\
&=\hatcal{G}(C_2),\ \text{graph of }x_{2k-6}\\
\hat{G-\{p_1,p_2\}}-K&= \hatcal{G}(a-2, b+1, 0, d-1, e+2)-K\\
&=\hatcal{G}(C_3),\ \text{graph of }x_{2k-5}\\
\hat{G-\{p_3,p_4\}}-K &= \hatcal{G}(a-1, b+1, c-1, d, e)-R\\
%&= x_{2k-2} \text{ with } T \text{ translated by } (0,0)\\
&=\hatcal{G}(C_4),\ \text{graph of }x_{2k-2}\\
%&= x_{2k-5} \text{ with } T \text{ translated by } (1,1)\\
\hat{G-\{p_1,p_4\}}-K
&= \hatcal{G}(a-1,b, c, d-1, e+1)-R\\ 
&=\hatcal{G}(C_5),\ \text{graph of }x_{2k-3}\\
\hat{G-\{p_2,p_3\}}-K&= \hatcal{G}(a-2,b+2, 0-1, d, e+1)-K\\
&=\hatcal{G}(C_6),\ \text{graph of }x_{2k-4}
\end{align*}

\noindent\textbf{Subcase 2:} $k$ is odd, i.e. $m\equiv1$ (mod 4). We have $C=(a,b,c,d,e)-R$. 
\begin{align*}
\hat{G-\{p_1,p_2,p_3,p_4\}}-R&= \hatcal{G}(a-3, b+3, c-1, d, e+2)-K\\
&=\hatcal{G}(C_2),\ \text{graph of }x_{2k-6}\\
\hat{G-\{p_1,p_2\}}-R&= \hatcal{G}(a-2, b+1, c, d-1, e+2)-R\\
&=\hatcal{G}(C_3),\ \text{graph of }x_{2k-5}\\
\hat{G-\{p_3,p_4\}}-R &= \hatcal{G}(a-1, b+2, c-1, d+1, e)-K\\
%&= x_{2k-2} \text{ with } T \text{ translated by } (0,0)\\
&=\hatcal{G}(C_4),\ \text{graph of }x_{2k-2}\\
%&= x_{2k-5} \text{ with } T \text{ translated by } (1,1)\\
\hat{G-\{p_1,p_4\}}-R
&= \hatcal{G}(a-1,b+1, c, d, e+1)-K\\ 
&=\hatcal{G}(C_5),\ \text{graph of }x_{2k-3}\\
\hat{G-\{p_2,p_3\}}-R&= \hatcal{G}(a-2,b+2, c-1, d, e+1)-R\\
&=\hatcal{G}(C_6),\ \text{graph of }x_{2k-4}
\end{align*}

By the Somos-5 recurrence $x_{2k-1}x_{2k-6} = x_{2k-5}x_{2k-2} + x_{2k-3}x_{2k-4}$ we conclude that $G = \hatcal{G}(C_1)$ is the graph of $x_{2k-1}$ (after verifying step 3).

\

\noindent\textbf{Step 3.} Now that we have all of the contours $C_i$, we specify the sets $T_i$ (defined in Section~\ref{sub:technique}) for a specific choice of $p_1,p_2,p_3,p_4$. 

$G-K$ (Special vertex kept): let $p_1$ be the rightmost (B) point on edge e (not in a forced matching), $p_2$ be the topmost (W) point on edge a (not in a forced matching), $p_3$ be the bottommost (B) point on edge b (in a forced matching), $p_4$ be the special vertex. See Figure~\ref{fig:cov-mon-base1-K}.
\begin{align*}
T(\emptyset) = 1, \quad T(\{p_1,p_2,p_3,p_4\}) = x_3x_3x_3x_4, \quad T(\{p_1,p_2\}) = x_3x_4, \\
T(\{p_3,p_4\}) = x_3x_3, \quad T(\{p_2,p_3\}) = x_3x_3, \quad T(\{p_1,p_4\}) = x_3x_4.
\end{align*}

\begin{figure}[h!]
\includegraphics[scale=0.11]{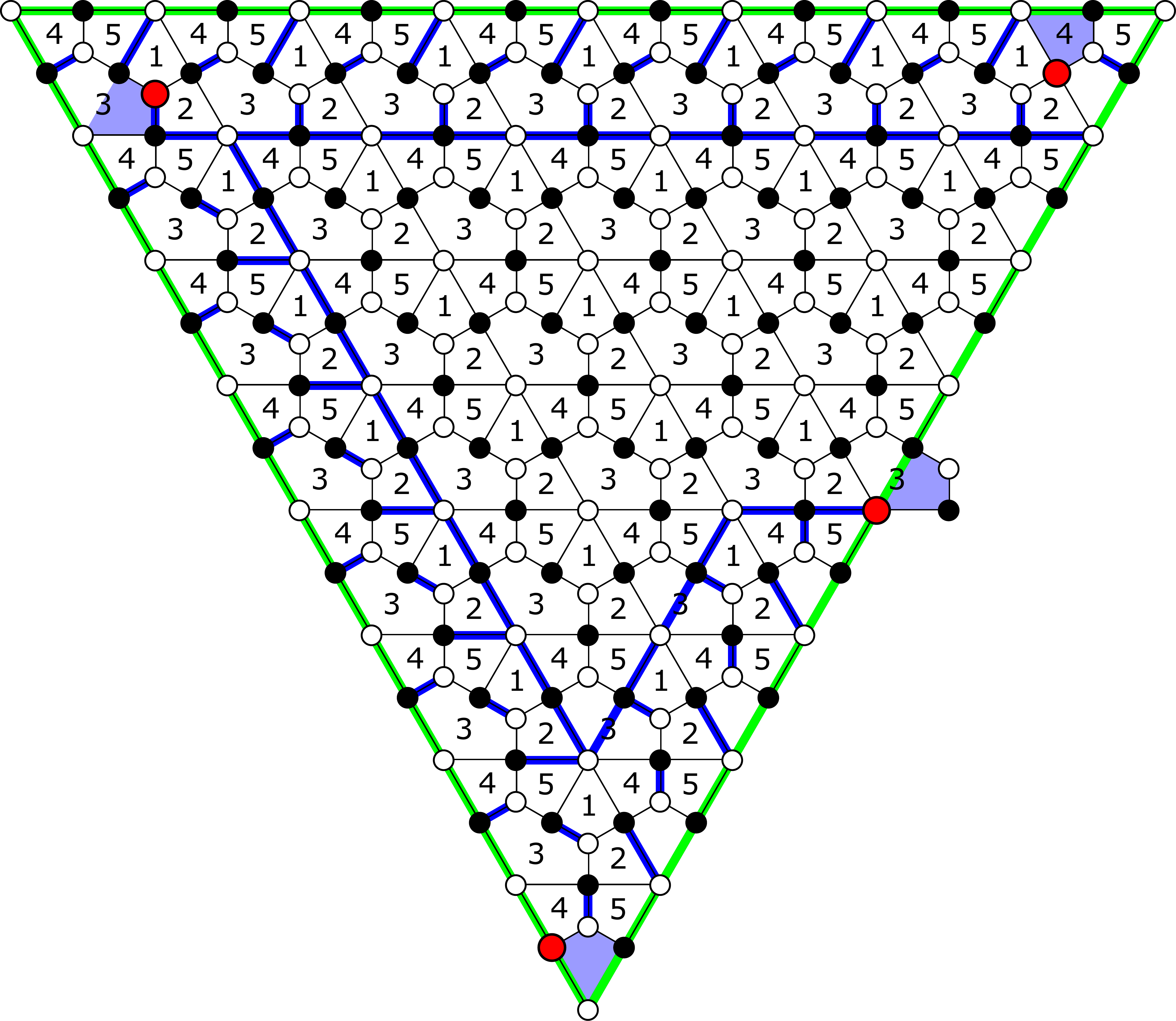}
\includegraphics[scale=0.11]{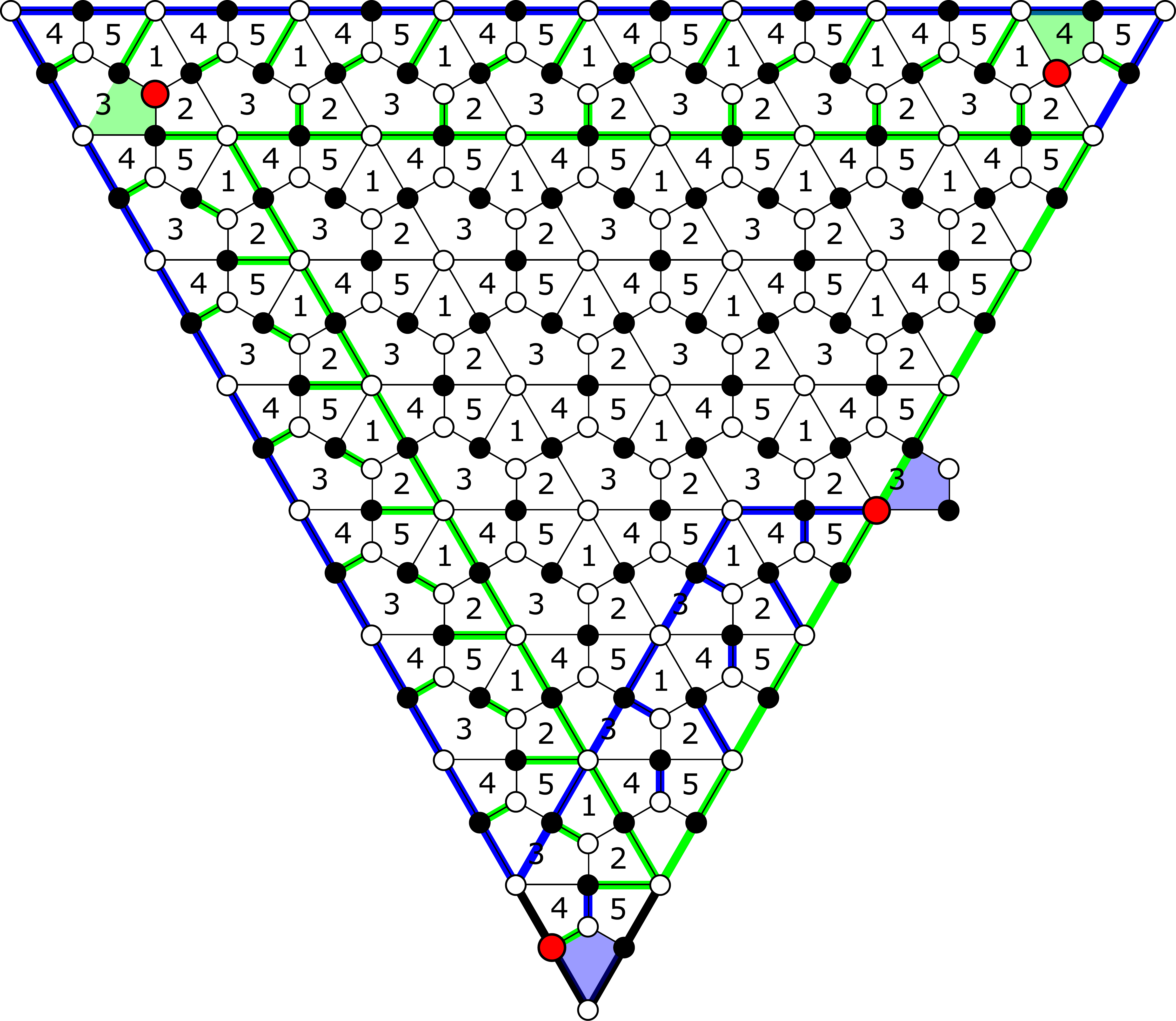}
\includegraphics[scale=0.11]{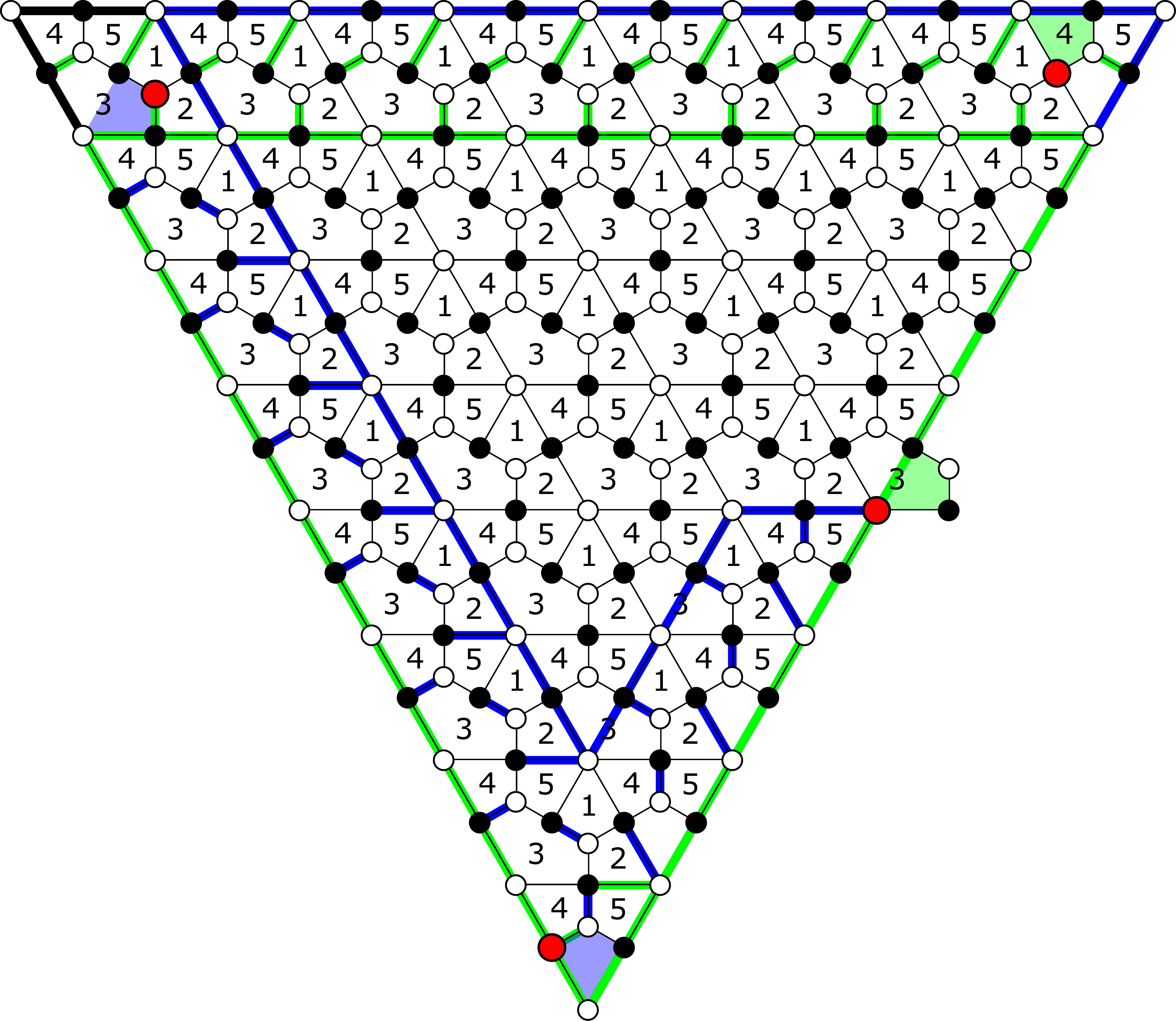}
\caption{Covering monomial for $x_{2k-1}$, $k$ even. Left: $T(\emptyset)$ and $T(\{p_1,p_2,p_3,p_4\})$. Middle: $T(\{p_1,p_2\})$ and $T(\{p_3,p_4\})$. Right $T(\{p_2,p_3\})$ and $T(\{p_1,p_4\})$.}
\label{fig:cov-mon-base1-K}
\end{figure}

$G-R$ (Special vertex removed): let $p_1$ be the rightmost (B) point on edge e (not in a forced matching), $p_2$ be the topmost (W) point on edge a (not in a forced matching), $p_3$ be the bottommost (B) point on edge b (in a forced matching), $p_4$ be the other white vertex on the 5-block below the special vertex. See Figure~\ref{fig:cov-mon-base1-R}.
\begin{align*}
T(\emptyset) = 1, \quad T(\{p_1,p_2,p_3,p_4\}) = x_3x_3x_3x_4, \quad T(\{p_1,p_2\}) = x_3x_4, \\
T(\{p_3,p_4\}) = x_3x_5, \quad T(\{p_2,p_3\}) = x_3x_3, \quad T(\{p_1,p_4\}) = x_4x_4.
\end{align*}

\begin{figure}[h!]
\includegraphics[scale=0.11]{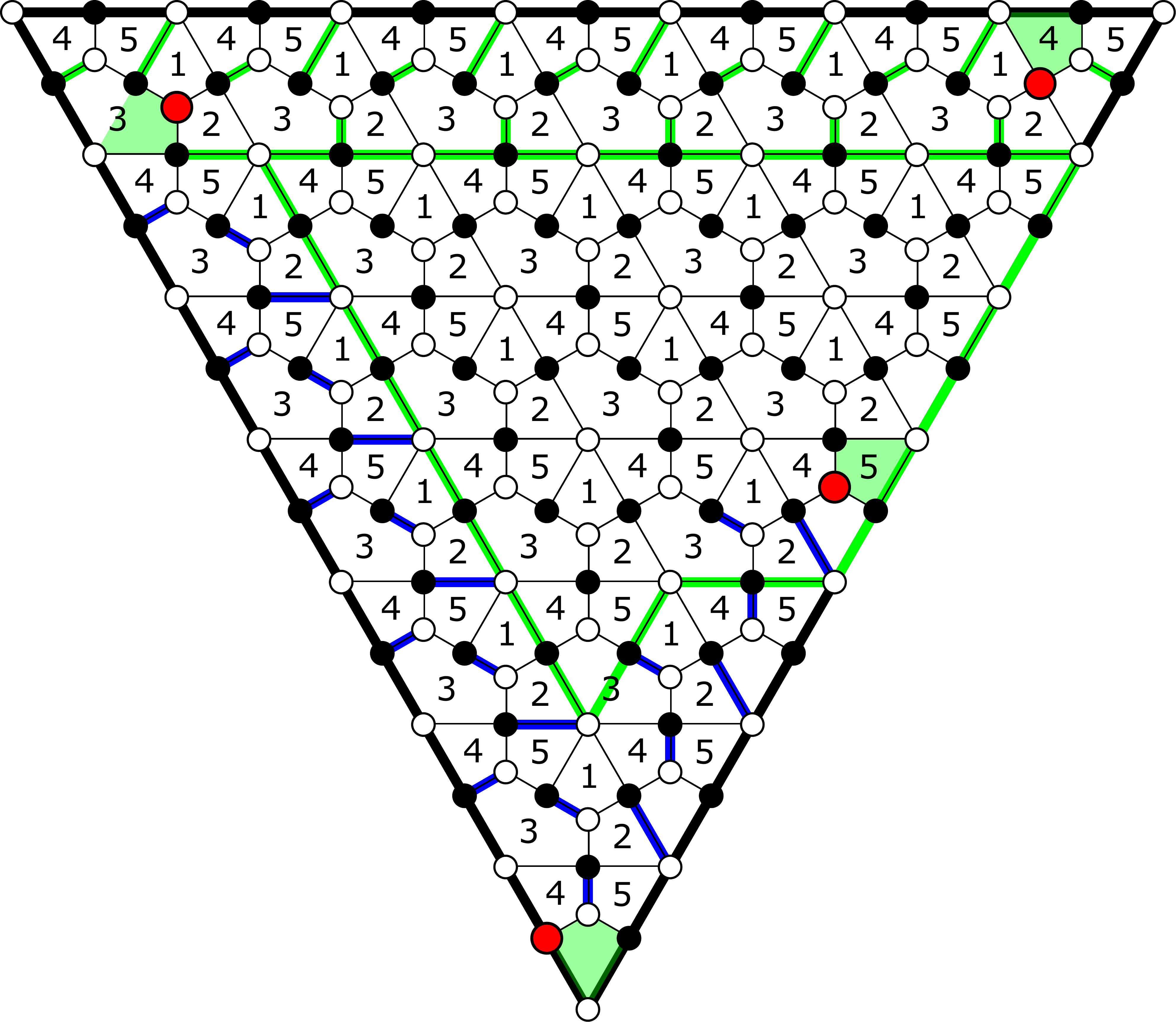}
\includegraphics[scale=0.11]{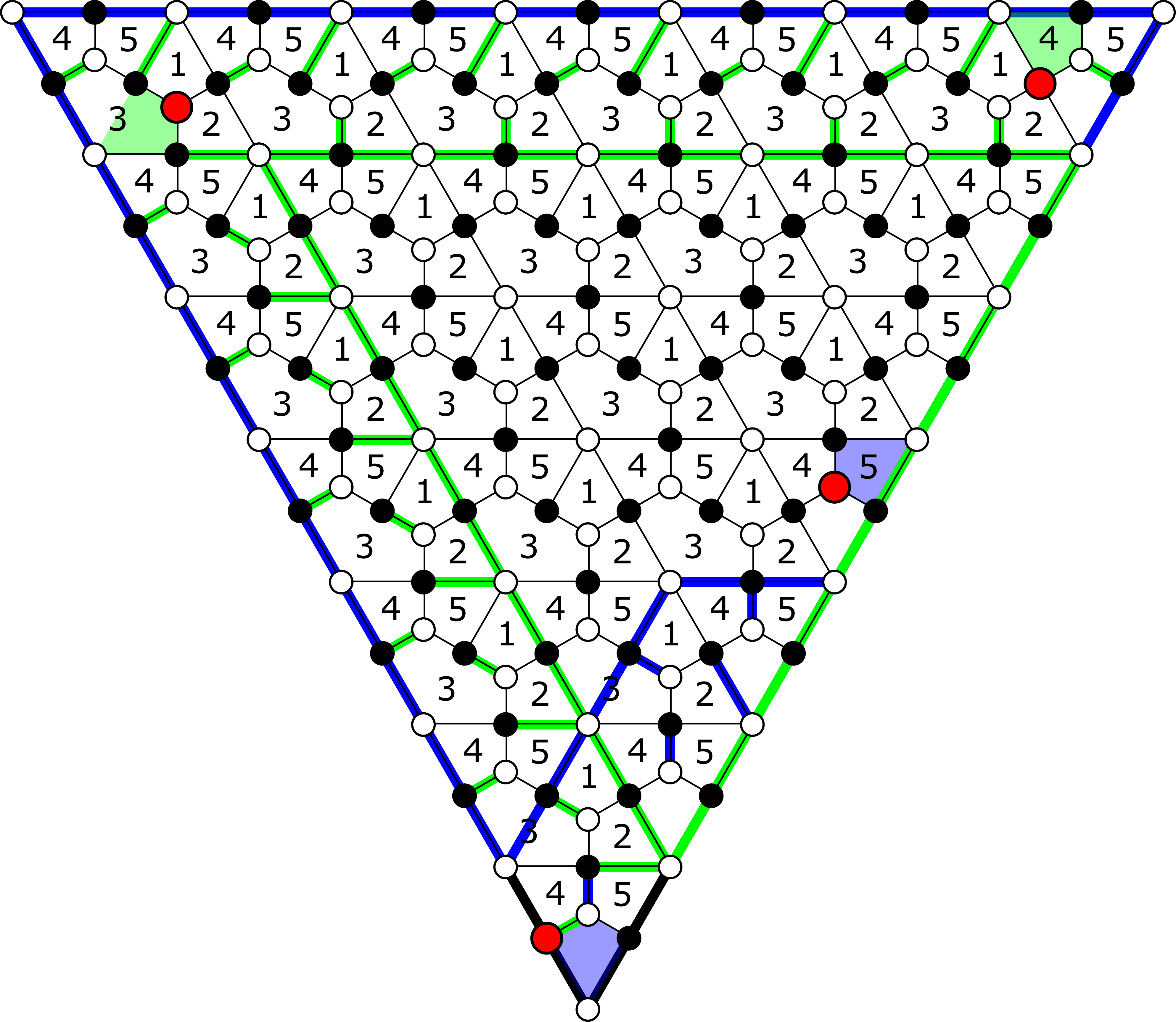}
\includegraphics[scale=0.11]{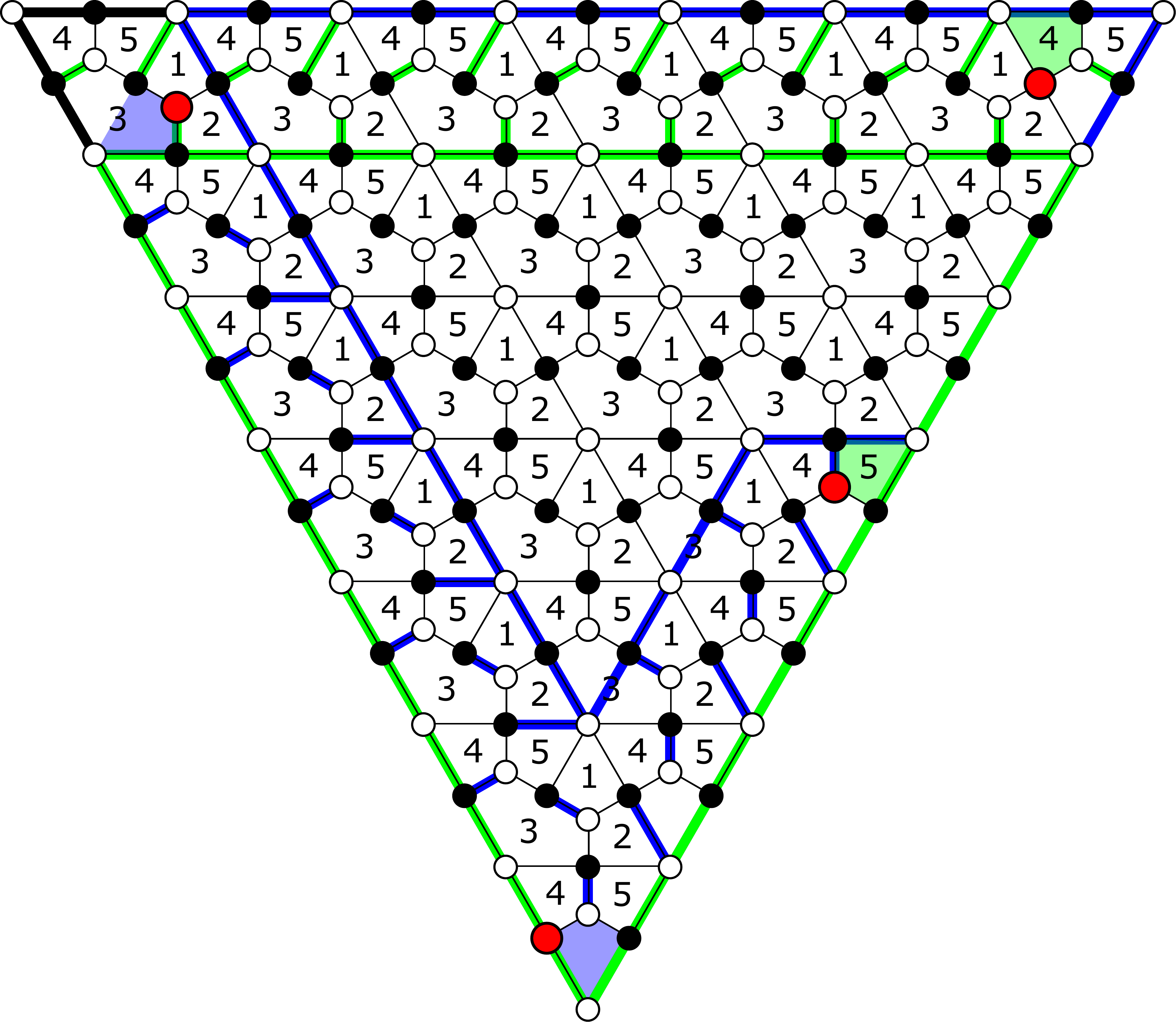}
\caption{Covering monomial for $x_{2k-1}$, $k$ even. Left: $T(\emptyset)$ and $T(\{p_1,p_2,p_3,p_4\})$. Middle: $T(\{p_1,p_2\})$ and $T(\{p_3,p_4\})$. Right $T(\{p_2,p_3\})$ and $T(\{p_1,p_4\})$.}
\label{fig:cov-mon-base1-R}
\end{figure}

%todo: Figures for T_is

We see that equation~\ref{eqn:T_i} is satisfied: $$\left(\prod_{j\in T_1}x_j\right)\left(\prod_{j\in T_2}x_j\right)=\left(\prod_{j\in T_3}x_j\right)\left(\prod_{j\in T_4}x_j\right)=\left(\prod_{j\in T_5}x_j\right)\left(\prod_{j\in T_6}x_j\right).$$

Finally, we conclude that $c(\hatcal{G}(C_1))$ is the Laurent polynomial of $x_{2k-1}$, as desired.

\

\noindent\textbf{Case 2:} $m = 2k$. Consider the following contour
\begin{align*}
C=(a,b,c,d,e)
=\left(k-2, -\left\lceil \frac{k-4}{2} \right\rceil, -1, \left\lfloor \frac{k}{2} \right\rfloor, 1-k\right).
\end{align*}
Since $k \geq 3$, we have $a > 0, b \ge 0, d>0$ and $e < 0$. The proof is similar to the first case. In Step 1 we again use balanced Kuo's condensation on $\cal{G}(C)$ and use the same notation for each $S_i$. In Step 2 we define the four points as follows.
\begin{itemize}
\item Let $p_1$ be any white point on edge $a$.
\item Let $p_2$ be any black point on edge $e$.
\item Let $p_3$ be any white point on edge $d$.
\item Let $p_4$ be a black point near edge $c$ on edge $d$ defined as follows:
\begin{itemize}
\item If $k \equiv 0$ (mod 2), then $a\equiv 0$ (mod 2) so the special point is kept.  Let $p_4$ be the black point on the edge between the 4-block and 5-block above the special point.
\item If $k \equiv 1$ (mod 2), then $a\not \equiv 0$ (mod 2) so the special point is removed.  Let $p_4$ be the lowest black point on edge $d$.
\end{itemize}
\end{itemize}

We also give the effects of removing each point separately:
\begin{itemize}
\item The effect of removing $p_1$ is $(a,b,c,d,e)\rightarrow (a-1, b+1, c, d, e+1)$.
\item The effect of removing $p_2$ is $(a,b,c,d,e) \rightarrow (a-1, b, c, d-1, e+1)$.
\item The effect of removing $p_3$ is $(a,b,c,d,e)-K\rightarrow(a,b-1,c+1,d-1,e+1)-R$ and $(a,b,c,d,e)-R\rightarrow(a,b,c+1,d,e+1)-K$ depending on the parity of $k$. 
% Changed - this is weird. 
%\item The effect of removing $p_3$ is $(a,b,c,d,e)-K\rightarrow(a,b-1,c+1,d-1,0)-R$ and $(a,b,c,d,e)-R\rightarrow(a,b,c+1,d,e)-K$ depending on the parity of $k$. 
\item The effect of removing $p_4$ is $(a,b,c,d,e)-K \rightarrow (a,b-1,c,d-1,e)-R$ and $(a,b,c,d,e)-R \rightarrow (a,b,c,d,e)-K$ depending on the parity of $k$.
\end{itemize}

The position of each point and the effect of removing each point is shown in Figure~\ref{fig:effect_base2_K} (special point kept) and Figure~\ref{fig:effect_base2_R} (special point removed).

\begin{figure}[h!]
\includegraphics[scale=0.1]{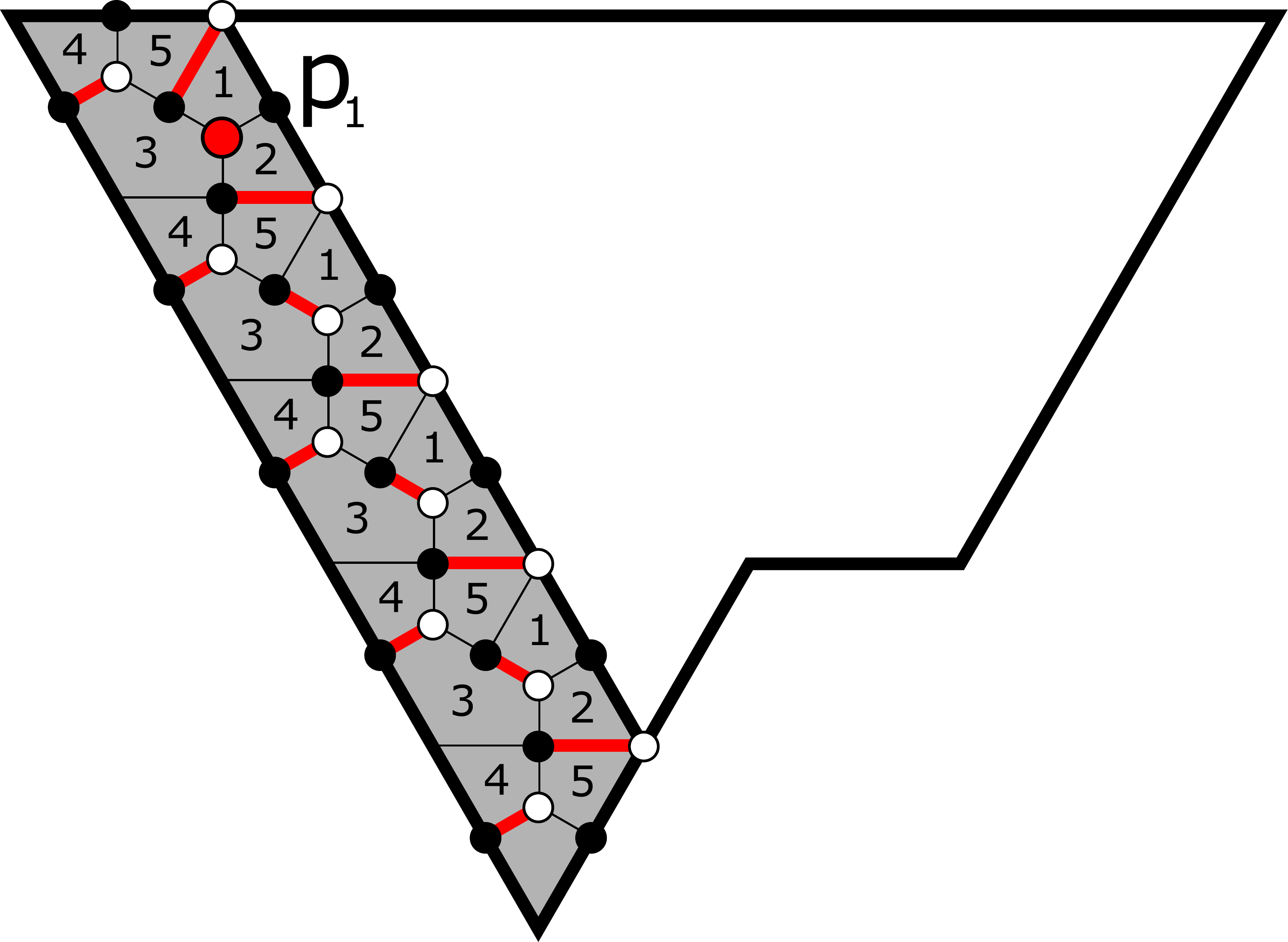}
\includegraphics[scale=0.1]{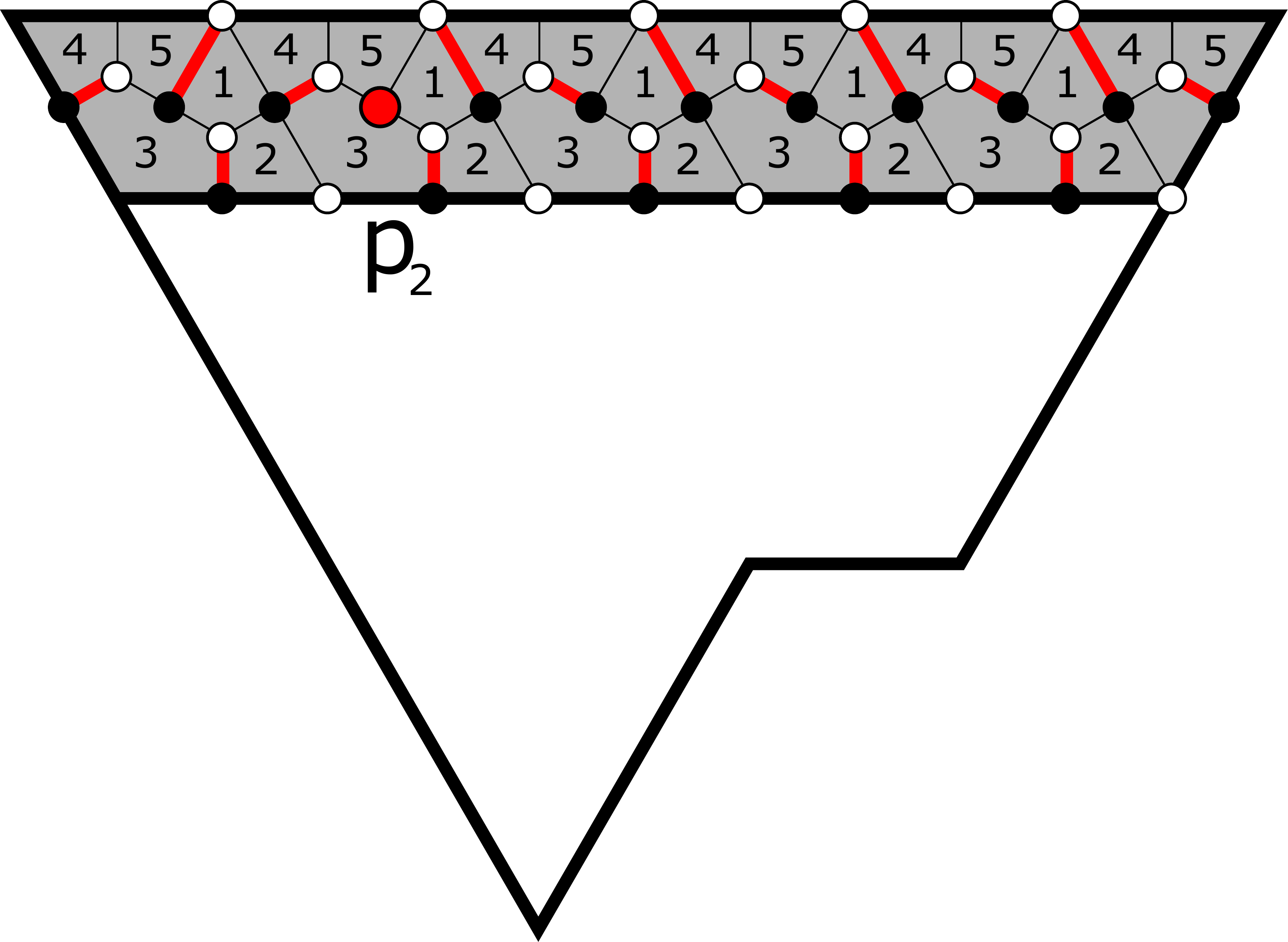}
\includegraphics[scale=0.1]{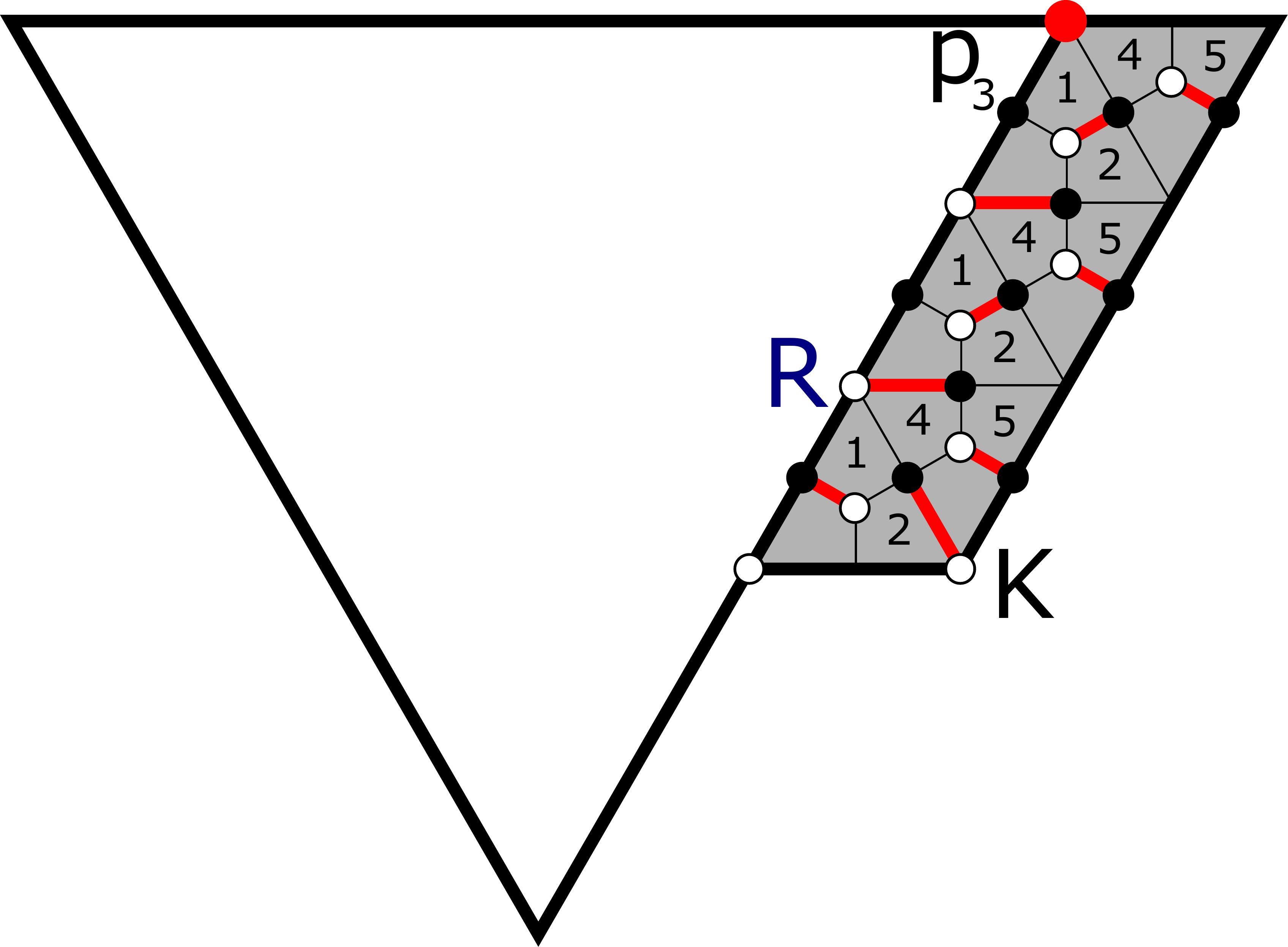}
\includegraphics[scale=0.1]{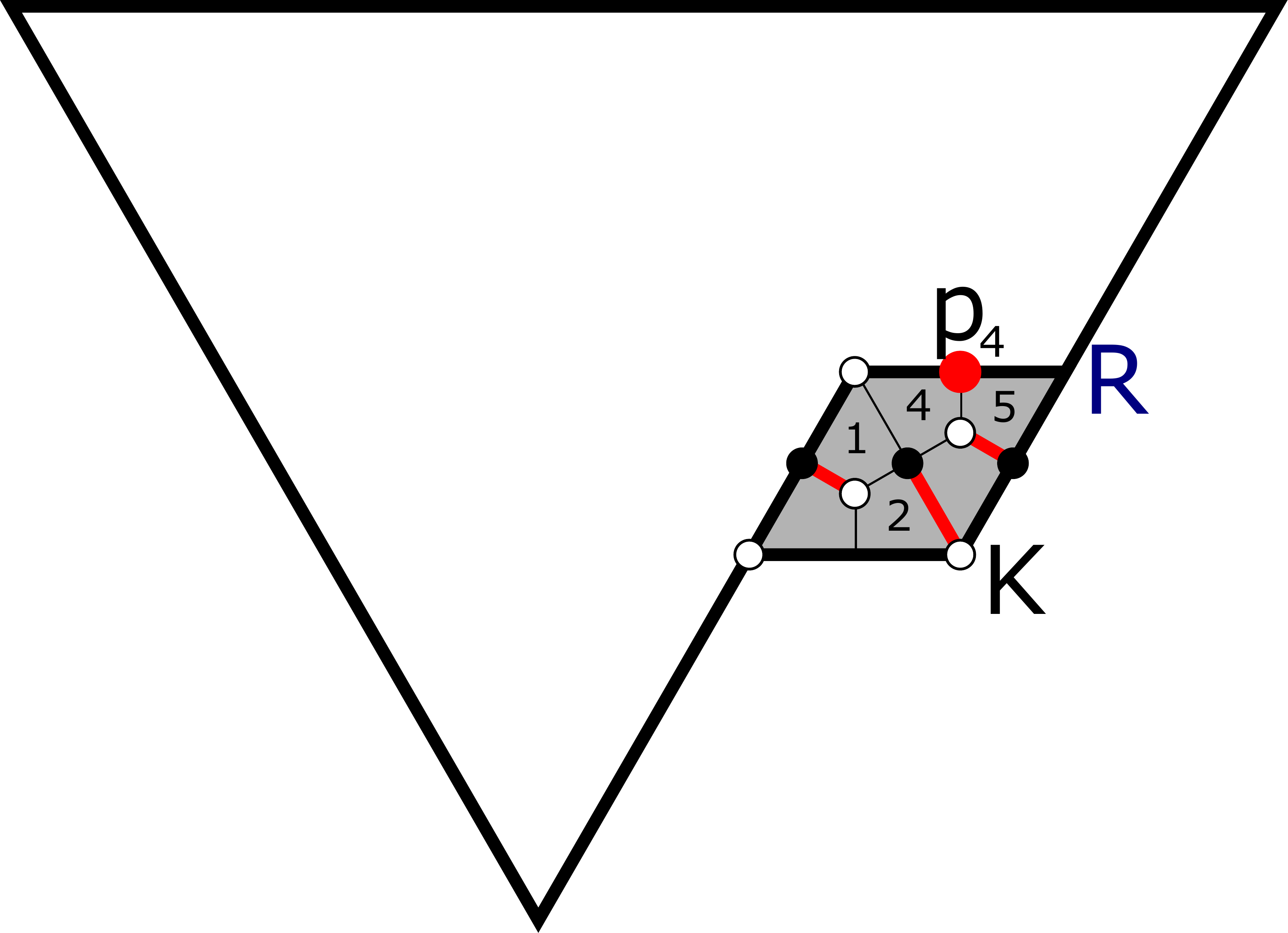}
\caption{Effects of removing points for $x_{2k}$, $k$ even.}
\label{fig:effect_base2_K}
\end{figure}

\begin{figure}[h!]
\includegraphics[scale=0.1]{somos_case2_remove_p1.pdf}
\includegraphics[scale=0.1]{somos_case2_remove_p2.pdf}
\includegraphics[scale=0.1]{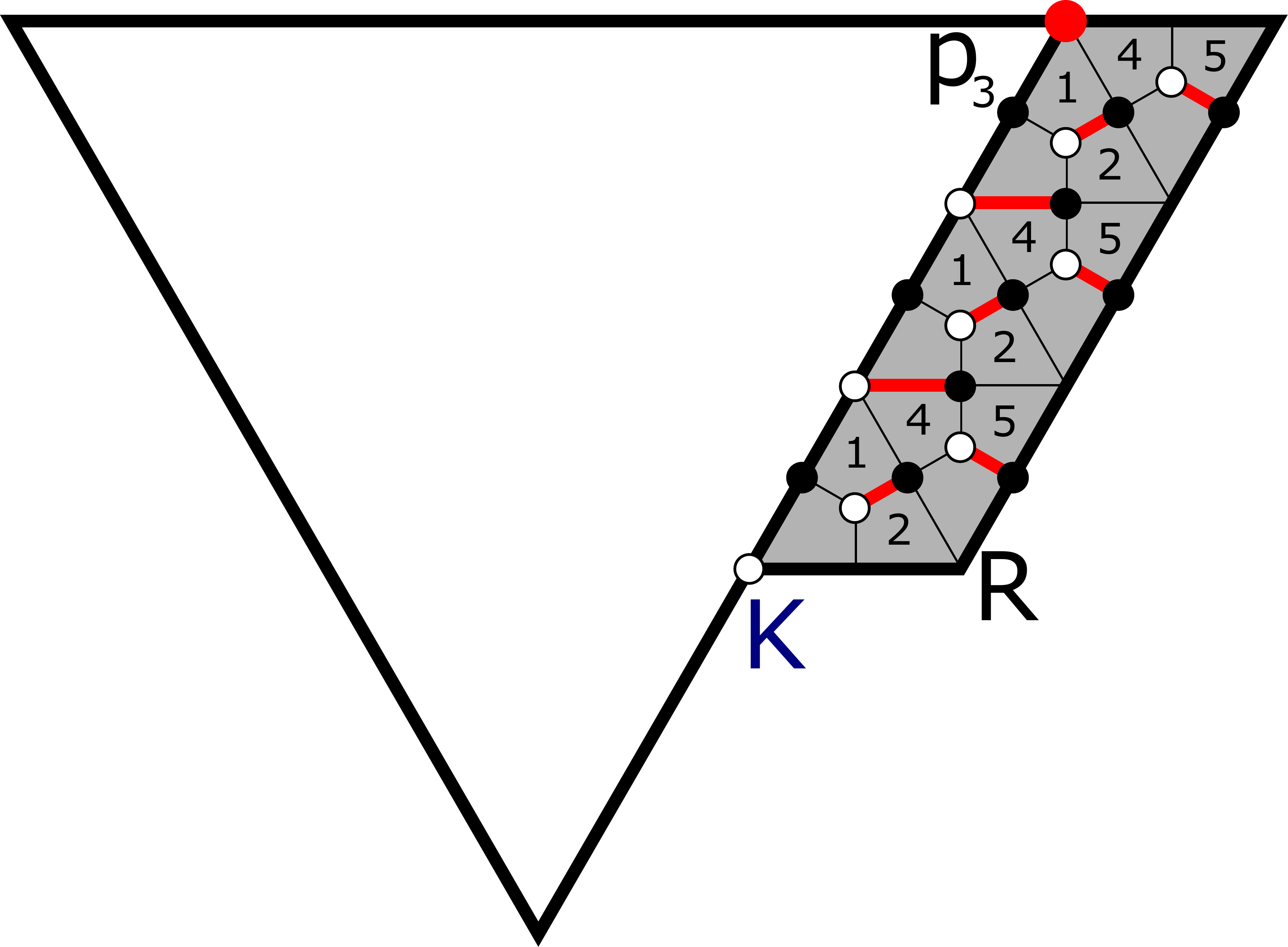}
\includegraphics[scale=0.1]{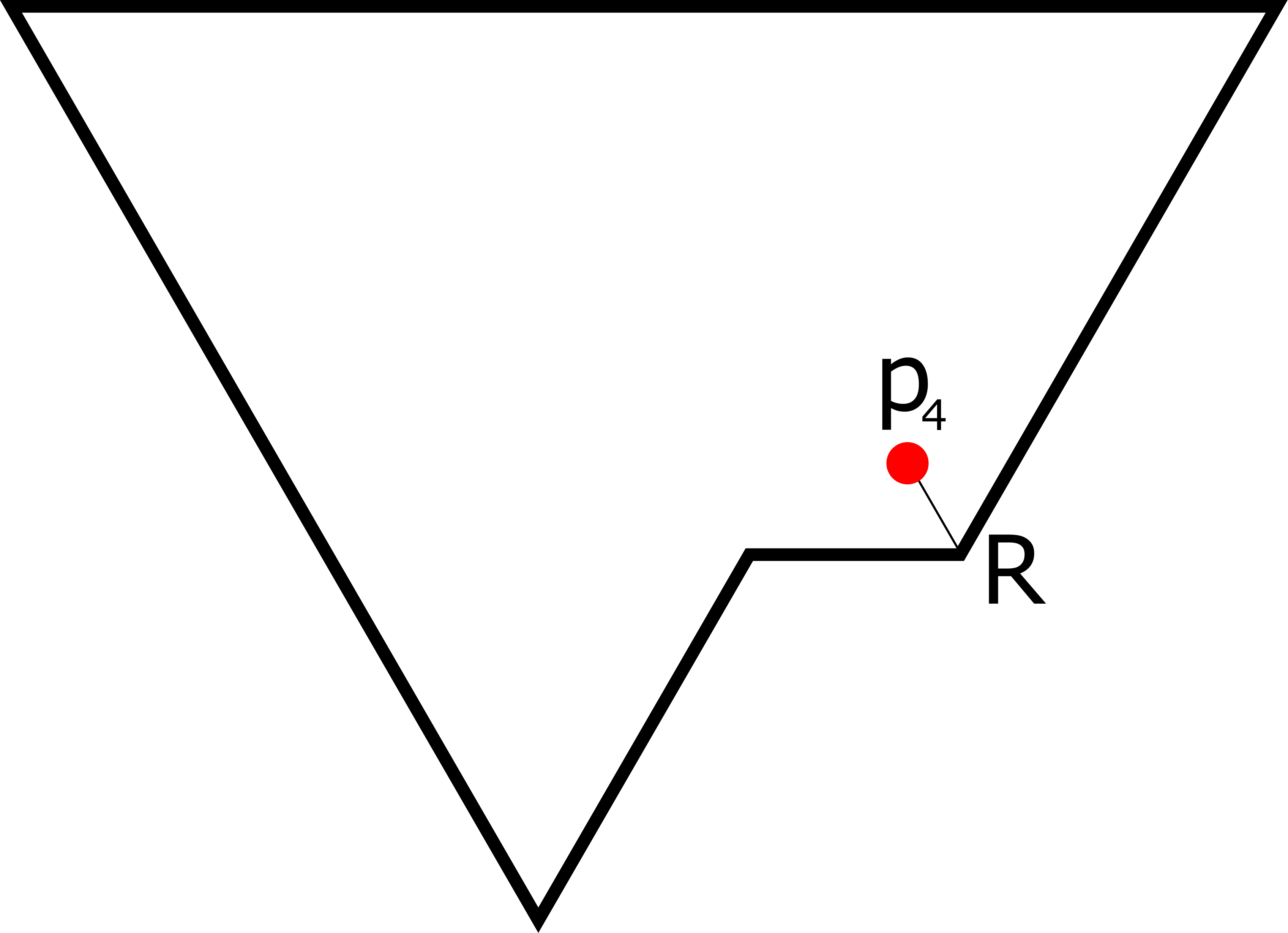}
\caption{Effects of removing points for $x_{2k}$, $k$ odd.}
\label{fig:effect_base2_R}
\end{figure}

Below, we explicitly write down the contour $C_i$ satisfying $\hat{\cal{G}(C)-S_i}=\hat{\cal{G}}(C_i)$ for each $S_i$, with the corresponding cluster variable (after verifying step 3).

\noindent\textbf{Subcase 1:} $k$ is even, i.e. $m\equiv0$ (mod 4). We have $C=(a,b,c,d,e)-K$. 
\begin{align*}
\hat{G-\{p_1,p_2,p_3,p_4\}}-K &= \hatcal{G}(a-2, b, c+1, d-2, e+3)-K\\ 
%&= x_{2k-5} \text{ with } T \text{ translated by } (1,1)\\
&=\hatcal{G}(C_2),\text{ graph of }x_{2k-5}\\
\hat{G-\{p_1,p_2\}}-K &= \hatcal{G}(a-2, b+1, c, d-1, e+2)-K\\
%&= x_{2k-4} \text{ with } T \text{ translated by } (1,1)\\
&=\hatcal{G}(C_3),\text{ graph of }x_{2k-4}\\
\hat{G-\{p_3,p_4\}}-K &= \hatcal{G}(a, b-1, c+1, d-1, e+1)-K\\
&=\hatcal{G}(C_4),\text{ graph of }x_{2k-1}\\
\hat{G-\{p_1,p_4\}}-K &= \hatcal{G}(a-1,b-1, c+1, d-2, e+2)-R\\
%&= x_{2k-3} \text{ with } T \text{ translated by } (1,0)\\
&=\hatcal{G}(C_5),\text{ graph of }x_{2k-3}\\
\hat{G-\{p_2,p_3\}}-K &= \hatcal{G}(a-1,b, c, d-1, e+1)-R\\
%&= x_{2k-2} \text{ with } T \text{ translated by } (0,1)\\
&=\hatcal{G}(C_6),\text{ graph of }x_{2k-2}
\end{align*}

\noindent\textbf{Subcase 2:} $k$ is odd, i.e. $m\equiv2$ (mod 4). We have $C=(a,b,c,d,e)-R$. 
\begin{align*}
\hat{G-\{p_1,p_2,p_3,p_4\}}-R &= \hatcal{G}(a-2, b, c+1, d-2, e+3)-R\\ 
%&= x_{2k-5} \text{ with } T \text{ translated by } (1,1)\\
&=\hatcal{G}(C_2),\text{ graph of }x_{2k-5}\\
\hat{G-\{p_1,p_2\}}-R &= \hatcal{G}(a-2, b+1, c, d-1, e+2)-R\\
%&= x_{2k-4} \text{ with } T \text{ translated by } (1,1)\\
&=\hatcal{G}(C_3),\text{ graph of }x_{2k-4}\\
\hat{G-\{p_3,p_4\}}-R &= \hatcal{G}(a, b-1, c+1, d-1, e+1)-R\\
&=\hatcal{G}(C_4),\text{ graph of }x_{2k-1}\\
\hat{G-\{p_1,p_4\}}-R &= \hatcal{G}(a-1,b, c+1, d-1, e+2)-K\\
%&= x_{2k-3} \text{ with } T \text{ translated by } (1,0)\\
&=\hatcal{G}(C_5),\text{ graph of }x_{2k-3}\\
\hat{G-\{p_2,p_3\}}-R &= \hatcal{G}(a-1,b+1, c, d, e+1)-K\\
%&= x_{2k-2} \text{ with } T \text{ translated by } (0,1)\\
&=\hatcal{G}(C_6),\text{ graph of }x_{2k-2}
\end{align*}

By the Somos-5 recurrence $x_{2k}x_{2k-5} = x_{2k-4}x_{2k-1} + x_{2k-2}x_{2k-3}$ we conclude that $\hatcal{G}(C_1)$ is the graph of $x_{2k}$.

In Step 3 we specify the sets $T_i$ and verify equation~\ref{eqn:T_i}.

$G-K$ (Special vertex kept): let $p_1$ be the bottommost (W) point on edge a (not in a forced matching), $p_2$ be the leftmost (B) point on edge e (not in a forced matching), $p_3$ be the topmost (W) point on edge d, $p_4$ be the (B) point on the edge between the 4-block and 5-block above the special vertex. See Figure~\ref{fig:cov-mon-base2-K}.
\begin{align*}
T(\emptyset) = 1, \quad T(\{p_1,p_2,p_3,p_4\}) = x_3x_4x_4x_5, \quad T(\{p_1,p_2\}) = x_3x_5, \\
T(\{p_3,p_4\}) = x_4x_4, \quad T(\{p_2,p_3\}) = x_4x_5, \quad T(\{p_1,p_4\}) = x_3x_4.
\end{align*}

\begin{figure}[h!]
\includegraphics[scale=0.1]{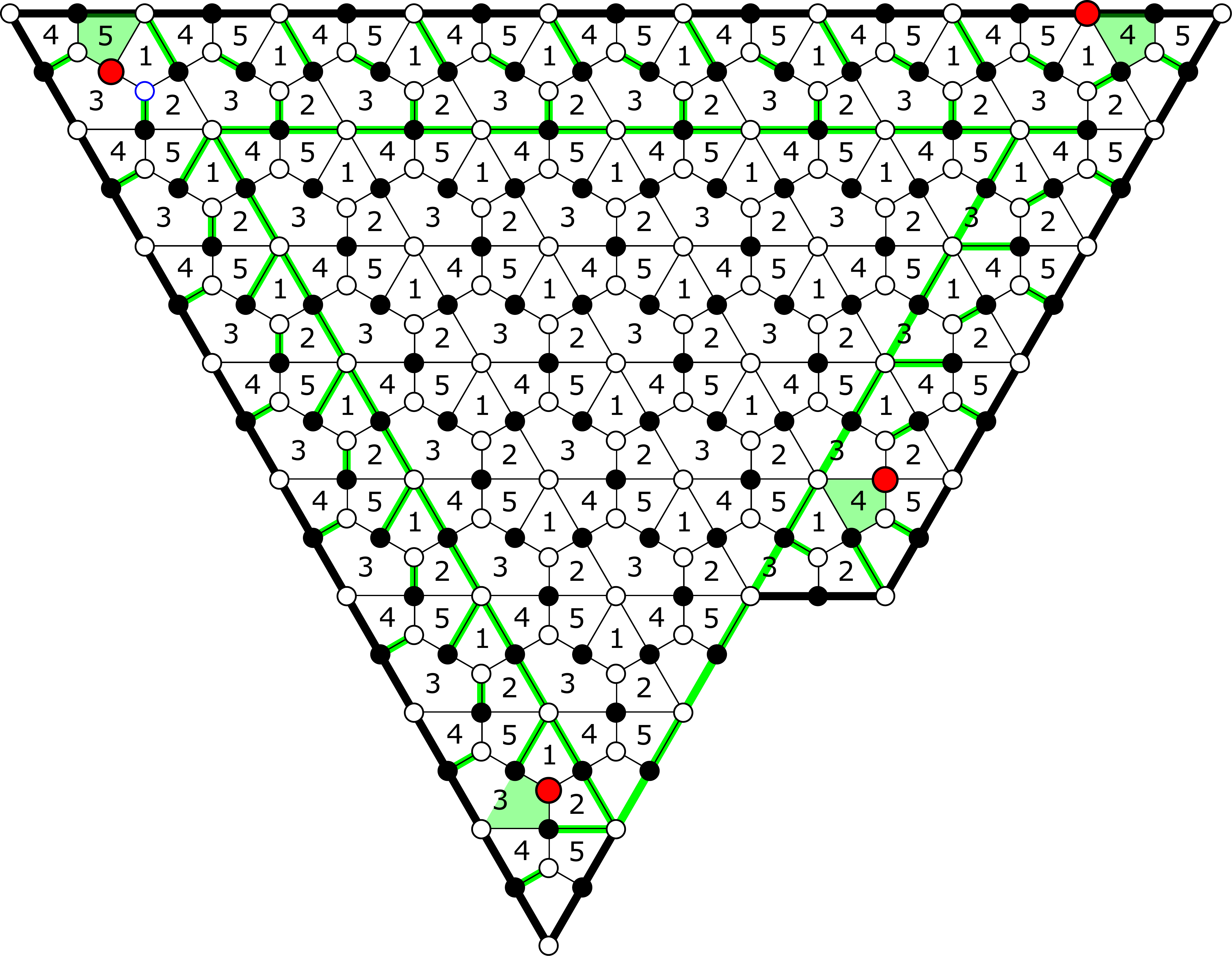}
\includegraphics[scale=0.1]{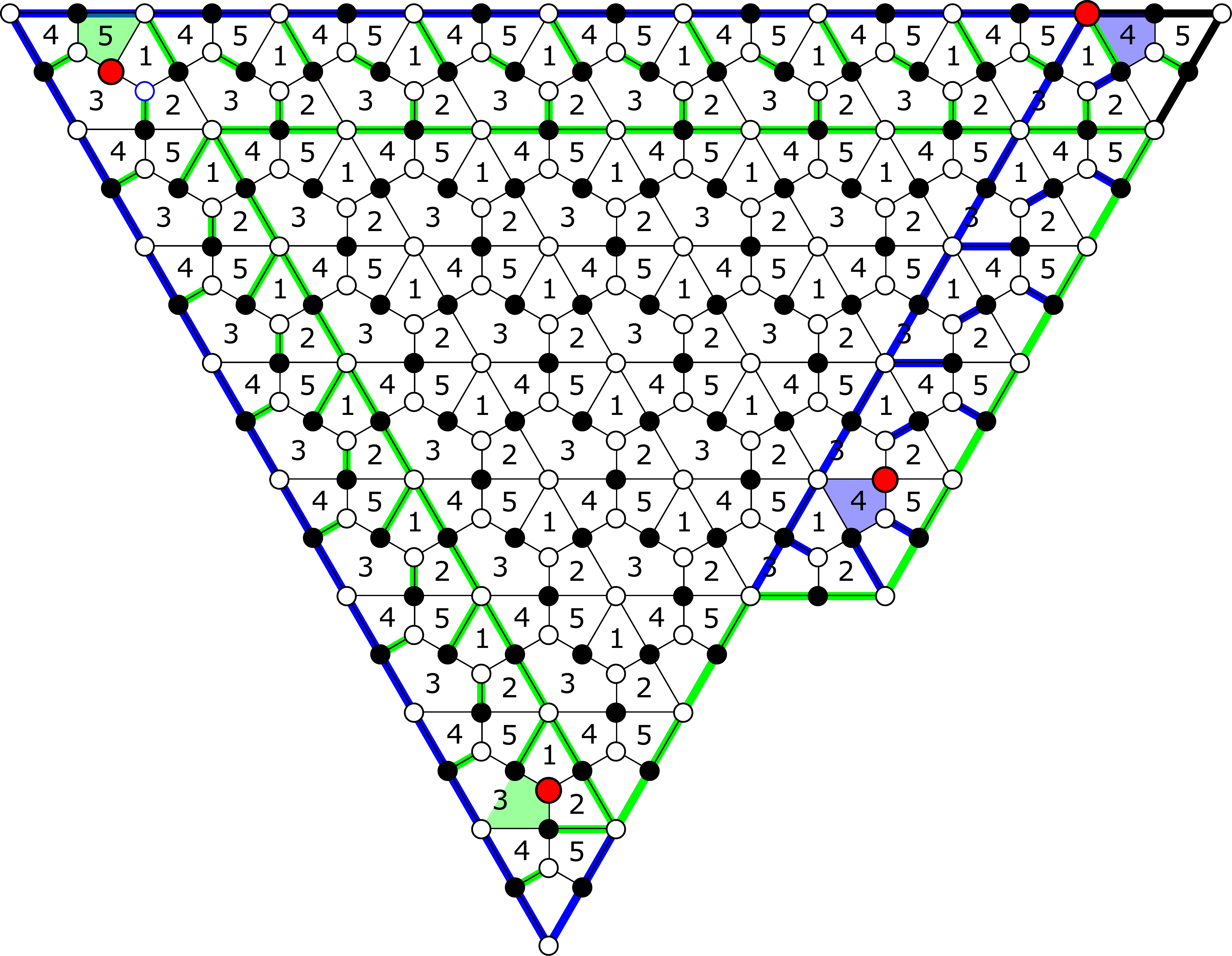}
\includegraphics[scale=0.1]{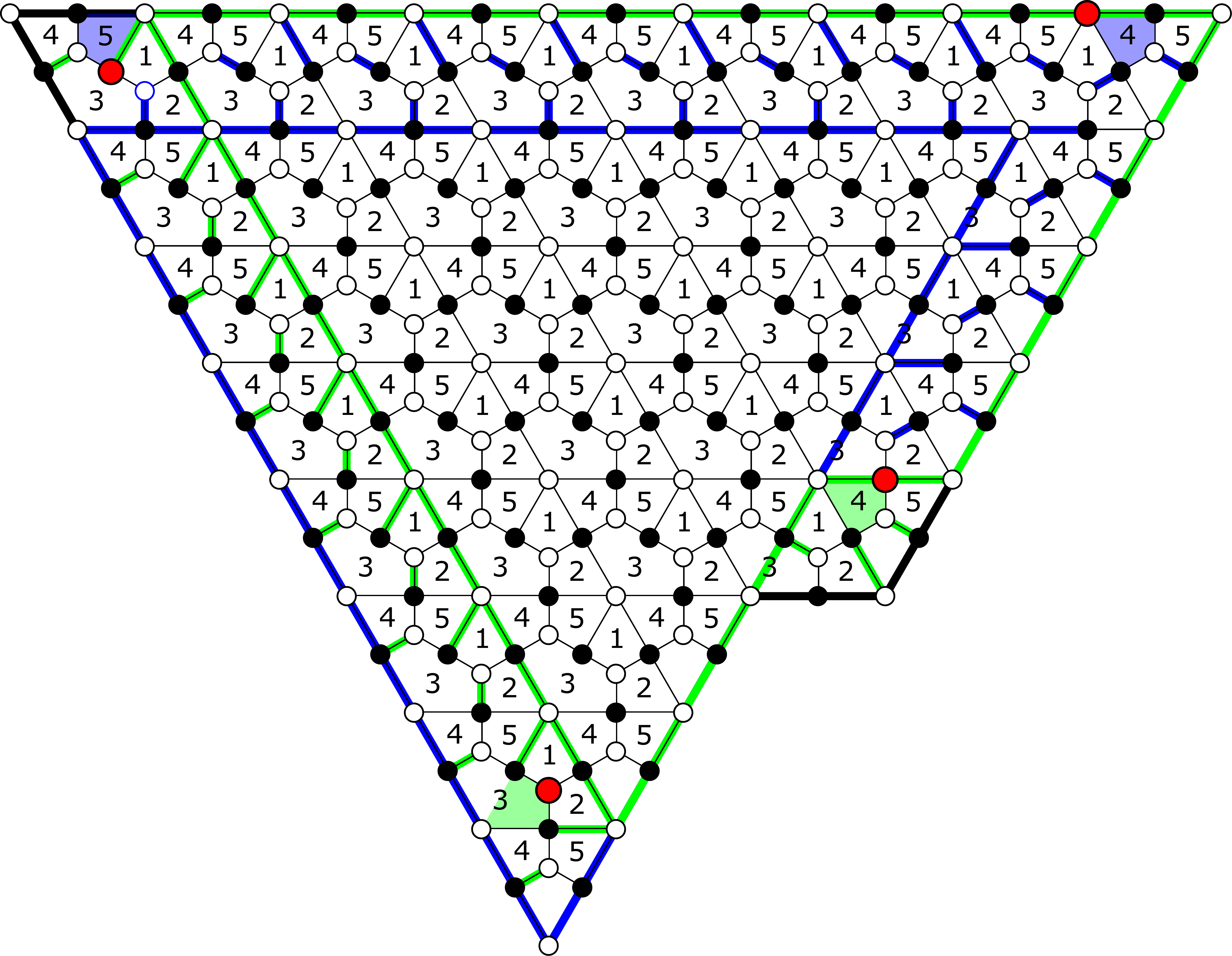}
\caption{Covering monomial for $x_{2k}$, $k$ even. Left: $T(\emptyset)$ and $T(\{p_1,p_2,p_3,p_4\})$. Middle: $T(\{p_1,p_2\})$ and $T(\{p_3,p_4\})$. Right $T(\{p_2,p_3\})$ and $T(\{p_1,p_4\})$.}
\label{fig:cov-mon-base2-K}
\end{figure}

$G-R$ (Special vertex removed): let $p_1$ be the bottommost (W) point on edge a (not in a forced matching), $p_2$ be the leftmost (B) point on edge e (not in a forced matching), $p_3$ be the topmost (W) point on edge d, $p_4$ be the (B) point on the edge between the 2-block and 3-block above the special vertex.
\begin{align*}
T(\emptyset) = 1, \quad T(\{p_1,p_2,p_3,p_4\}) = x_2x_3x_4x_5, \quad T(\{p_1,p_2\}) = x_3x_5, \\
T(\{p_3,p_4\}) = x_2x_4, \quad T(\{p_2,p_3\}) = x_4x_5, \quad T(\{p_1,p_4\}) = x_2x_3.
\end{align*}

\begin{figure}[h!]
\includegraphics[scale=0.1]{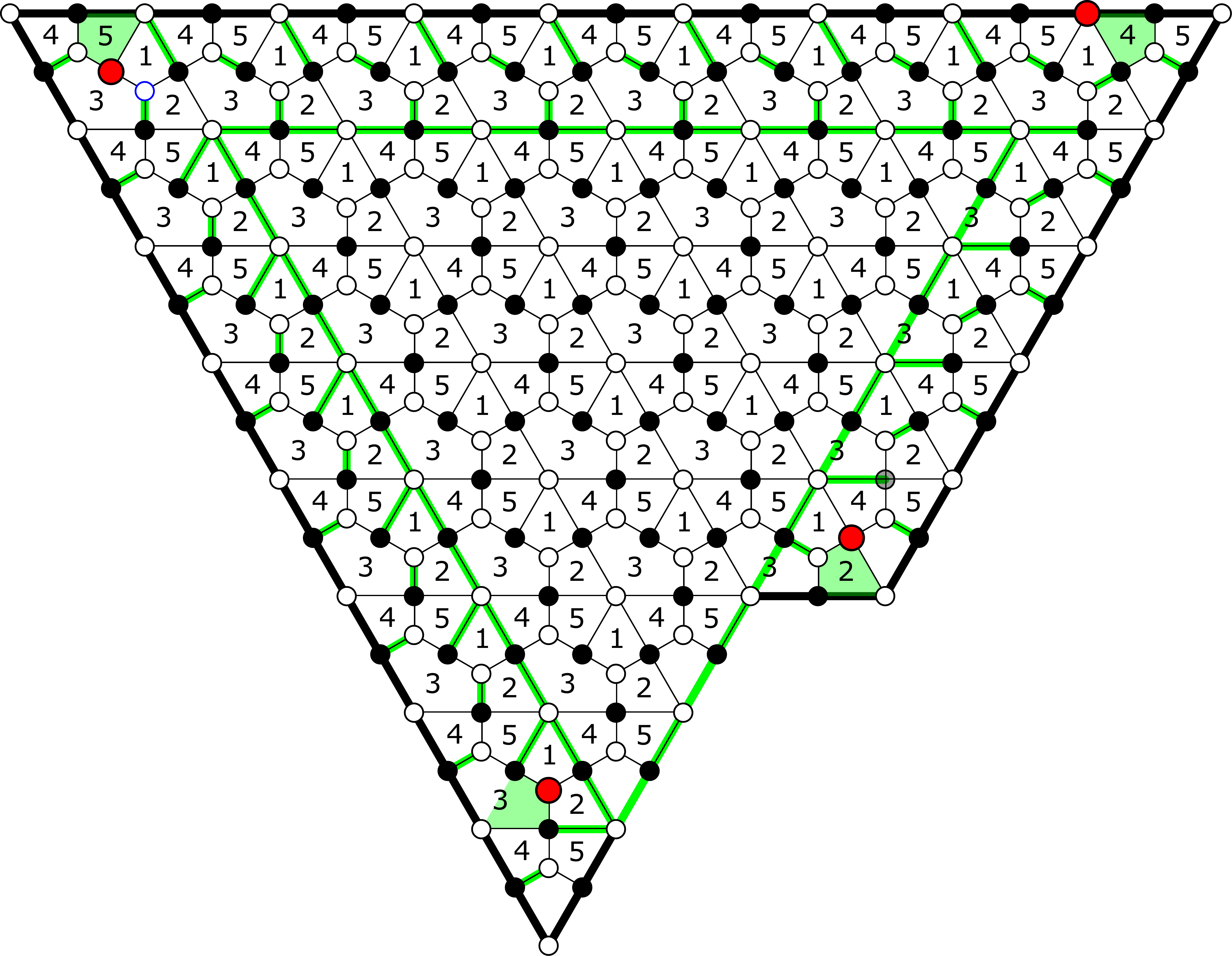}
\includegraphics[scale=0.1]{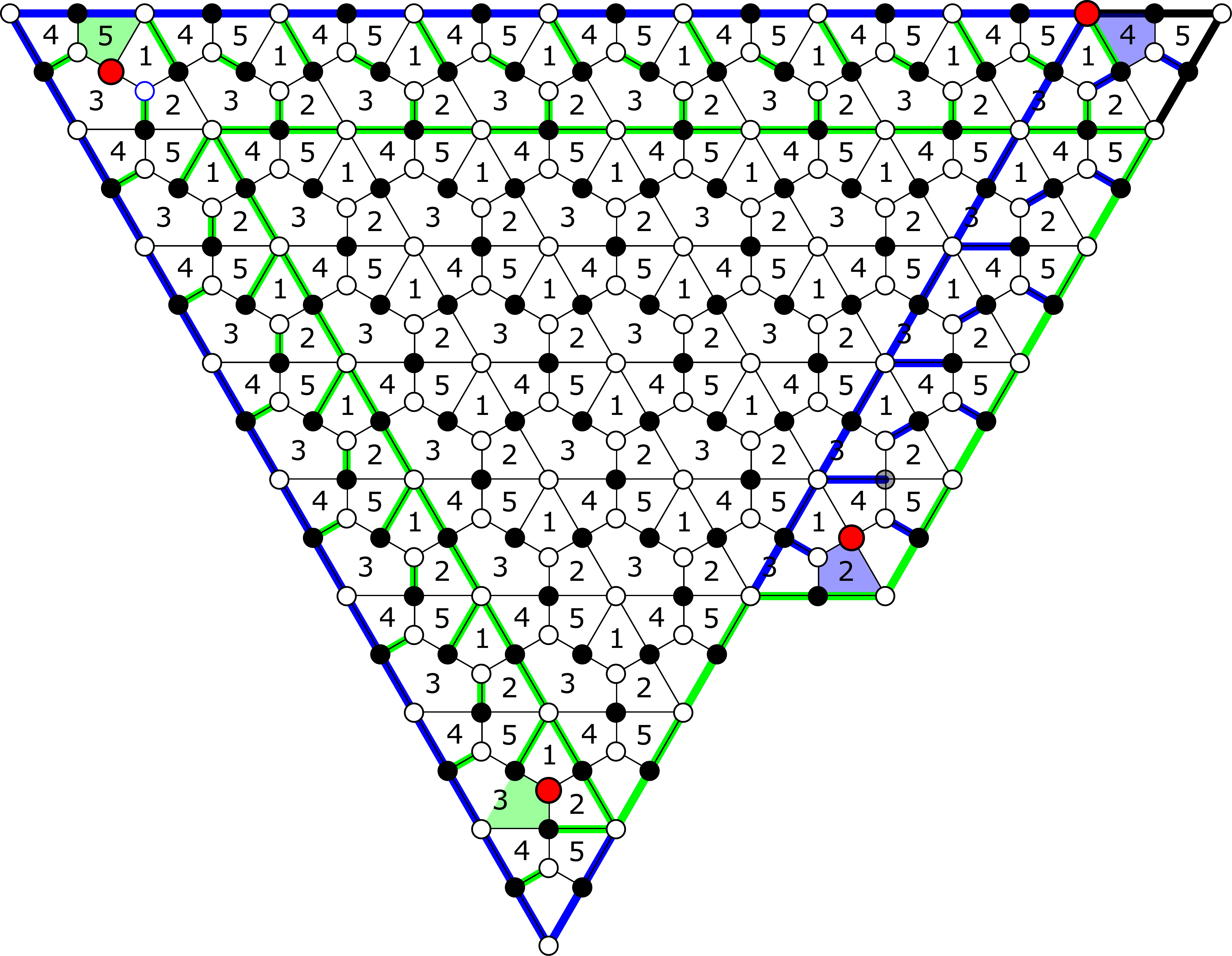}
\includegraphics[scale=0.1]{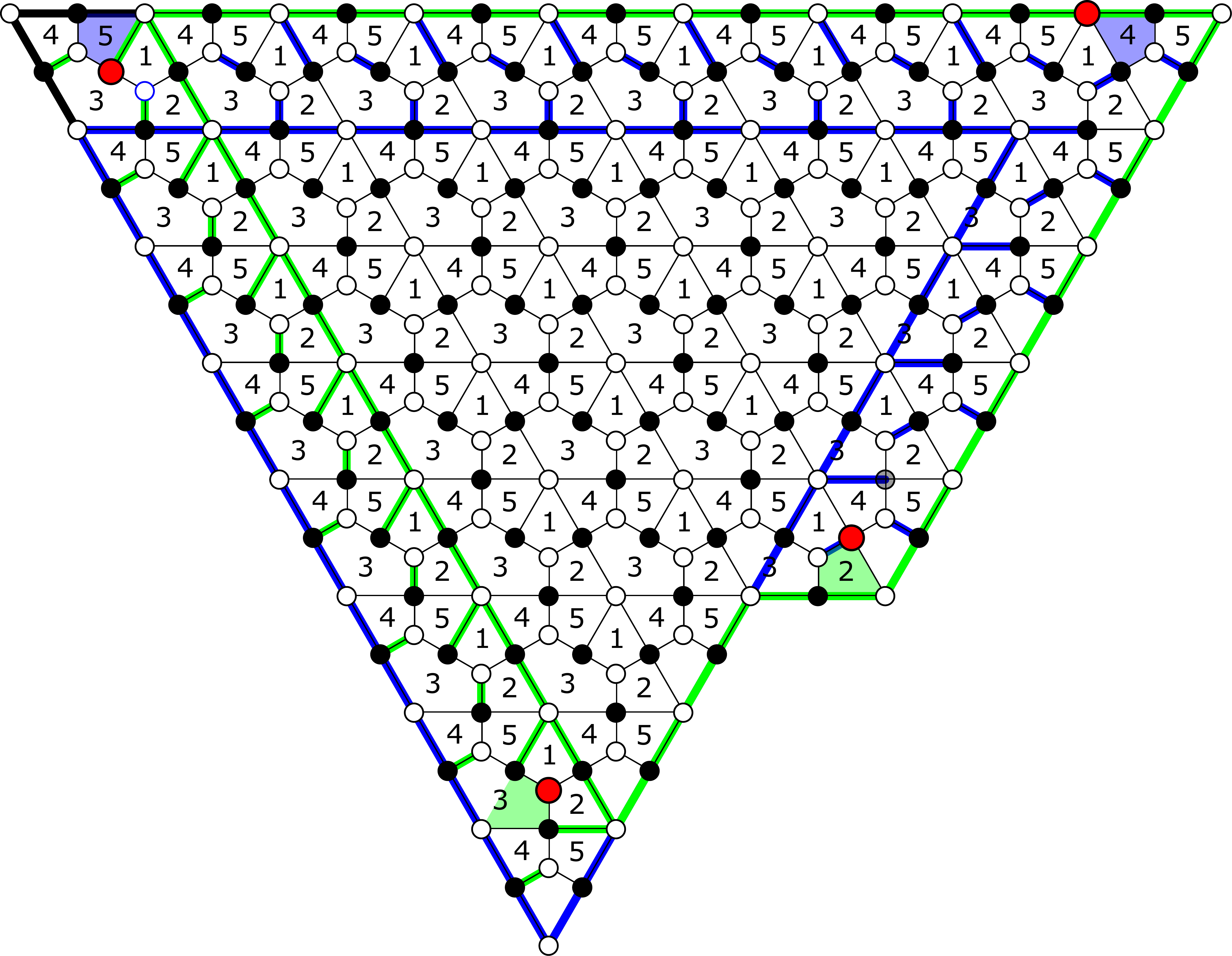}
\caption{Covering monomial for $x_{2k}$, $k$ odd. Left: $T(\emptyset)$ and $T(\{p_1,p_2,p_3,p_4\})$. Middle: $T(\{p_1,p_2\})$ and $T(\{p_3,p_4\})$. Right $T(\{p_2,p_3\})$ and $T(\{p_1,p_4\})$.}
\label{fig:cov-mon-base2-K}
\end{figure}

\subsection{Inductive Step for $A^{n^2}B^{n(n-1)}x_{2k}$, $n\geq1$, $k\geq3n-1$}\label{sub:induction}

\

As we have explained in Section~\ref{sub:overview}, we will only show the inductive step for the case $A^{n^2}B^{n(n-1)}x_{2k}$. All of the other cases can be proved in the same way and we provide the data in Appendix~\ref{sec:appendix} for doing so. 

Assume the contours of $A^{m^2}B^{m(m-1)}x_{2k}$ and $A^{m(m+1)}B^{m^2}x_{2k+1}$, as defined in Theorem~\ref{thm:contours}, give the correct cluster variables for any $m\leq n-1$ and $k\geq 3n-1$. Now we want to show that the contour of $A^{m^2}B^{m(m-1)}x_{2k}$ is correct for any $k \geq 3n-1$ and $m=n$. 

The recurrence we use is
\begin{align} \label{eqn:rec-1}
&(A^{n^2}B^{n(n-1)}x_{2k})(A^{(n-1)^2}B^{(n-1)(n-2)}x_{2k+2}) \\
=&(A^{(n-1)n}B^{(n-1)^2}x_{2k-1})(A^{(n-1)n}B^{(n-1)^2}x_{2k+3}) + (A^{(n-1)n}B^{(n-1)^2}x_{2k+1})^2
\end{align}

where by the induction hypothesis, we have the correctness of the contours for cluster variables $A^{(n-1)^2}B^{(n-1)(n-2)}x_{2k+2}$, $A^{(n-1)n}B^{(n-1)^2}x_{2k-1}$, $A^{(n-1)n}B^{(n-1)^2}x_{2k+1}$ and $A^{(n-1)n}B^{(n-1)^2}x_{2k+3}$.

For this case, let contour $C$ be the following:
$$C = (a,b,c,d,e) = \left(k-1+n, -\left\lceil \frac{k+5n-5}{2} \right\rceil, 2n-2, \left\lfloor \frac{k-3n+3}{2} \right\rfloor, n-k-1\right).$$

% Define sth like sgn(a,b,c,d,e)?

Since $k\geq 3n-1$, we have $a>0, b<0, c \ge 0,d>0, e<0$. Again, we use the steps described in Section~\ref{sub:technique}. Let $G=\hatcal{G}(C)$.

\

\noindent\textbf{Step 1:} We use non-alternating Kuo Condensation theorem (Lemma~\ref{lem:KuoNal}) and write down
\begin{align*}
w(G-\{p_1,p_2\})w(G-\{p_3,p_4\})=&w(G)w(G-\{p_1,p_2,p_3,p_4\})\\
&+w(G-\{p_1,p_3\})w(G-\{p_2,p_4\}). 
\end{align*}
where we let $S_1=\{p_1,p_2\}$, $S_2=\{p_3,p_4\}$, $S_3=\emptyset$, $S_4=\{p_1,p_2,p_3,p_4\}$, $S_5=\{p_1,p_3\}$, $S_6=\{p_2,p_4\}$. Then we multiply both sides by $m(\cal{G}(C))^2$. 
%define til G and overline til G (there is a definition defining til G and cal til G - change this definiton)
%define delta of two contours

\

\noindent\textbf{Step 2.} We define the four points $p_1,p_2,p_3,p_4$ on edge $d,e,b,c$ respectively, where $p_1$, $p_4$ are white, while $p_2$, $p_3$ are black. We list the effect of each removal as follows. 
% Compare this with the items in 7.3
% Include Figures for p_i's
\begin{align*}
-\{p_1\} &= 
\begin{cases}
( 0,-1,1,-1,1 ) -R,& \text{if } G = (a,b,c,d,e)-K\\
    ( 0,0,1,0,1 ) -K,              & \text{if } G = (a,b,c,d,e)-R
\end{cases}
\\
-\{p_2\} &= ( -1,0,0,-1,1 )\\
-\{p_3\} &= ( -1,1,-1,0,0 )\\
-\{p_4\}&= 
\begin{cases}
    ( 0,0,0,0,0 ) -R,& \text{if } G = (a,b,c,d,e)-K\\
    ( 0,1,0,1,0 ) -K,              & \text{if } G = (a,b,c,d,e)-R
\end{cases}
\end{align*}

The position of these points and the effects of removing each point, when the special point is kept, is shown in Figure~\ref{fig:effect_1.1_p1_K} ($p_1$) and Figure~\ref{fig:effect_1.1_K} ($p_2,p_3,p_4$). Notice that after we remove $p_1$, as shown in Figure~\ref{fig:effect_1.1_p1_K}, some area gets deleted (grey) and some area gets added (pink). The position of these points and effects of removing each point, when the special point ire removed, is shown in Figure~\ref{fig:effect_1.1_R}.

\begin{figure}[h!]
\includegraphics[scale=0.1]{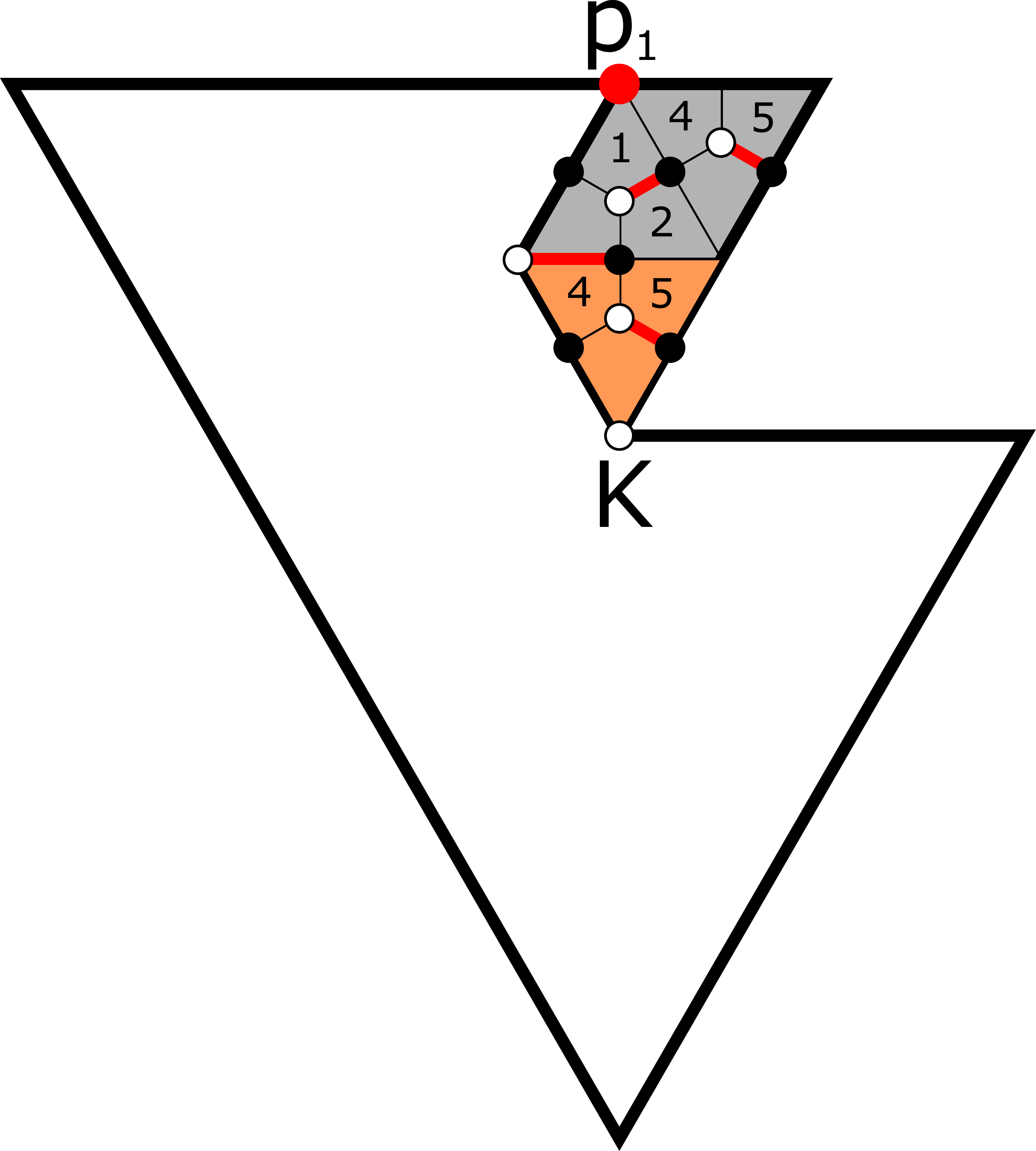}
\quad
\includegraphics[scale=0.1]{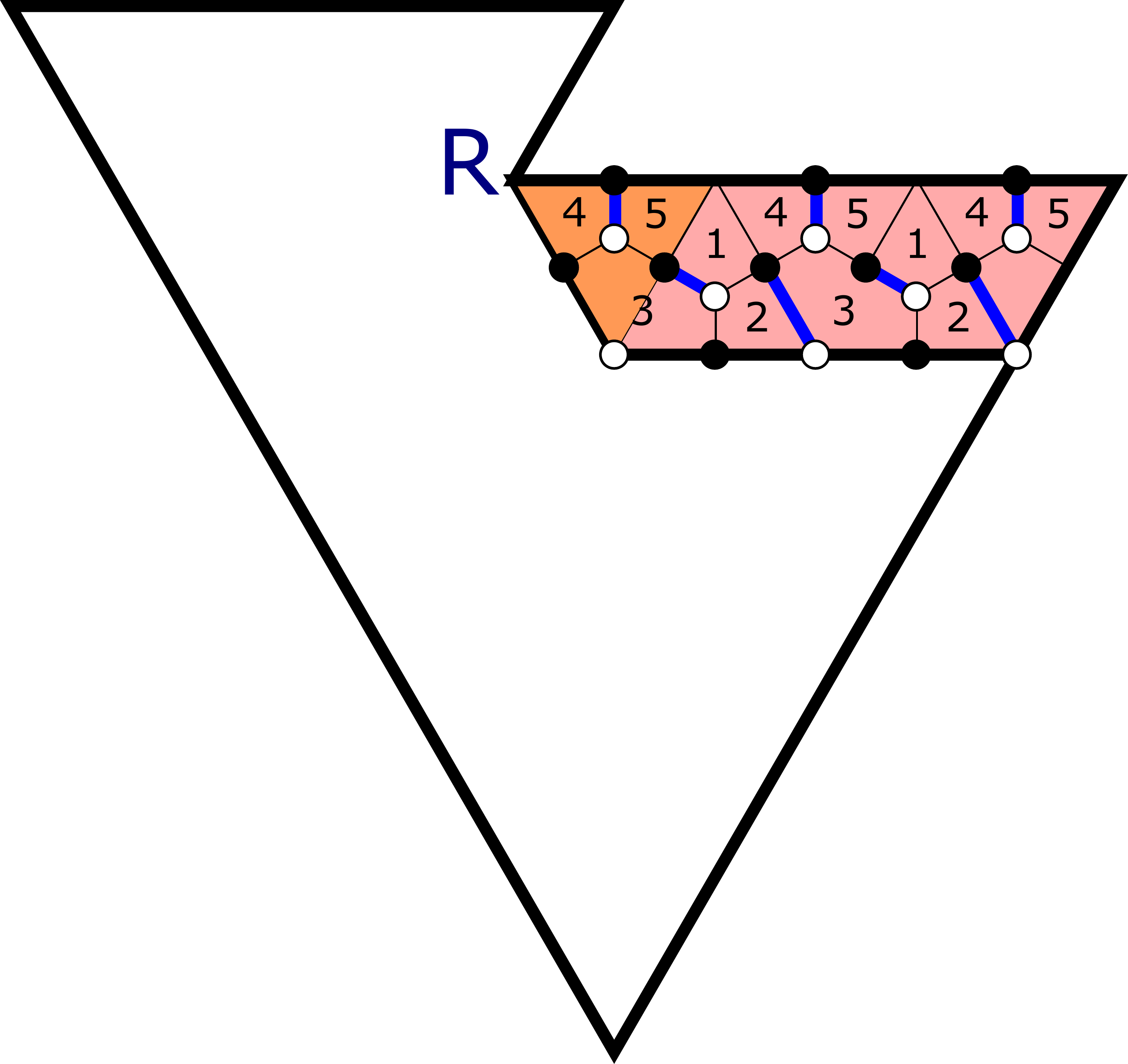}
\caption{Effect of removing $p_1$ for $A^{n^2}B^{n(n-1)}$ with $n\geq1$, $k\geq 3n-1$ and the special point kept. Left: before removal. Right: after removal.}
\label{fig:effect_1.1_p1_K}
\end{figure}

\begin{figure}[h!]
\includegraphics[scale=0.1]{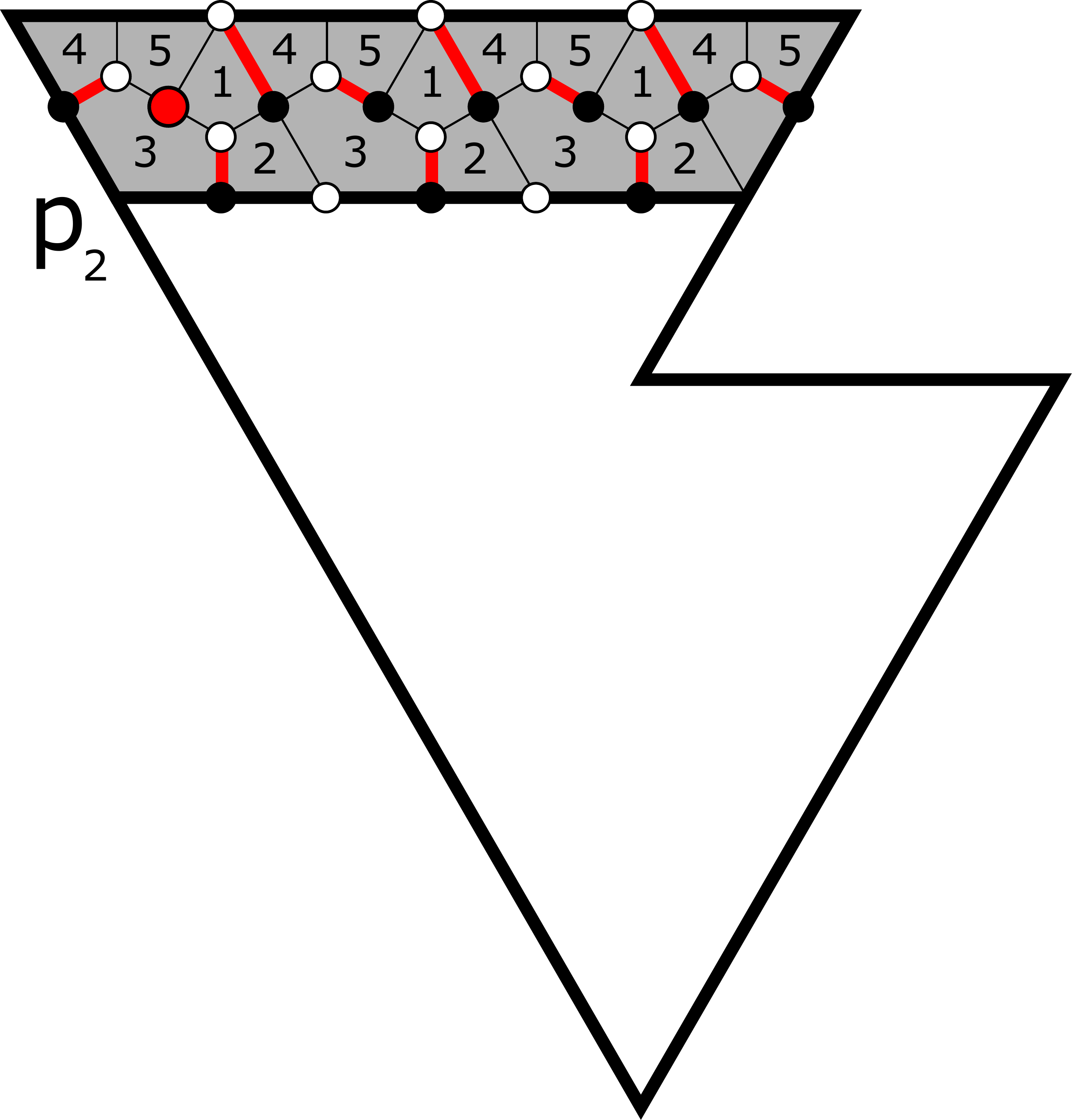}\quad
\includegraphics[scale=0.1]{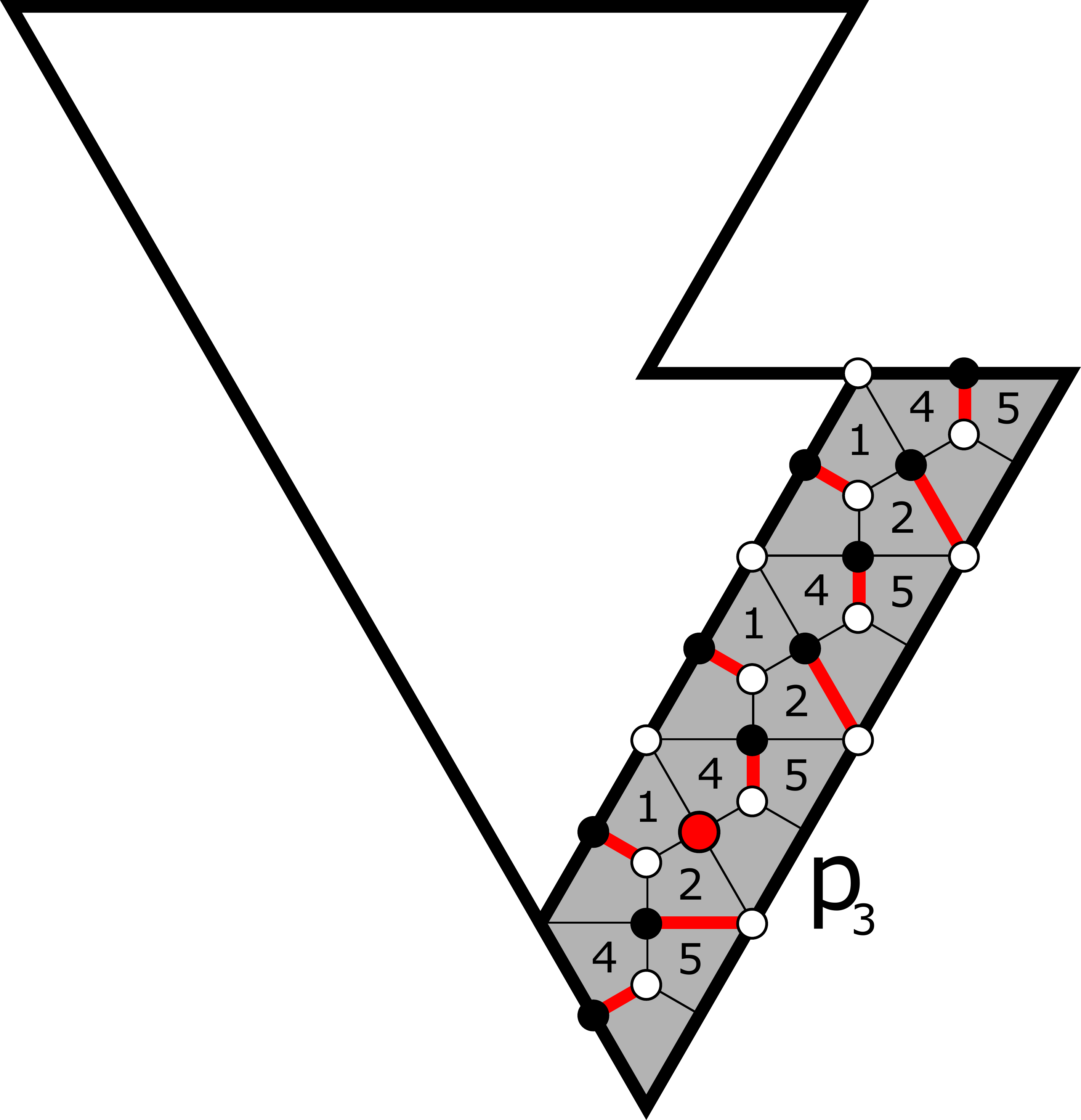}\quad
\includegraphics[scale=0.1]{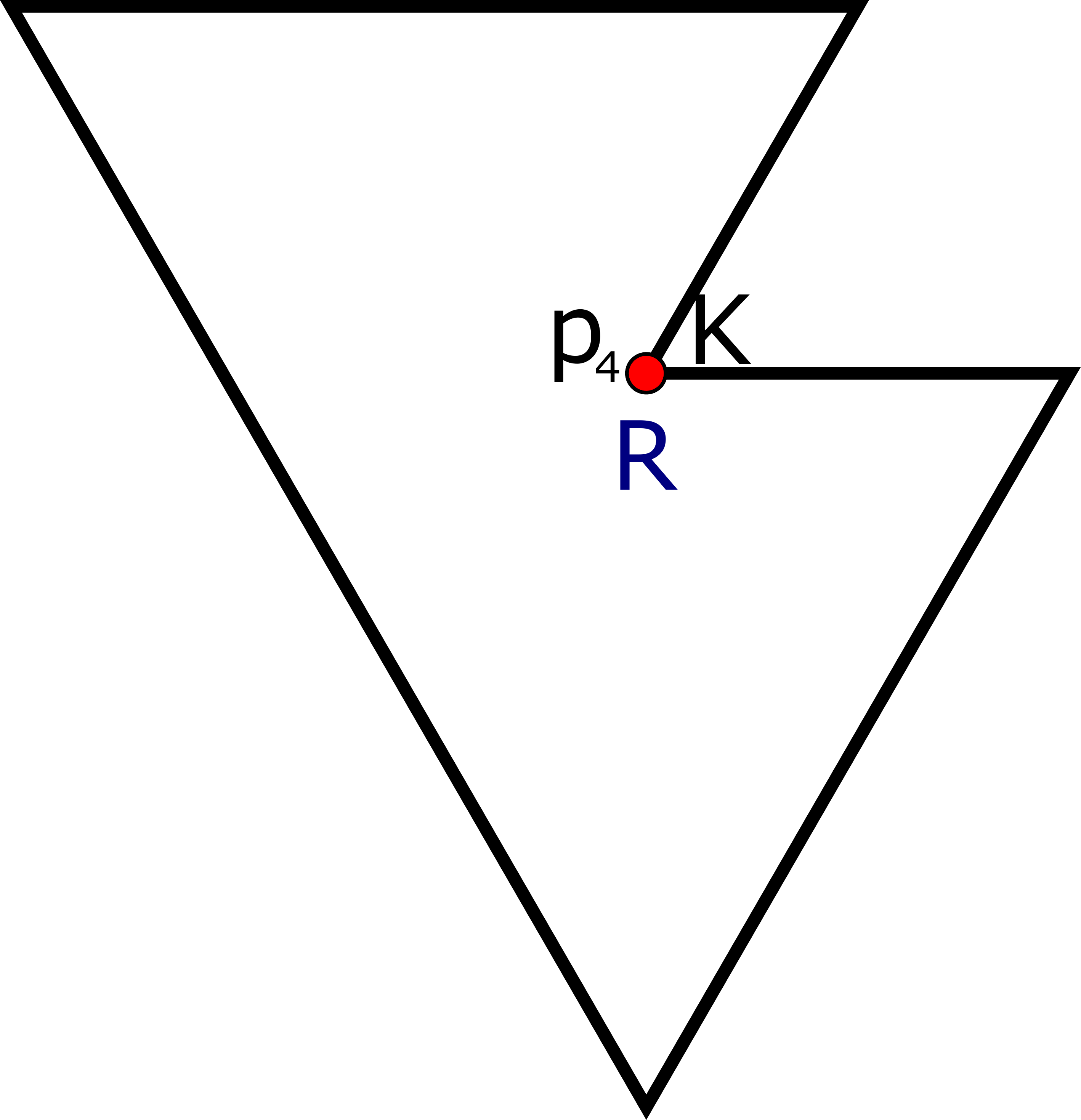}
\caption{Effects of removing $p_2,p_3,p_4$ for $A^{n^2}B^{n(n-1)}$ with $n\geq1$, $k\geq 3n-1$ and the special point kept.}
\label{fig:effect_1.1_K}
\end{figure}

\begin{figure}[h!]
\includegraphics[scale=0.1]{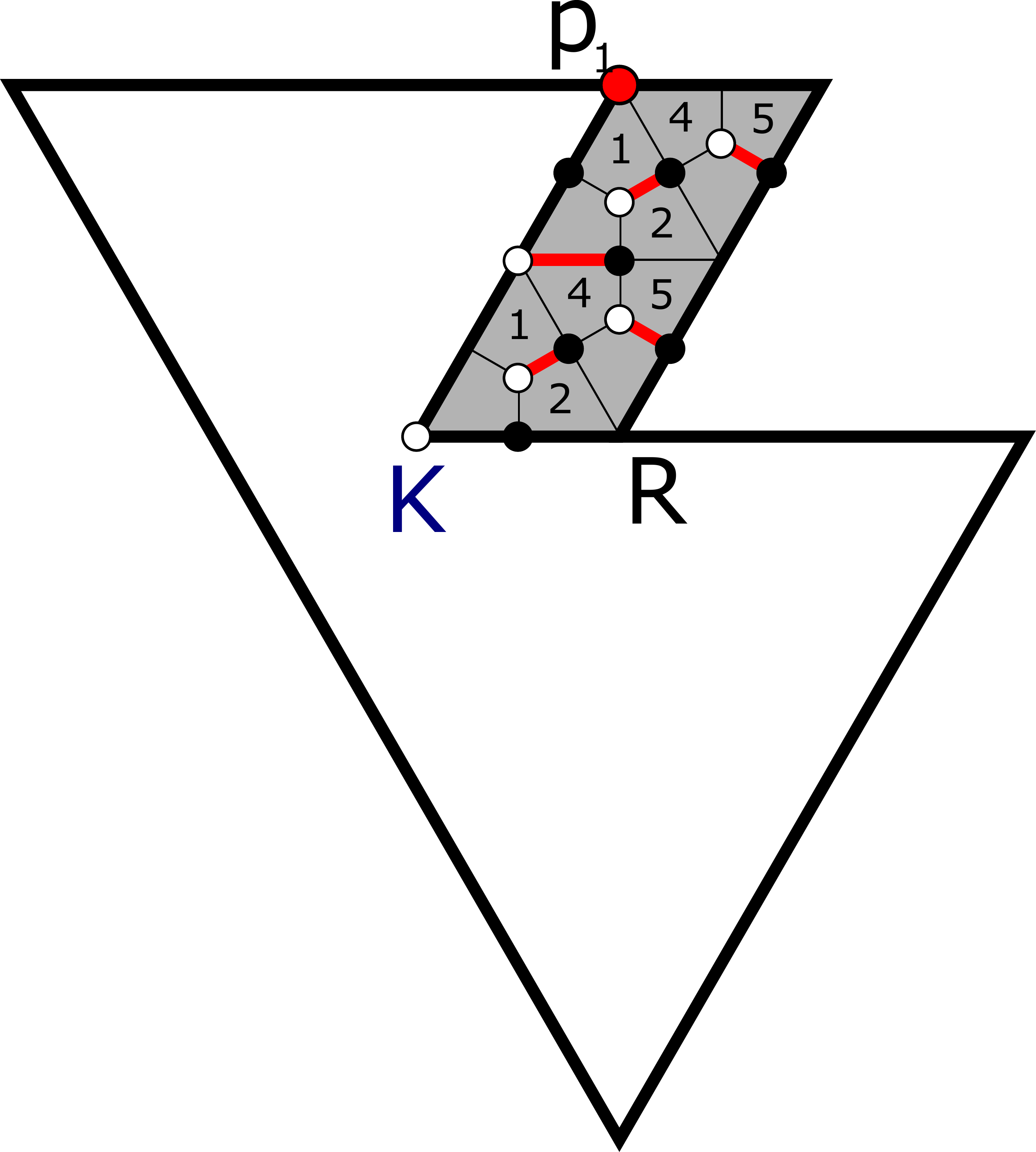}
\includegraphics[scale=0.1]{remove_p2_+_-_+_+_-_.pdf}
\includegraphics[scale=0.1]{remove_p3_+_-_+_+_-_.pdf}
\includegraphics[scale=0.1]{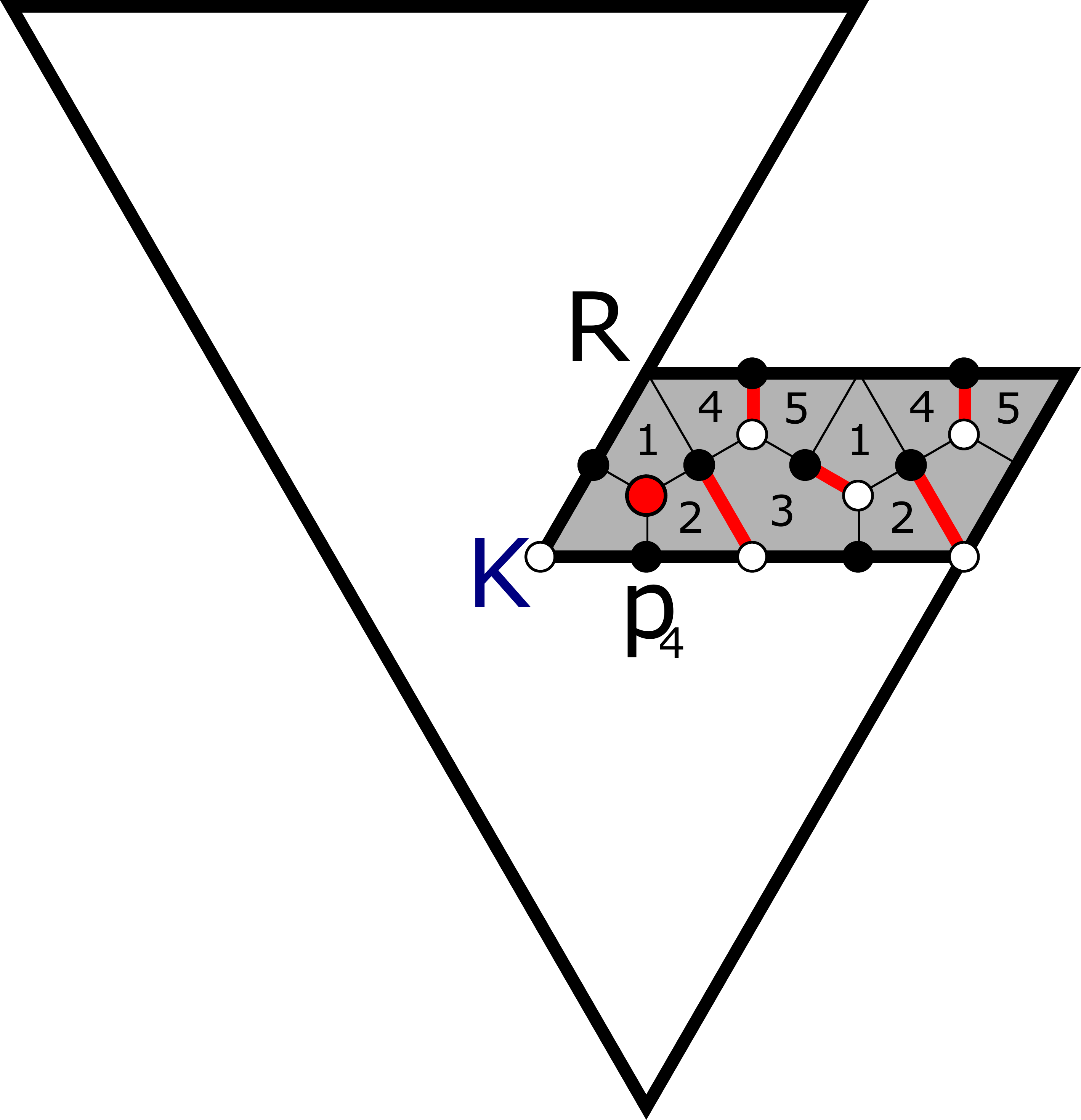}
\caption{Effects of removing $p_i$'s for $A^{n^2}B^{n(n-1)}$ with $n\geq1$, $k\geq 3n-1$ and the special point removed.}
\label{fig:effect_1.1_R}
\end{figure}

Below, we explicitly write down the contour $C_i$ satisfying $\hat{\cal{G}(C)-S_i}=\hat{\cal{G}}(C_i)$ for each $S_i$, with the corresponding cluster variable, followed from induction hypothesis. 

\noindent\textbf{Case 1:} $n+k$ is odd. Thus, the special vertex is kept, and $C=(a,b,c,d,e)-K$. 

\begin{align*}
\hat{G-\{p_1,p_2\}}&= \hatcal{G}(a-1, b-1, c+1, d-2, e+2)-R\\
&=\hatcal{G}(C_1)\\
\hat{G-\{p_3,p_4\}}&= \hatcal{G}(a-1, b+1, c-1, d, e)-R\\
&=\hatcal{G}(C_2),\ \text{graph of }A^{(n-1)^2}B^{(n-1)(n-2)}x_{2k+2}\\
\hat{G} &= \hatcal{G}(a, b, c, d, e)-K\\
&=\hatcal{G}(C_3),\ \text{graph of }A^{n(n-1)}B^{(n-1)^2}x_{2k+3}\\
\hat{G-\{p_1,p_2,p_3,p_4\}} &= \hatcal{G}(a-2, b+1, c, d-1, e+2)-K\\
&=\hatcal{G}(C_4),\ \text{graph of }A^{n^2-n}B^{(n-1)^2}x_{2k-1}\\
\hat{G-\{p_1,p_3\}}&= \hatcal{G}(a-1, b, c, d-1, e+1)-R\\ 
&=\hatcal{G}(C_5),\ \text{graph of }A^{n^2-n}B^{(n-1)^2}x_{2k+1}\\
\hat{G-\{p_2,p_4\}}&= \hatcal{G}(a-1, b, c, d-1, e+1)-R\\
&=\hatcal{G}(C_6),\ \text{graph of }A^{n^2-n}B^{(n-1)^2}x_{2k+1}
\end{align*}

\begin{comment}

% Some inconsistency in this part: i.e. the index of n is off by 1, need to be checked. Commented out below is the original text. 

\begin{align*}
G-\{p_1,p_2\} &= A^{(n+1)^2}B^{n^2+n}x_{2k} \\
&= (k-2+n+1, -\left\lceil \frac{k-4+5n+5}{2} \right\rceil, 2n+1, \left\lfloor \frac{k-3n-3}{2} \right\rfloor, 1+n+1-k) \\
&= ( -1, -1, 1, -2, 2 ) -R\\
G-\{p_3, p_4\} &= A^{n^2}B^{n^2-n}x_{2k+2} \\
&= (k+1-2+n, -\left\lceil \frac{k+1-4+5n}{2} \right\rceil, 2n-1, \left\lfloor \frac{k-3n+1}{2} \right\rfloor, 2+n-k)\\
&= ( -1,1,-1,0,0 ) -R\\
G-\{p_1,p_3\} &= G-\{p_2,p_4\} = A^{n^2+n}B^{n^2}x_{2k+1} \\
&= (k+1-2+n, -\left\lceil \frac{k+1-2+5n}{2} \right\rceil, 2n, \left\lfloor \frac{k+1-2-3n}{2} \right\rfloor, 2+n-k-1)\\
&= ( -1,0,0,-1,1 ) -R\\
G-\{p_1,p_2,p_3,p_4\}&= A^{n^2+n}B^{n^2}x_{2k-1} \\
&= (k-2+n,-\left\lceil \frac{k-2+5n}{2} \right\rceil, 2n, +\left\lfloor\frac{k-2-3n}{2} \right\rfloor, 2+n-k) \\
&= ( -2,1,0,-1,2 ) -K
\end{align*}

\end{comment}

\noindent\textbf{Case 2:} $n+k$ is even. Thus, the special vertex is removed, and $C=(a,b,c,d,e)-R$. 

\begin{align*}
\hat{G-\{p_1,p_2\}}&= \hatcal{G}(a-1, b, c+1, d-1, e+2)-K\\
&=\hatcal{G}(C_1)\\
\hat{G-\{p_3,p_4\}}&= \hatcal{G}(a-1, b+2, c-1, d+1, e)-K\\
&=\hatcal{G}(C_2),\ \text{graph of }A^{(n-1)^2}B^{(n-1)(n-2)}x_{2k+2}\\
\hat{G} &= \hatcal{G}(a, b, c, d, e)-R\\
&=\hatcal{G}(C_3),\ \text{graph of }A^{n(n-1)}B^{(n-1)^2}x_{2k+3}\\
\hat{G-\{p_1,p_2,p_3,p_4\}} &= \hatcal{G}(a-2, b+1, c, d-1, e+2)-R\\
&=\hatcal{G}(C_4),\ \text{graph of }A^{n^2-n}B^{(n-1)^2}x_{2k-1}\\
\hat{G-\{p_1,p_3\}}&= \hatcal{G}(a-1, b+1, c, d, e+1)-K\\ 
&=\hatcal{G}(C_5),\ \text{graph of }A^{n^2-n}B^{(n-1)^2}x_{2k+1}\\
\hat{G-\{p_2,p_4\}}&= \hatcal{G}(a-1, b+1, c, d, e+1)-K\\
&=\hatcal{G}(C_6),\ \text{graph of }A^{n^2-n}B^{(n-1)^2}x_{2k+1}
\end{align*}

\begin{comment}
%Some inconsistency as well. i.e. The recurrence we uses is for n=n-1, but here we show graph for n=n+1 (which we should show for n=n).
\begin{align*}
G-\{p_1,p_2\} &= A^{(n+1)^2}B^{n^2+n}x_{2k} \\
&= (k-2+n+1, -\left\lceil \frac{k-4+5n+5}{2} \right\rceil, 2n+1, \left\lfloor \frac{k-3n-3}{2} \right\rfloor, 1+n+1-k) \\
&= ( -1, 0, 1, -1, 2 ) -R\\
G-\{p_3, p_4\} &= A^{n^2}B^{n^2-n}x_{2k+2} \\
&= (k+1-2+n, -\left\lceil \frac{k+1-4+5n}{2} \right\rceil, 2n-1, \left\lfloor \frac{k-3n+1}{2} \right\rfloor, 2+n-k)\\
&= ( -1,2,-1,1,0 ) -R\\
G-\{p_1,p_3\} &= G-\{p_2,p_4\} = A^{n^2+n}B^{n^2}x_{2k+1} \\
&= (k+1-2+n, -\left\lceil \frac{k+1-2+5n}{2} \right\rceil, 2n, \left\lfloor \frac{k+1-2-3n}{2} \right\rfloor, 2+n-k-1)\\
&= ( -1,1,0,0,1 ) -R\\
G-\{p_1,p_2,p_3,p_4\}&= A^{n^2+n}B^{n^2}x_{2k-1} \\
&= (k-2+n,-\left\lceil \frac{k-2+5n}{2} \right\rceil, 2n, +\left\lfloor\frac{k-2-3n}{2} \right\rfloor, 2+n-k) \\
&= ( -2,1,0,-1,2 ) -K
\end{align*}

\end{comment}

By recurrence~\ref{eqn:rec-1}, we conclude that $\hatcal{G}(C_1)$ is the graph corresponding to $A^{n^2}B^{n^2-n}x_{2k}$.

\

\noindent\textbf{Step 3.} In this step we specify the sets $T_i$ for a choice of $p_1,p_2,p_3,p_4$ and verify equation~\ref{eqn:T_i}. In each of the diagrams in Figures~\ref{fig:cov-mon+-++-K} and \ref{fig:cov-mon+-++-R}, $p_1,p_2,p_3,p_4$ are the red points. Each figure shows the new contours $C_i$ and $C_{i+1}$ in green and in blue.

There is a bijection between perfect matchings of $\cal{G}(C_i)$ and perfect matchings of $\cal{G}(C)- S_i$. Let $M$ be any perfect matching of 
$\cal{G}(C_i)$. Essentially, the weight of the blocks in $T_i$ is exactly what we need to multiply $m(\cal{G}(C_i))w(M)$ by so that it corresponds to a term of $m(\cal{G}(C))w(\cal{G}(C)-S_i)$. 

We explain how Figure~\ref{fig:cov-mon+-++-K}(Left) allows us to determine that the weight of $T_1$ is $x_3x_3x_4x_4$. Let us start with the green contour $C_1$. A perfect matching of $\cal{G}(C_1)$ corresponds to a perfect matching of $G-\{p_1,p_2\}$ if we remove the red matchings and add in the green matchings. Algebraically, this corresponds to multiplying by the weight of these matchings. The covering monomial of $\cal{G}(C_1)$ must be multiplied by the weight of all blocks that are outside the green contour $C_1$ and within the largest contour $C$. Note that the weight of these blocks are divided out by many of the green matchings and only the two 4-blocks (green) along edge e and the single 3-block (cyan) near the special vertex remain.

In this particular case, the contour $C_1$ is not completely contained in $C$, so we must also divide by the weight of all blocks within $C_1$ and outside $C$. Again, note that these weight of these blocks divide out all but one of the red matchings. So overall, the weight of $T_1$ includes $x_3x_4x_4$ from the covering monomial of $C$, part of the weight $\frac{1}{x_3x_4x_5}$ of two green matchings (shaded), and the weight $x_4x_5$ of the single red matching within $C_1$ (green). So the weight of $T_1$ is $x_4x_4$. Similarly, we find the weight of $T_2$ is $x_3x_3$ since we simply need to multiply by the weight of blocks outside $C_2$ within $C$ and the only blocks that are not divided out by forced matchings are the 3-block near the special vertex (cyan) and the 3-block near $p_3$ (blue).

\

\noindent\textbf{Case 1:} Special vertex kept. See Figure~\ref{fig:cov-mon+-++-K}.
\begin{figure}[h!]
\includegraphics[scale=0.11]{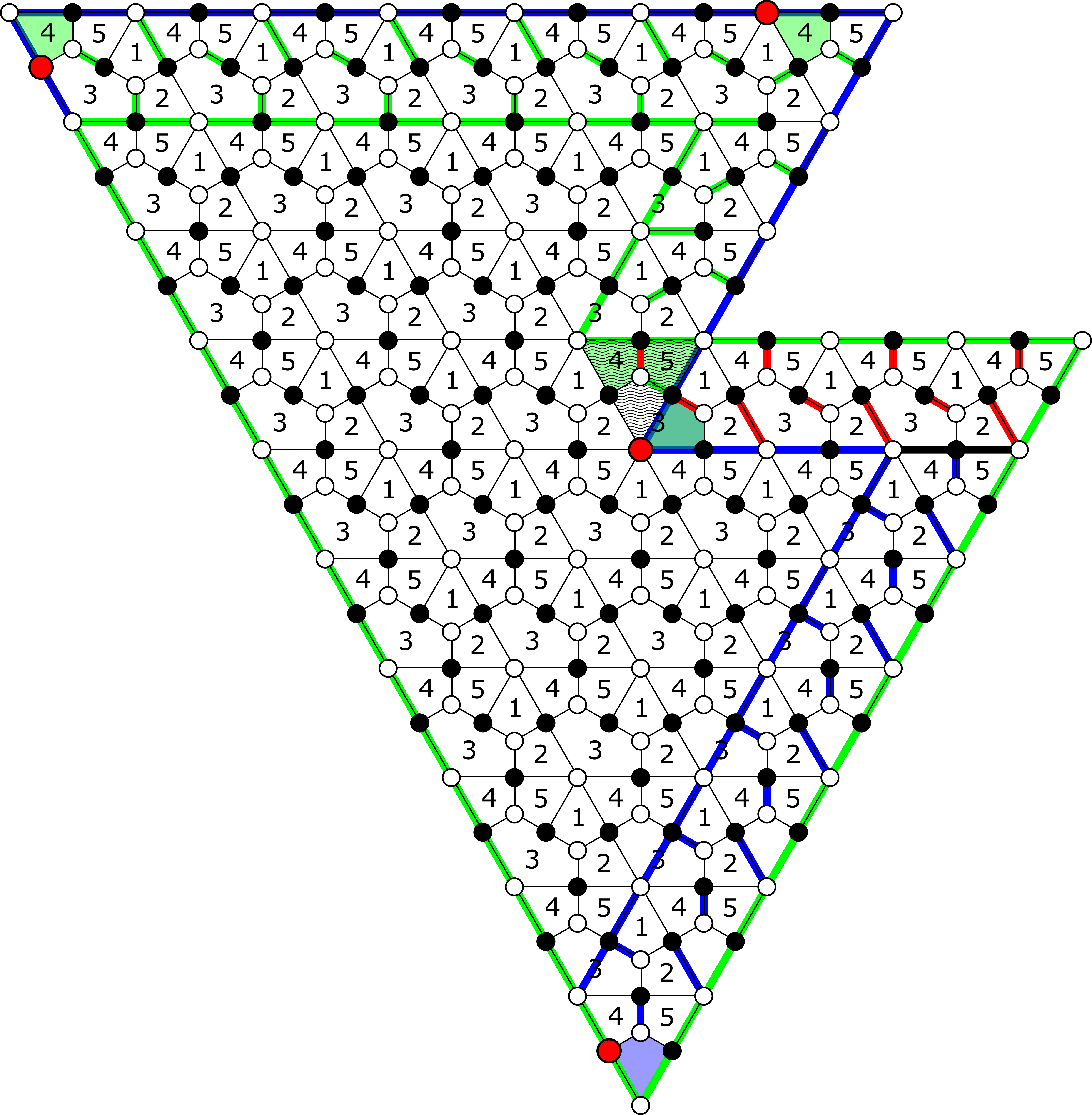}
\includegraphics[scale=0.11]{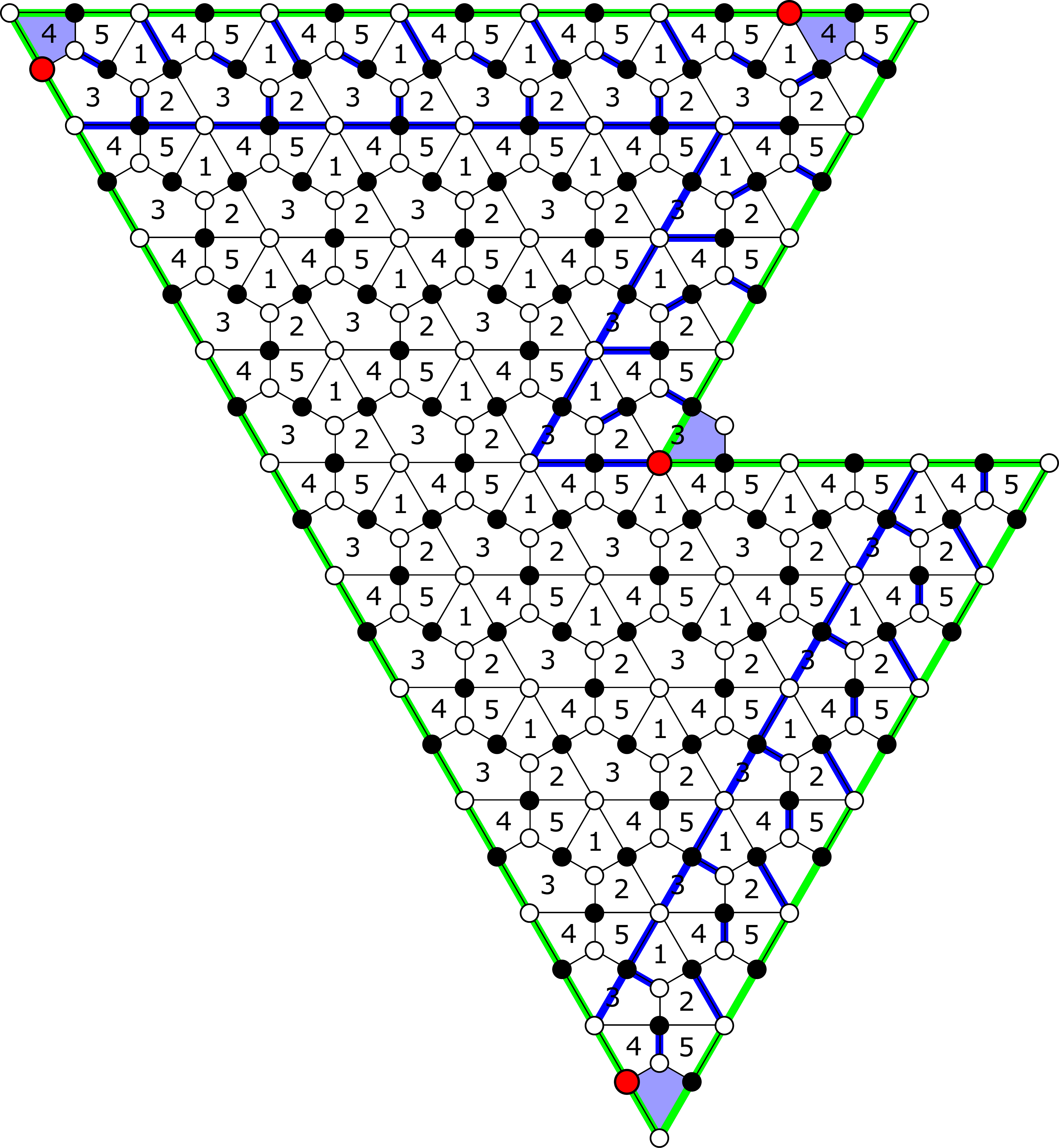}
\includegraphics[scale=0.11]{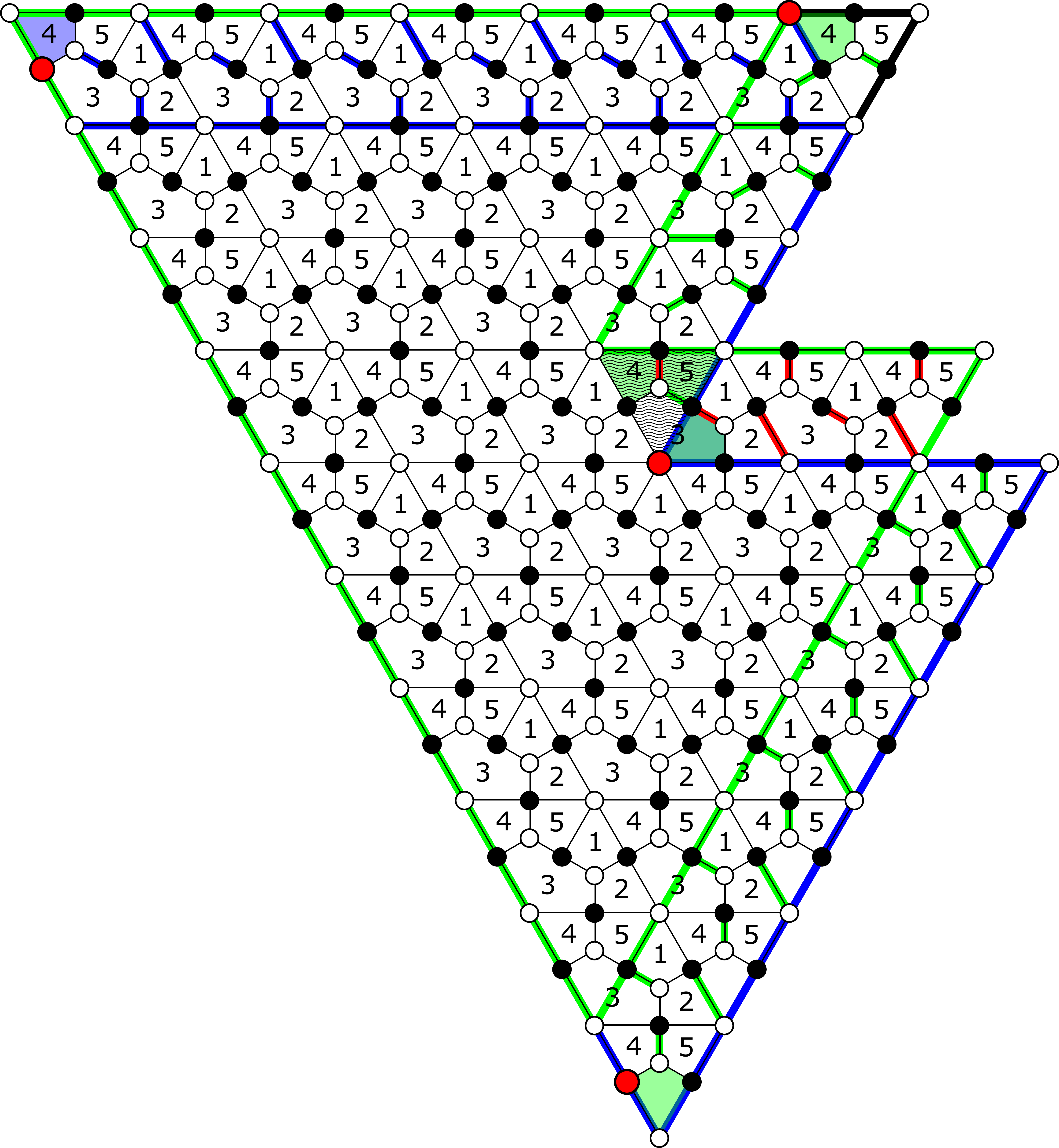}
\caption{Covering monomial for the case of $A^{n^2}B^{n(n-1)}$ with $n\geq1$, $k\geq3n-1$ and the special point kept. Left: $T(\{p_1,p_2\})$ and $T(\{p_3,p_4\})$. Middle: $T(\{p_1,p_2,p_3,p_4\})$ and $T(\emptyset)$. Right: $T(\{p_1,p_3\})$ and $T(\{p_2,p_4\})$.}
\label{fig:cov-mon+-++-K}
\end{figure}

For this choice of $p_1,p_2,p_3,p_4$, we have 
\begin{align*}
&\prod_{j\in T_1}x_j = x_3x_4x_4\frac{1}{x_3x_4x_5}x_4x_5 = x_4x_4,\qquad\prod_{j\in T_2}x_j = x_3x_3, \\
&\prod_{j\in T_3}x_j = 1,\qquad\prod_{j\in T_4}x_j = x_3x_3x_4x_4, \\
&\prod_{j\in T_5}x_j =  x_3x_3x_4\frac{1}{x_3x_4x_5}x_4x_5 = x_3x_4,\qquad\prod_{j\in T_6}x_j = x_3x_4.
\end{align*}

\

\noindent\textbf{Case 2:} Special vertex removed. See Figure~\ref{fig:cov-mon+-++-R}.
\begin{figure}[h!]
\includegraphics[scale=0.11]{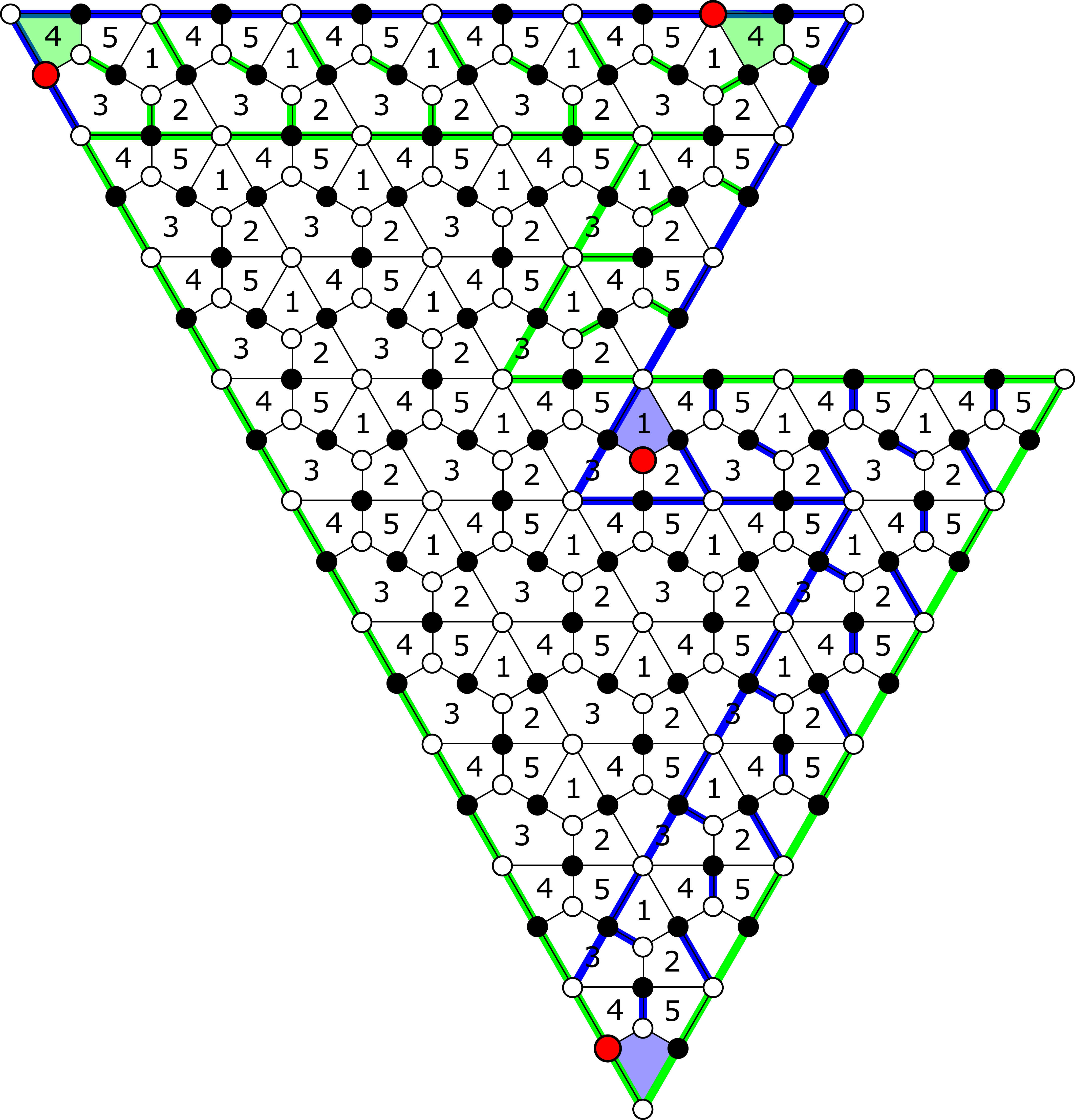}
\includegraphics[scale=0.11]{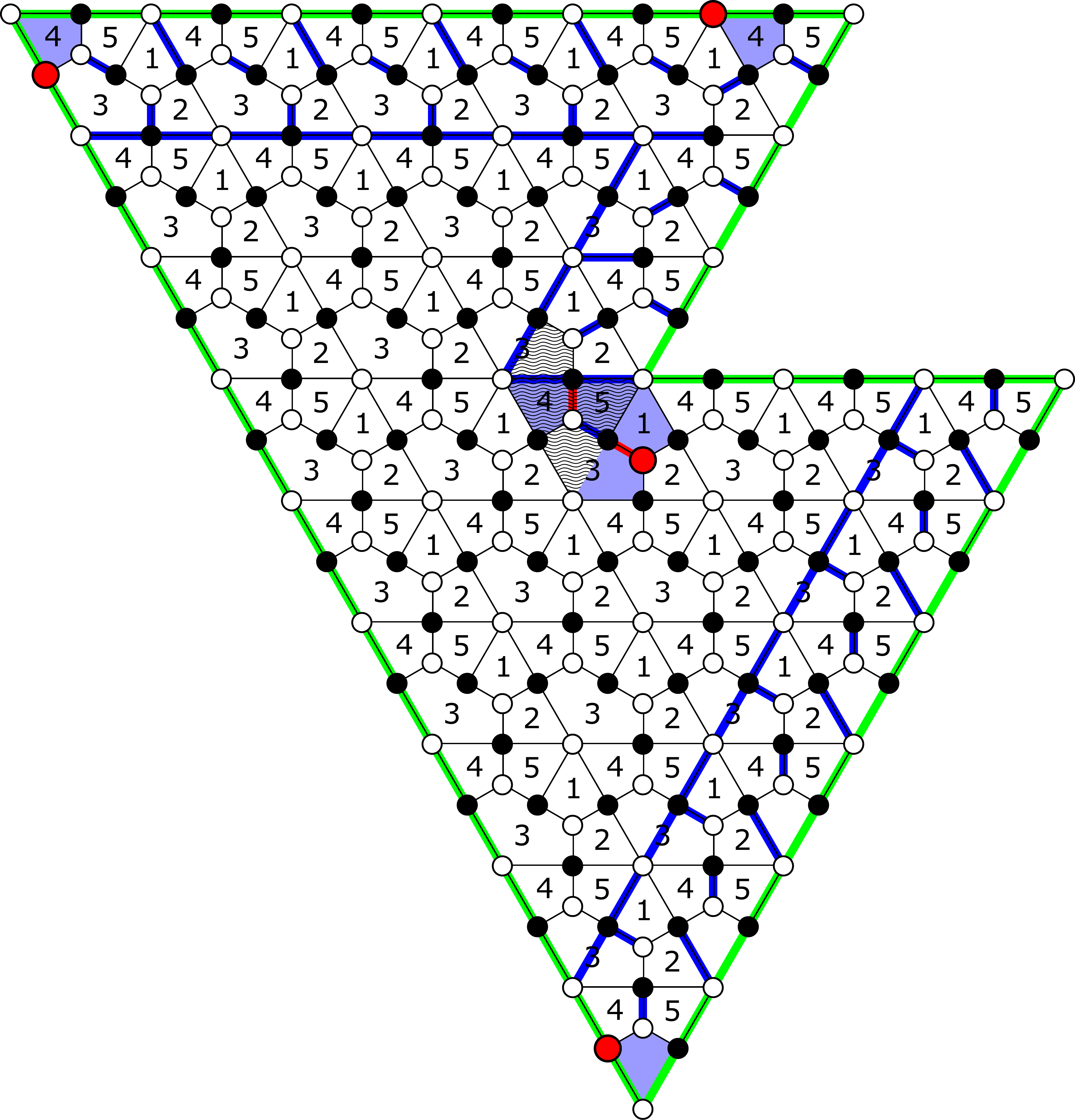}
\includegraphics[scale=0.11]{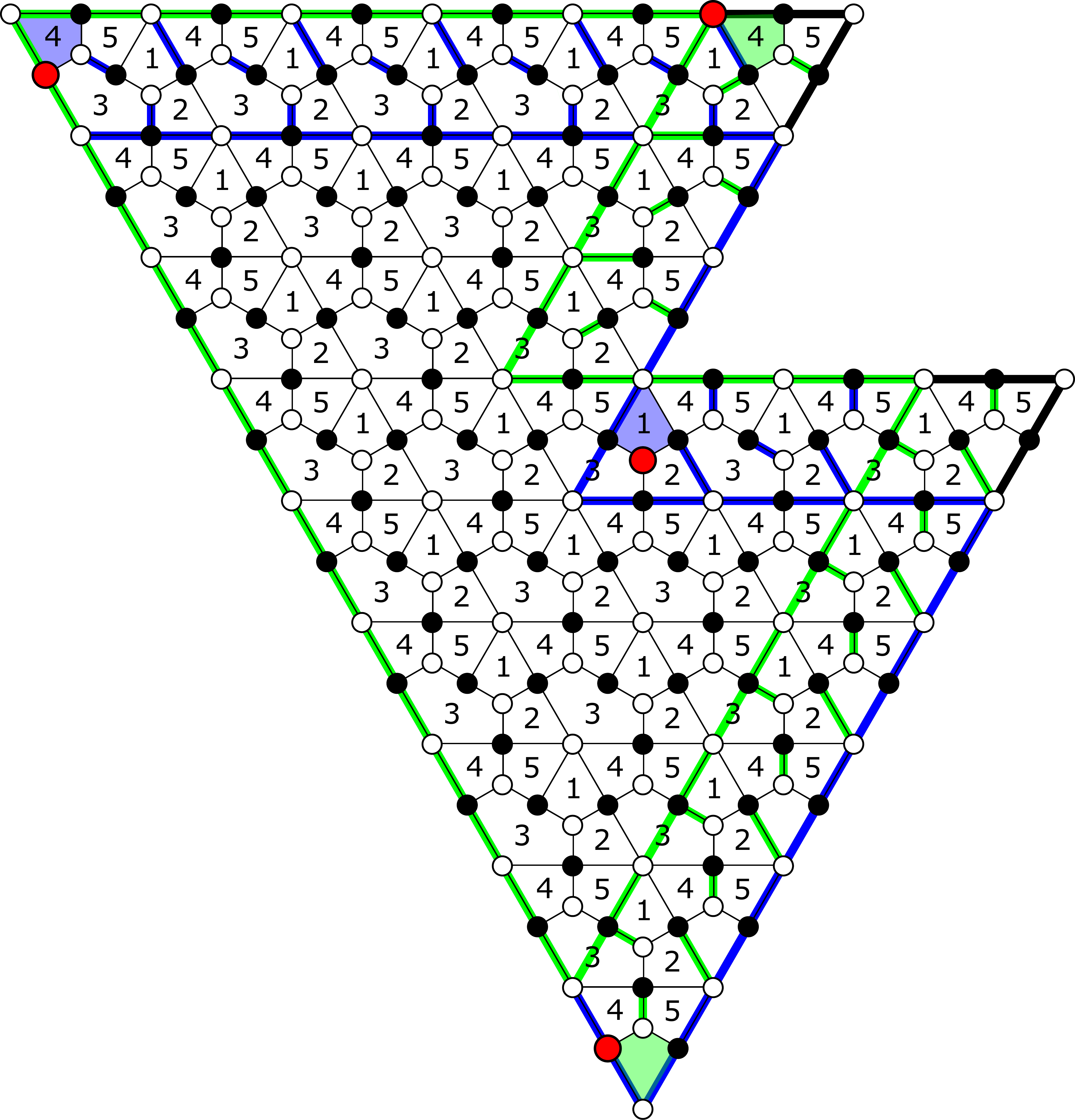}
\caption{Covering monomial for the case of $A^{n^2}B^{n(n-1)}$ with $n\geq1$, $k\geq3n-1$ and the special point removed. Left: $T(\{p_1,p_2\})$ and $T(\{p_3,p_4\})$. Middle: $T(\{p_1,p_2,p_3,p_4\})$ and $T(\emptyset)$. Right: $T(\{p_1,p_3\})$ and $T(\{p_2,p_4\})$.}
\label{fig:cov-mon+-++-R}
\end{figure}

For this choice of $p_1,p_2,p_3,p_4$, we have 
\begin{align*}
&\prod_{j\in T_1}x_j = x_4x_4,\qquad\prod_{j\in T_2}x_j = x_1x_3, \\
&\prod_{j\in T_3}x_j = 1,\qquad\prod_{j\in T_4}x_j = x_3x_3x_4\frac{x_3\cdot x_1x_3x_4x_5}{x_3x_3x_4x_5} = x_1x_3x_3x_4, \\
&\prod_{j\in T_5}x_j = x_3x_4,\qquad\prod_{j\in T_6}x_j = x_1x_4.
\end{align*}

We see that equation~\ref{eqn:T_i} holds in both cases.

\begin{remark}
As long as we fix the side and the color of a point $p_i$, the effect of removing $p_i$ is the same regardless of the shape of the contour, i.e. regardless of the signs of the other side lengths.  For instance, as shown in Figure~\ref{fig:same_effect}, the effects of removing $p_4$ in shapes $(+,-,+,+,-)$ and $(+,-,+,-,+)$ are the same.

\begin{figure}[h!]
\includegraphics[scale = 0.2]{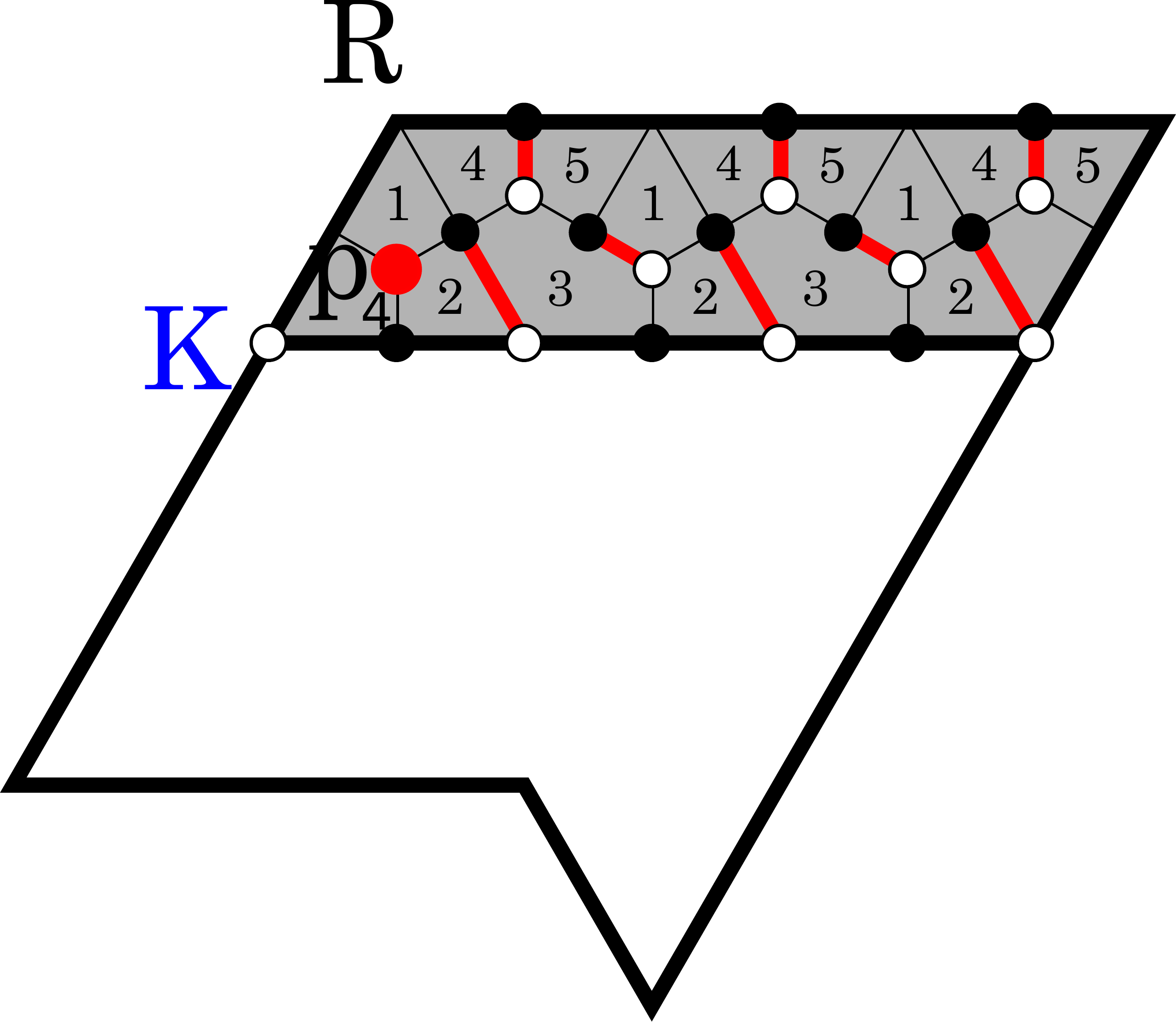}
\quad
\includegraphics[scale = 0.15]{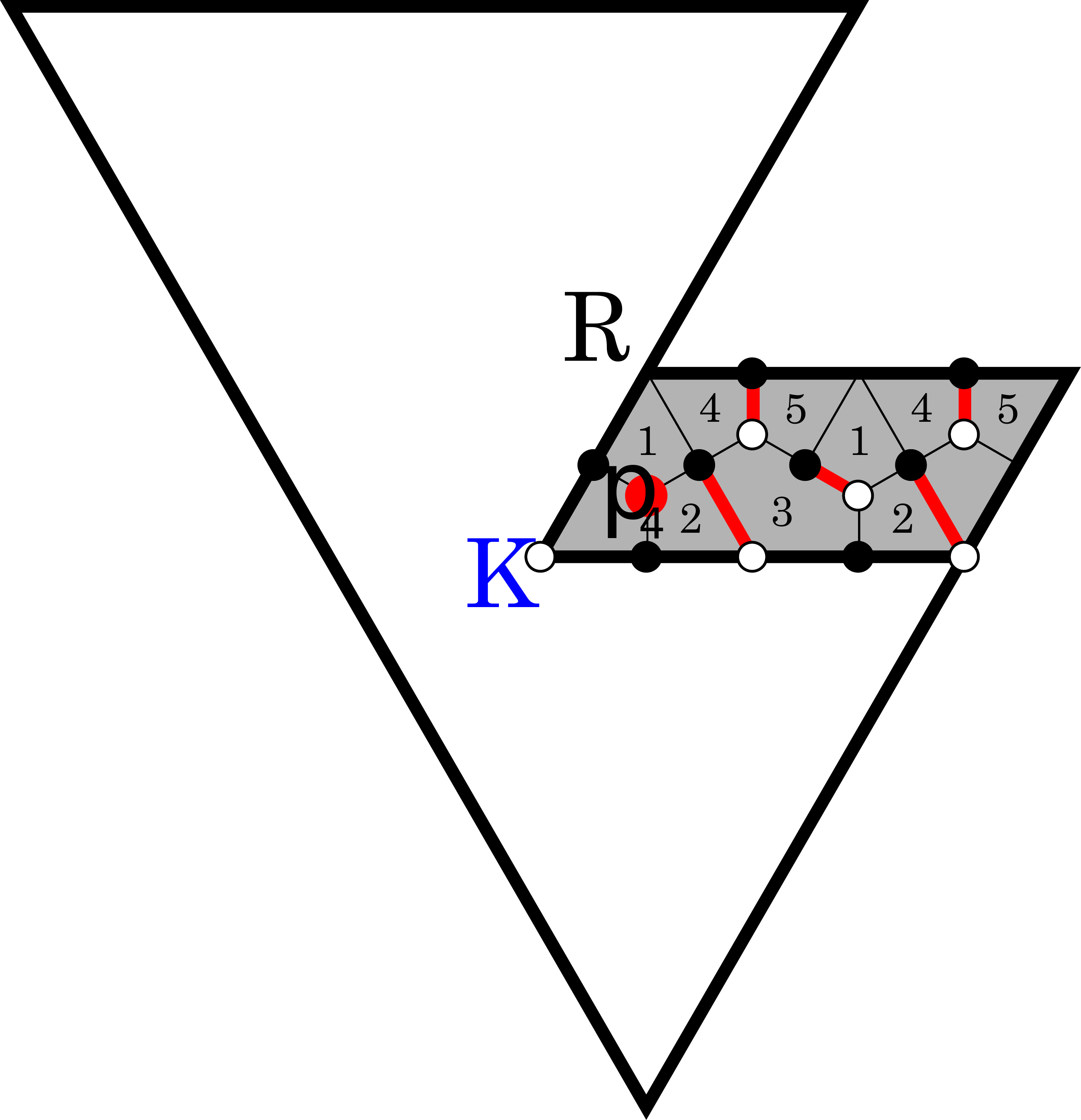}
\caption{The effects of removing $p_4$ in shape $(+,-,+,-,+)$ and $(+,-,+,+,-)$.}
\label{fig:same_effect}
\end{figure}

\end{remark}

\begin{remark}
The subgraphs in dP$_2$ quiver can look significantly different from those in dP$_3$ quiver. When side $c$ is long, there are many forced edges, which results in different shapes. See Figure~\ref{fig:long_c}.

\begin{figure}[h!]
\includegraphics[scale=0.12]{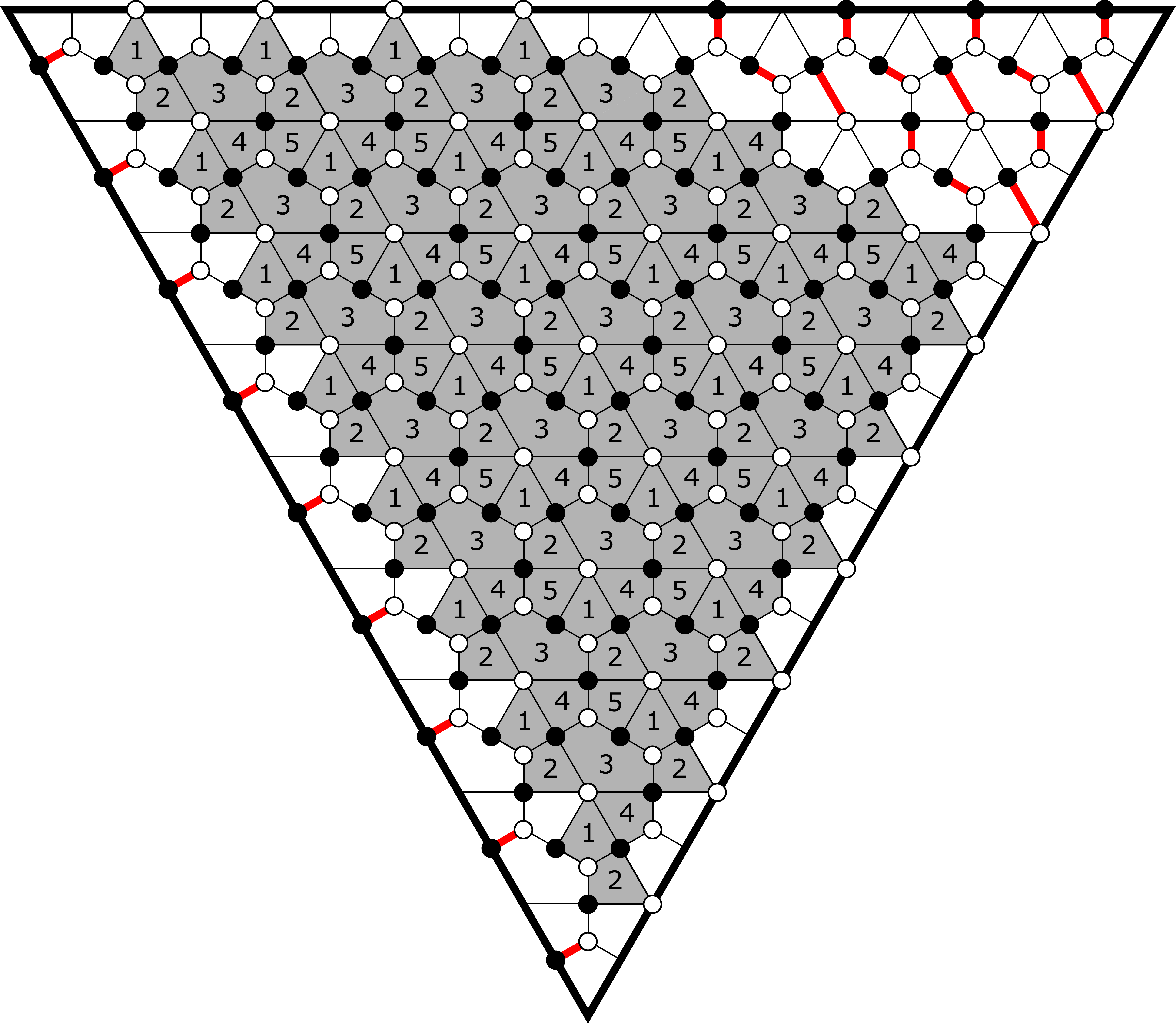}
\caption{Graph for $A^6B^4x_{17}$. Long edge $c$ results many in forced edges.}
\label{fig:long_c}
\end{figure}

\end{remark}

\section{Comparison with the Octahedron Recurrence}\label{sec:speyer}
David Speyer has given another combinatorial interpretation for the Laurent polynomials of the Somos-5 sequence in terms of the weight of some subgraphs of another brane tiling \cite{speyer2007perfect}. See Figure~\ref{fig:speyer} for the brane tiling and its corresponding quiver.

\begin{figure}[h!]
\includegraphics[scale=0.3]{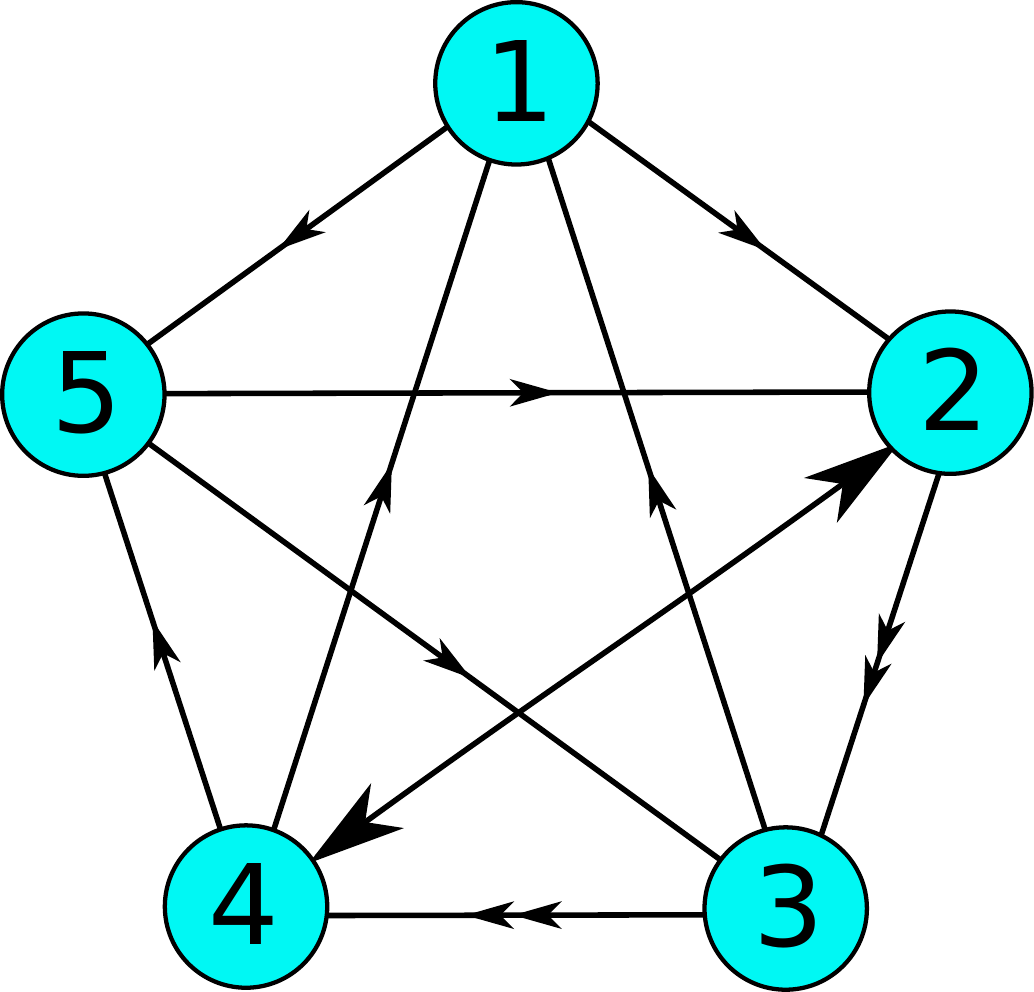}
\qquad
\includegraphics[scale=0.25]{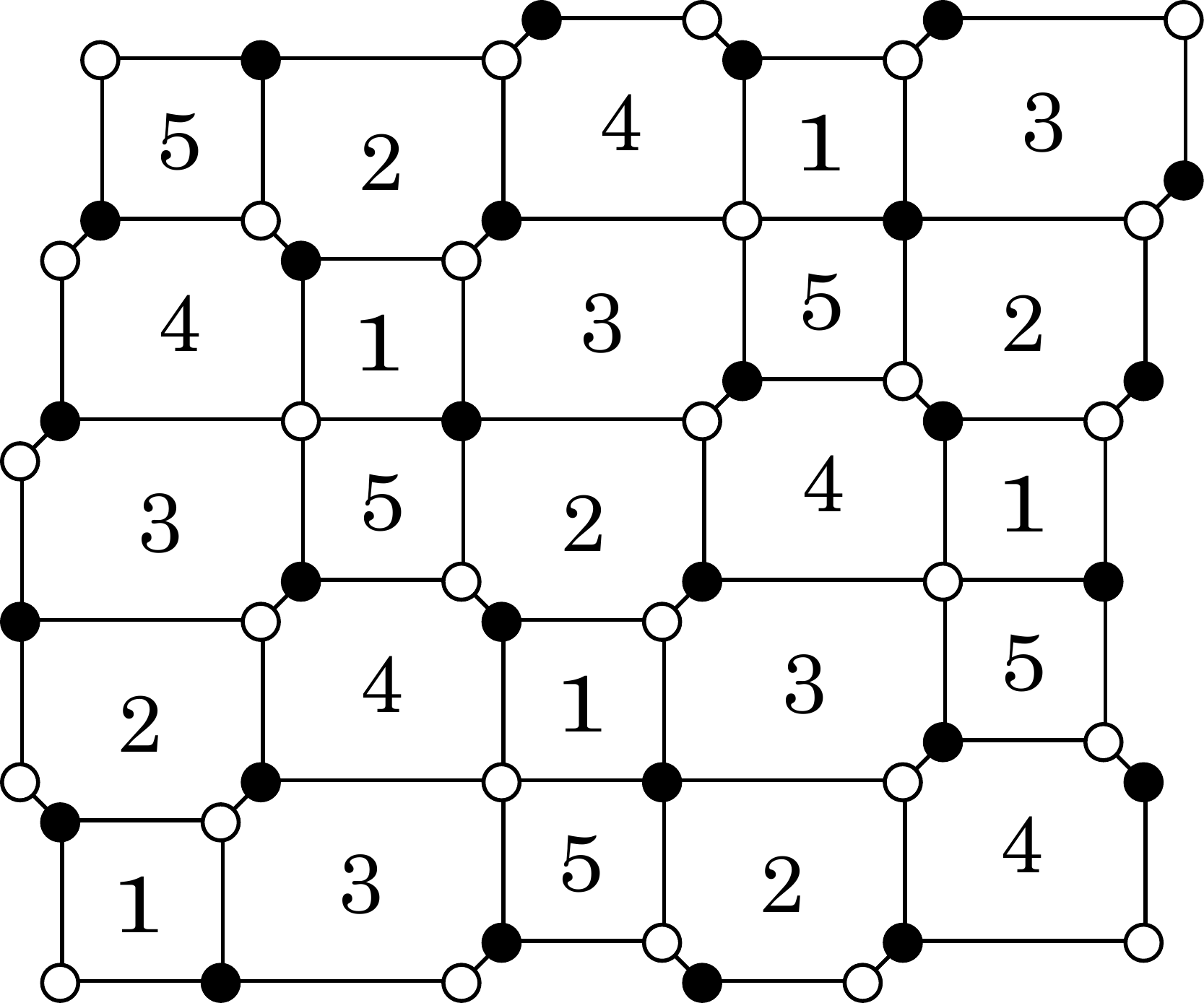}
\caption{The quiver and the brane tiling studied in \cite{speyer2007perfect}}
\label{fig:speyer}
\end{figure}

Notice that if we add a 2-cycle between vertex 2 and vertex 4 in our dP$_2$ quiver, we will obtain the quiver shown in Figure~\ref{fig:speyer}. However, it is hard to describe the transformation of these two brane tilings in a simple way. 

\begin{figure}[h!]
\includegraphics[scale=0.2]{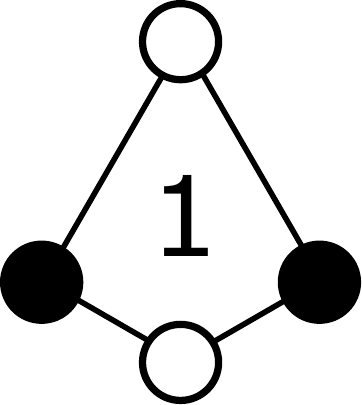}
\includegraphics[scale=0.2]{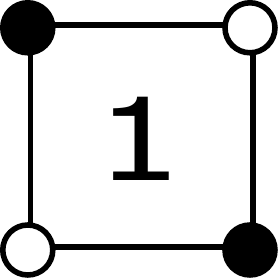}
\qquad
\includegraphics[scale=0.2]{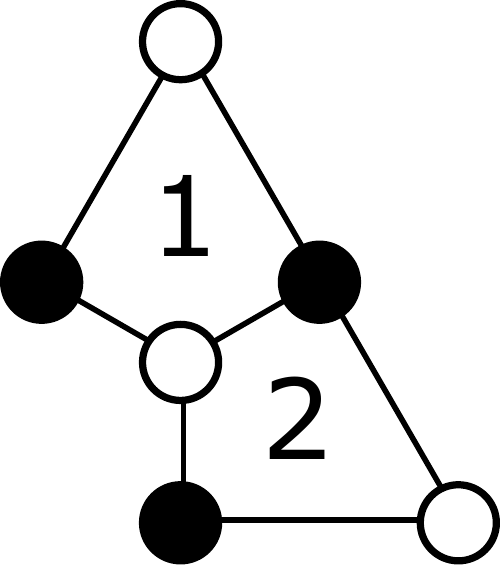}
\includegraphics[scale=0.2]{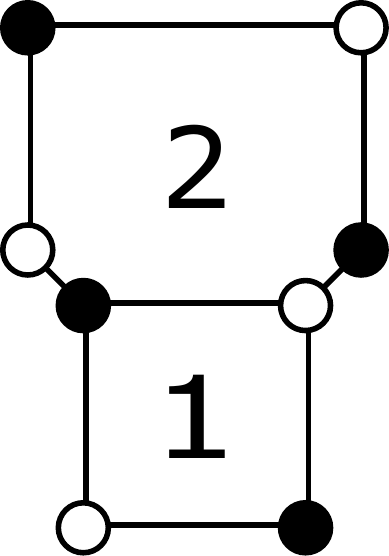}
\qquad
\includegraphics[scale=0.2]{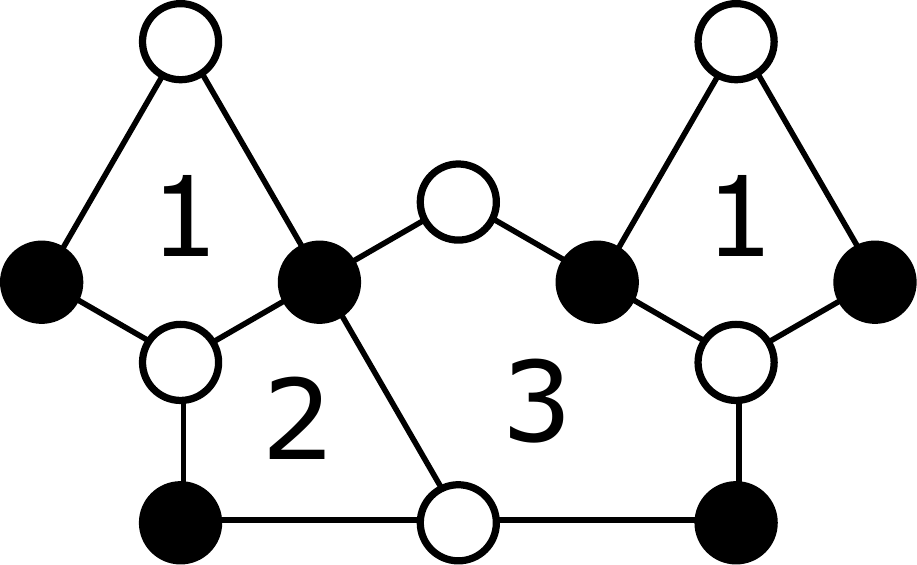}
\includegraphics[scale=0.2]{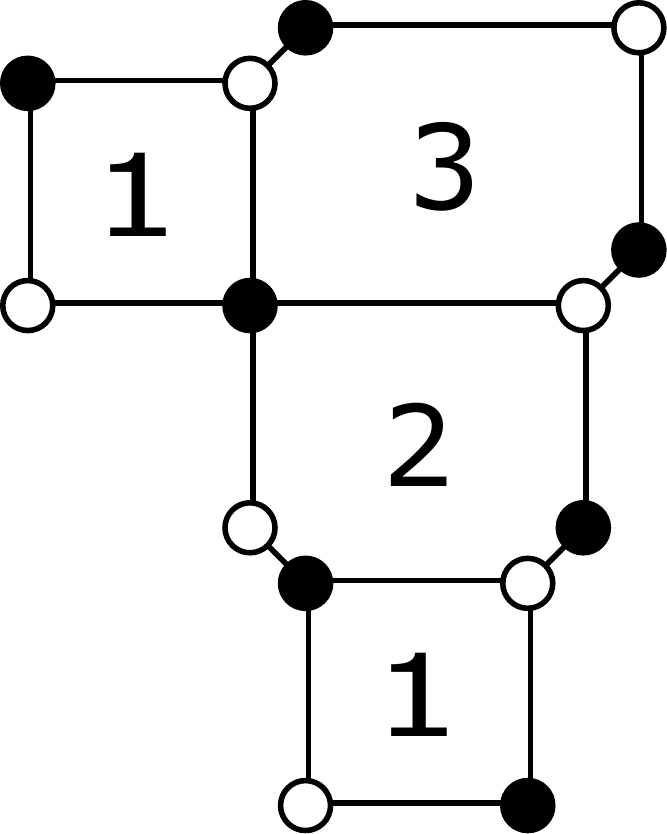}
\qquad
\includegraphics[scale=0.2]{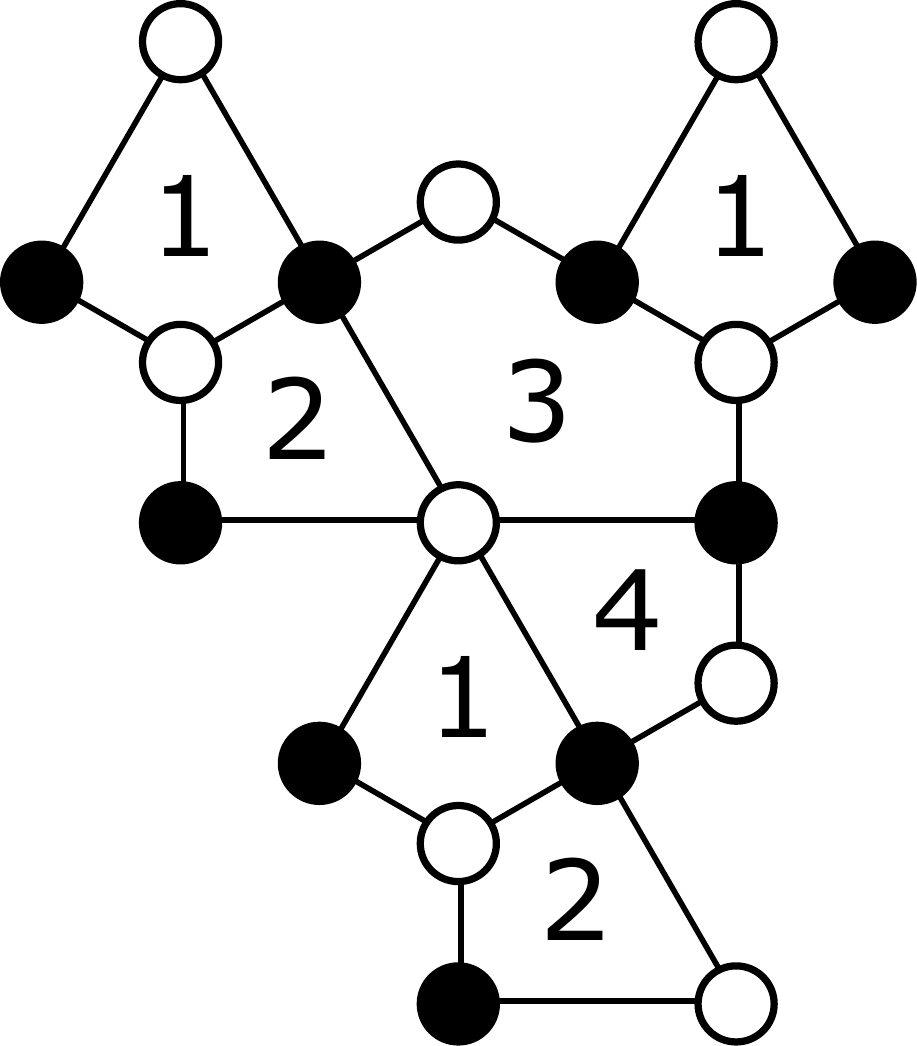}
\includegraphics[scale=0.2]{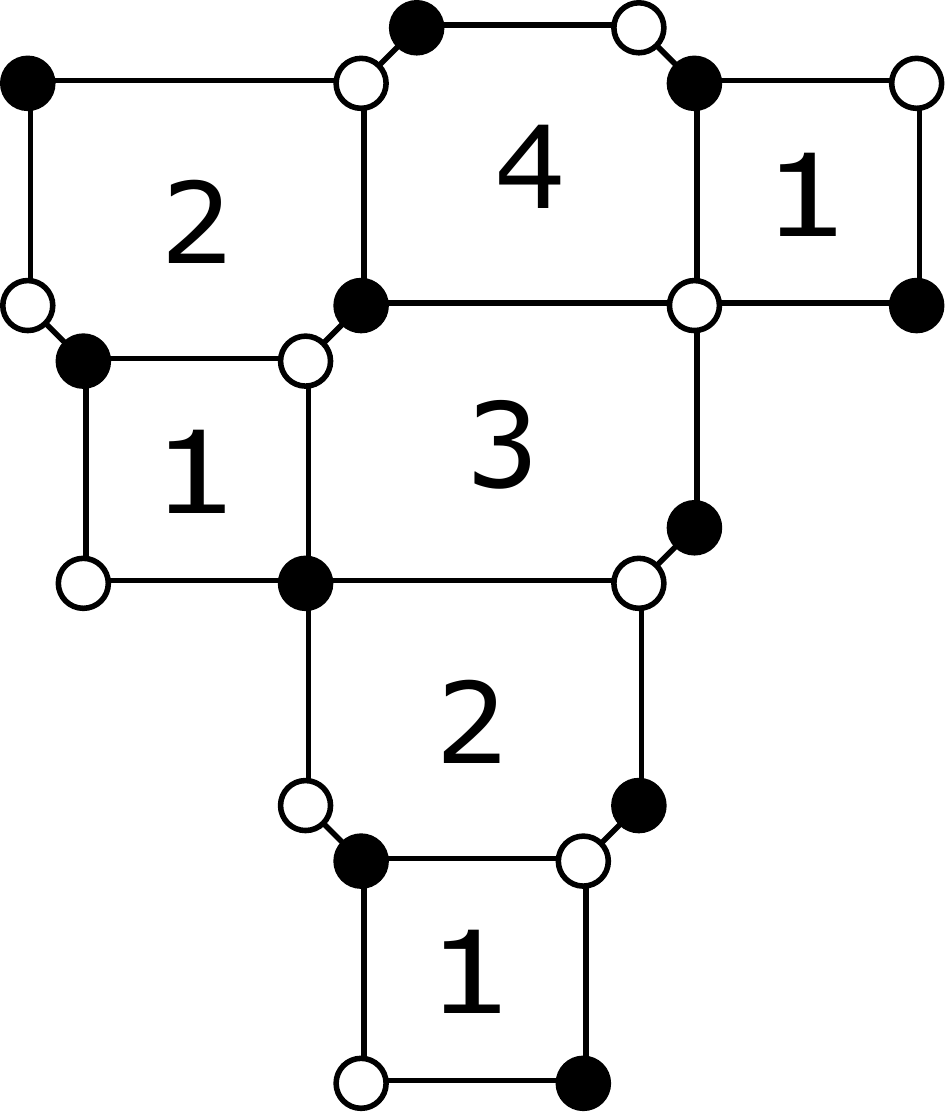}
\caption{subgraphs corresponding to terms $x_6,x_7,x_8,x_{9}$ in two different tilings}
\label{fig:speyer6-9}
\end{figure}

We provide a few terms of the Laurent polynomial of the Somos-5 sequence written as subgraphs of these two different brane tilings in Figure~\ref{fig:speyer6-9} and Figure~\ref{fig:speyer10-12}. As we can see, the blocks in each pair of subgraphs are similar but not exactly the same. Moreover, the subgraphs corresponding to $x_n$ in the dP$_2$ brane tiling are growing in two different directions (upper right and lower right) depending on the parity of $n$. But subgraphs in the tiling considered by Speyer have a growing pattern that seems to be unrelated to the parity of $n$. Therefore, we believe that these two problems regarding the two different tilings are sufficient different. There must exist some bijection between these subgraphs as we know how to generate them given $x_n$. Also, for each pair of subgraphs, there must exist some bijection between their perfect matchings. But as these two tilings are very different despite the similarity in the corresponding quivers, we leave such a bijection as an open question for future research.

\begin{figure}[h!]
\centering
\includegraphics[scale=0.2]{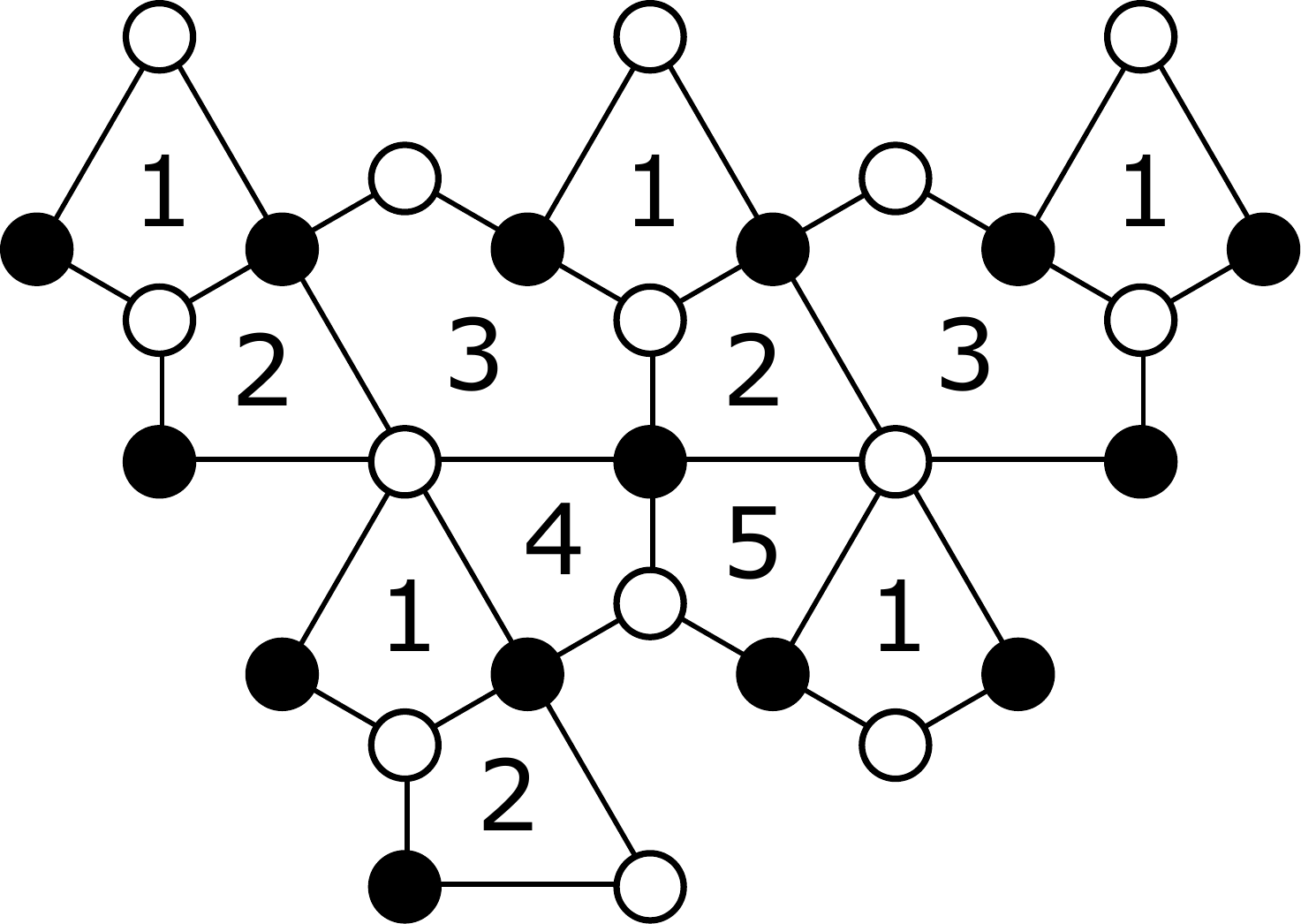}
\includegraphics[scale=0.2]{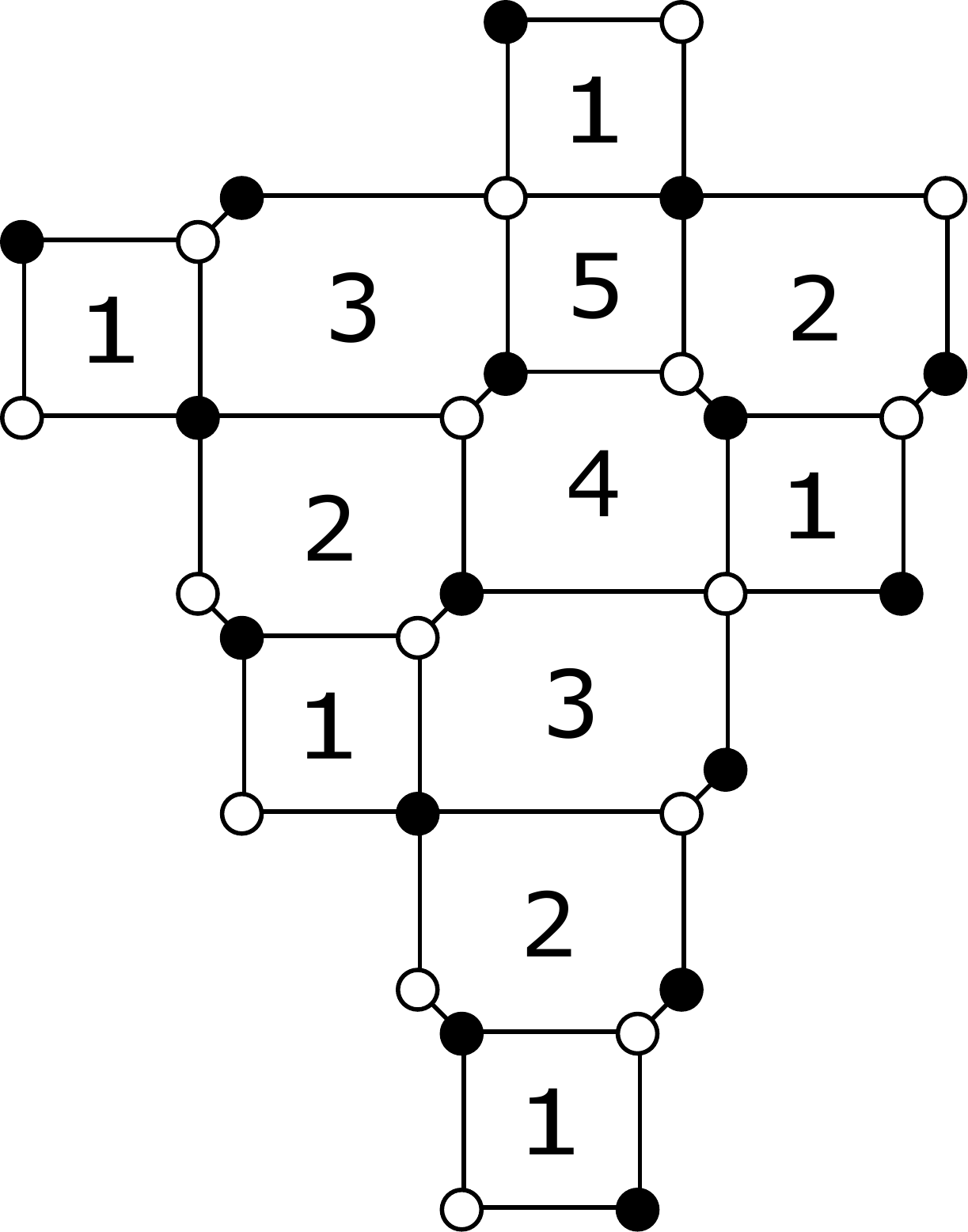}
\qquad
\includegraphics[scale=0.2]{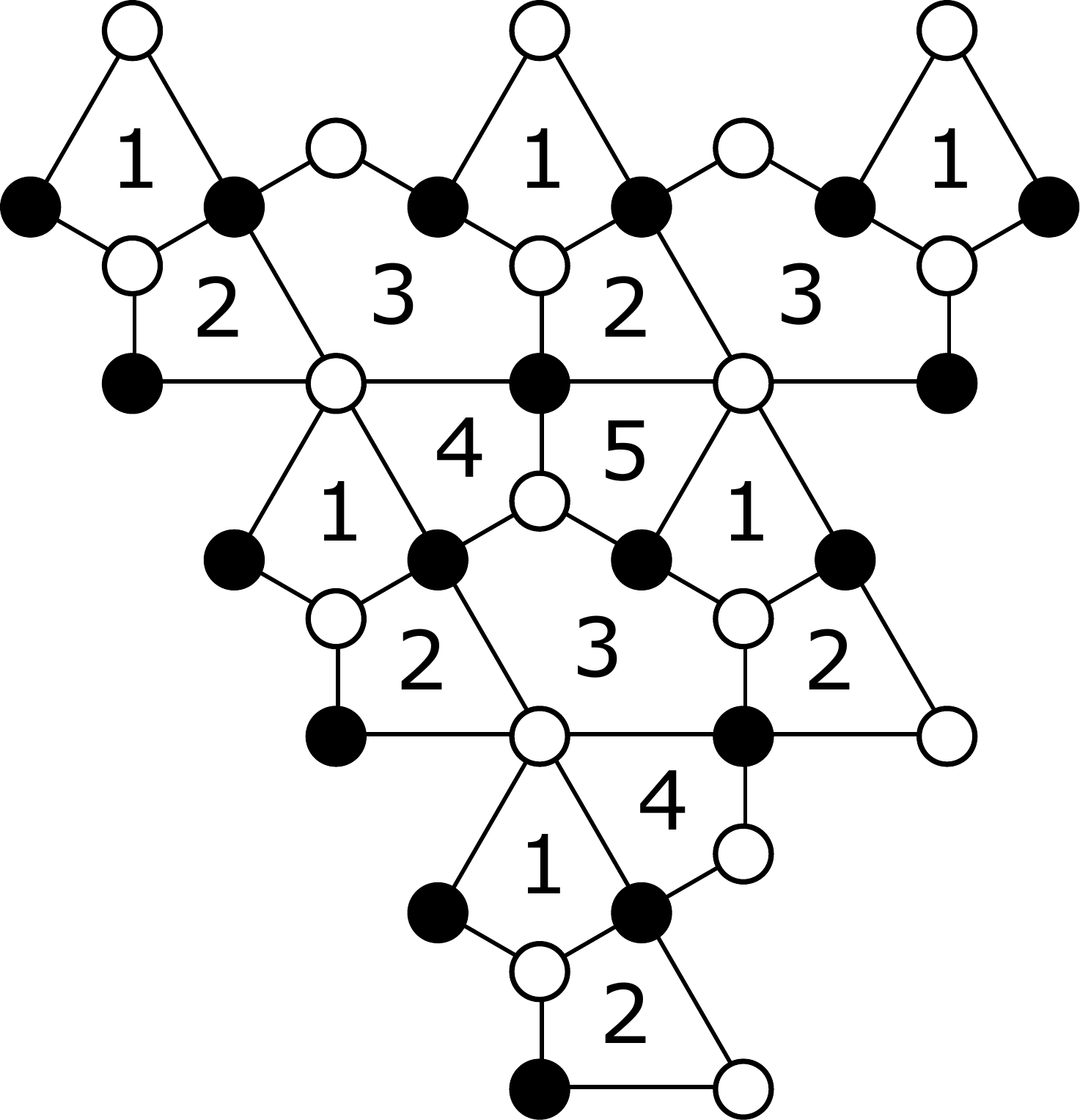}
\includegraphics[scale=0.2]{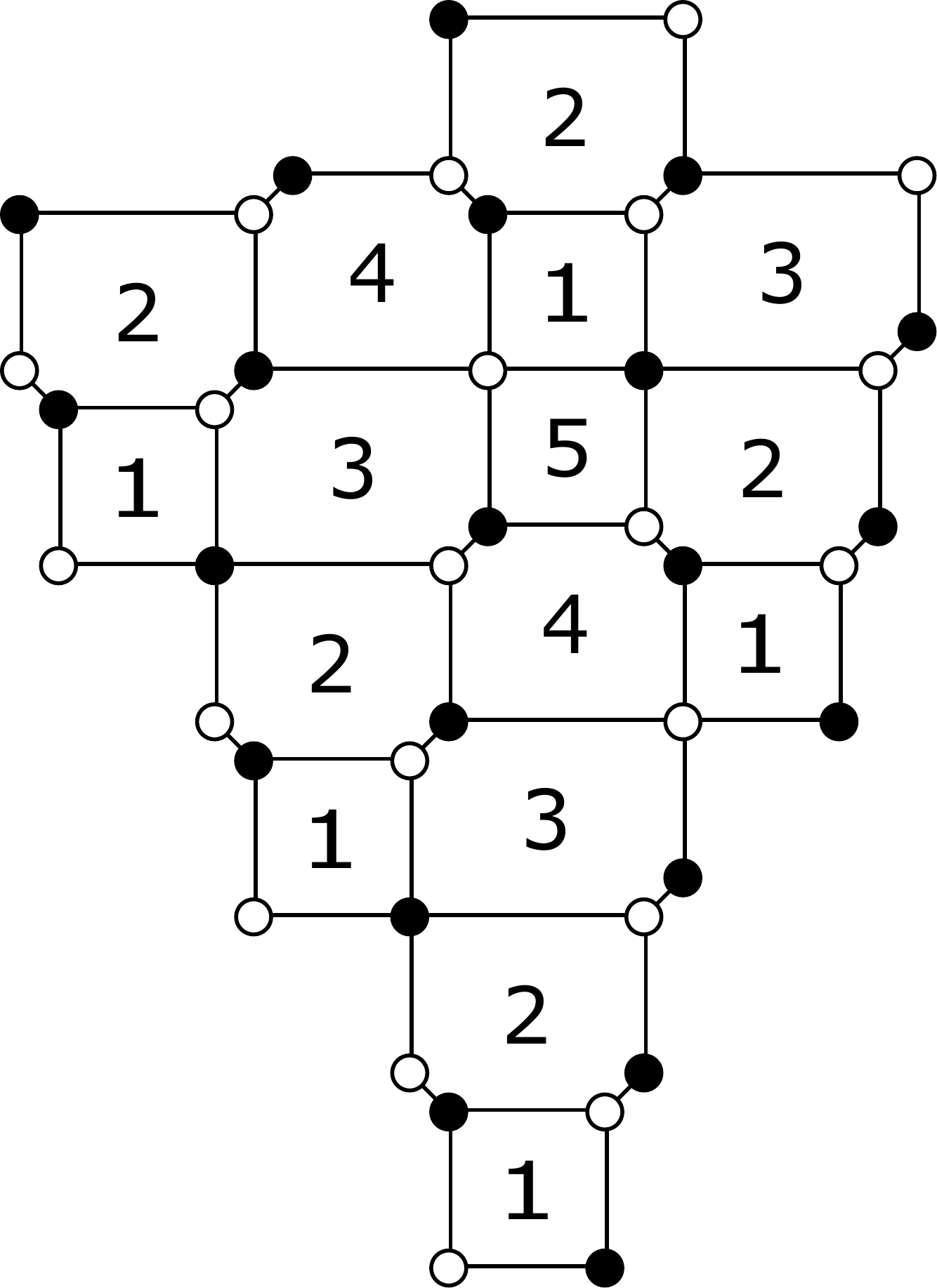}

\

\includegraphics[scale=0.2]{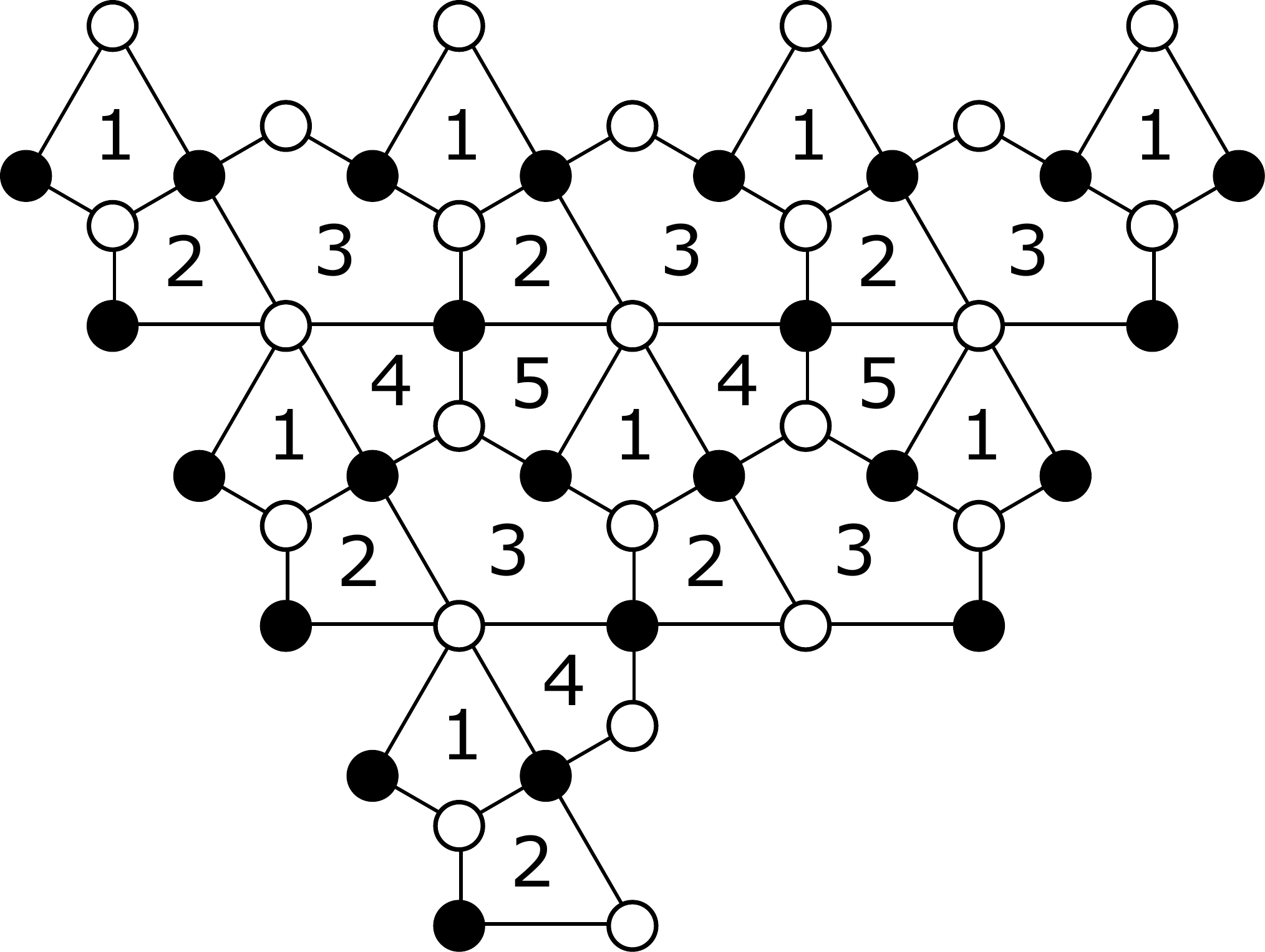}
\includegraphics[scale=0.2]{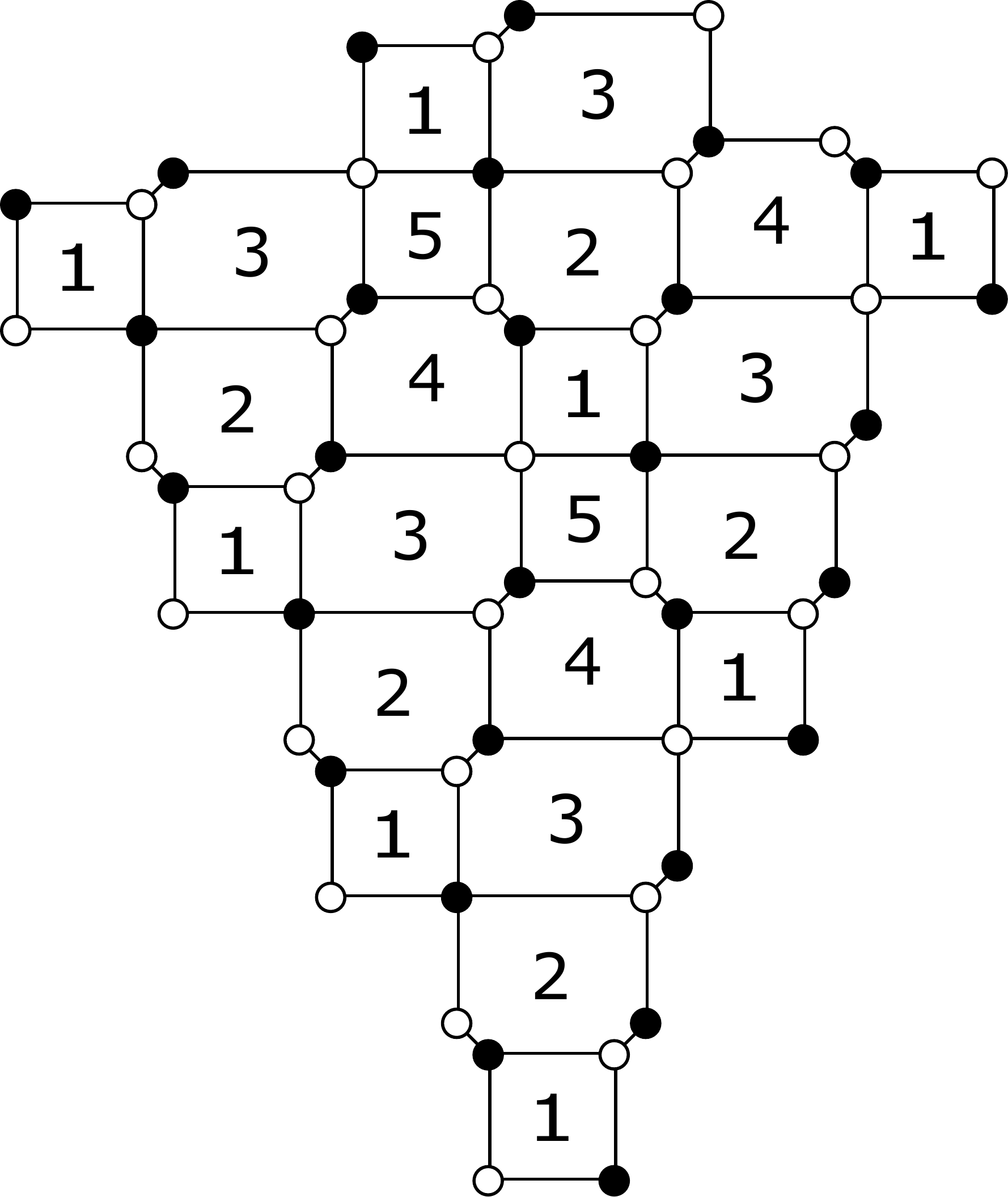}
\caption{subgraphs corresponding to terms $x_{10},x_{11},x_{12}$ in two different tilings}
\label{fig:speyer10-12}
\end{figure}

\section*{Acknowledgements}
This research was carried out as part of the 2016 REU program at the School of Mathematics at University of Minnesota, Twin Cities, and was supported by NSF RTG grant DMS-1148634 and by NSF grant DMS-1351590.  The authors would like to thank Sunita Chepuri and Elise DelMas for their comments and suggestions.  The authors are especially grateful to Gregg Musiker for his mentorship, support, and valuable advice.

\bibliographystyle{alpha}
\bibliography{main}
\setlength{\parindent}{0cm}

\newpage

\section{Appendix}
\label{sec:appendix}

Here are the data for other cases of Theorem~\ref{thm:contours}. We group these cases by the form of cluster variables. The way to use the appendix is shown in Section~\ref{sub:overview} and Section~\ref{sub:technique}.

\subsection{$A^{n^2}B^{n(n-1)}x_{2k}$, $n\geq1$, $k\geq2$}
\label{sec:caselisting1}
Recurrence we use: 
\begin{align*}
&(A^{n^2}B^{n(n-1)}x_{2k})(A^{(n-1)^2}B^{(n-1)(n-2)}x_{2k+2}) \\
=&(A^{(n-1)n}B^{(n-1)^2}x_{2k-1})(A^{(n-1)n}B^{(n-1)^2}x_{2k+3}) + (A^{(n-1)n}B^{(n-1)^2}x_{2k+1})^2
\end{align*}

Kuo's four points: $p_1,p_2,p_3,p_4$ are on edge $d,e,b,c$ respectively. 
\subsubsection{Case 1}
$k \geq 3n-1.$ Non-alternating Kuo. Shape $(+,-,+,+,-)$.

$$G := \hatcal{G}\left(k-1+n, -\left\lceil \frac{k+5n-5}{2} \right\rceil, 2n-2, \left\lfloor \frac{k-3n+3}{2} \right\rfloor, n-k-1\right)$$
\begin{align*}
-\{p_1\} (W): &K\rightarrow(0,-1,+1,-1,+1)-R\\
		      &R\rightarrow(0,0,+1,0,+1)-K\\
-\{p_2\} (B): &\rightarrow (-1,0,0,-1,+1)\\
-\{p_3\} (B): &\rightarrow (-1,+1,-1,0,0)\\
-\{p_4\} (W): &K\rightarrow(0,0,0,0,0)-R\\
			  &R\rightarrow(0,+1,0,+1,0)-K
\end{align*}

\begin{align*}
G =& A^{(n-1)n} B^{(n-1)^2} x_{2k+3} \\
G-\{p_1, p_2, p_3, p_4\} =& A^{(n-1)n} B^{(n-1)^2} x_{2k-1} \\
G-\{p_1, p_3\} =& A^{(n-1)n} B^{(n-1)^2} x_{2k+1} \\
G-\{p_2, p_4\} =& A^{(n-1)n} B^{(n-1)^2} x_{2k+1} \\
G-\{p_1, p_2\} =& A^{n^2} B^{n^2-n} x_{2k} \\
G-\{p_3, p_4\} =& A^{(n-1)^2} B^{(n-1)(n-2)} x_{2k+2}
\end{align*}

$G-K$ (Special vertex kept): let $p_1$ be the topmost (W) point on edge d, $p_2$ be the leftmost (B) point on edge e, $p_3$ be the bottommost (B) point on edge b, $p_4$ be the special vertex.
\begin{align*}
T(\emptyset) = 1, \quad T(\{p_1,p_2,p_3,p_4\}) = x_3x_3x_4x_4, \quad T(\{p_1,p_3\}) = x_3x_4, \\
T(\{p_2,p_4\}) = x_3x_4, \quad T(\{p_1,p_2\}) = x_4x_4, \quad T(\{p_3,p_4\}) = x_3x_3.
\end{align*}

$G-R$ (Special vertex removed): let $p_1$ be the topmost (W) point on edge d, $p_2$ be the leftmost (B) point on edge e, $p_3$ be the bottommost (B) point on edge b, $p_4$ be the (W) vertex bordering the 1-block below the special vertex.
\begin{align*}
T(\emptyset) = 1, \quad T(\{p_1,p_2,p_3,p_4\}) = x_1x_3x_4x_4, \quad T(\{p_1,p_3\}) = x_3x_4, \\
T(\{p_2,p_4\}) = x_1x_4, \quad T(\{p_1,p_2\}) = x_4x_4, \quad T(\{p_3,p_4\}) = x_1x_3.
\end{align*}

\subsubsection{Case 2}
$n+1 \leq k \leq 3n-2$. Unbalanced Kuo. Shape $(+,-,+,-,-)$.

When $(k+2)+(n-1)$ is odd, let $$G := \hatcal{G}\left(k-1+n, -\left\lceil \frac{k+5n-5}{2} \right\rceil, 2n-2+1, \left\lfloor \frac{k-3n+3}{2} \right\rfloor, n-k-1+1\right)-K$$

When $(k+2)+(n-1)$ is even, let $$G := \hatcal{G}\left(k-1+n, -\left\lceil \frac{k+5n-5}{2} \right\rceil-1, 2n-2+1, \left\lfloor \frac{k-3n+3}{2} \right\rfloor-1, n-k-1+1\right)-R$$

\begin{align*}
-\{p_1\} (B): &K\rightarrow(0,0,-1,0,-1)-R\\
		      &R\rightarrow(0,+1,-1,+1,-1)-K\\
-\{p_2\} (B): &\rightarrow (-1,0,0,-1,+1)\\
-\{p_3\} (B): &\rightarrow (-1,+1,-1,0,0)\\
-\{p_4\} (W): &K\rightarrow(0,0,0,0,0)-R\\
			  &R\rightarrow(0,+1,0,+1,0)-K
\end{align*}

\begin{align*}
G-\{p_1\} =& A^{(n-1)n} B^{(n-1)^2} x_{2k+3} \\
G-\{p_2, p_3, p_4\} =& A^{(n-1)n} B^{(n-1)^2} x_{2k-1} \\
G-\{p_3\} =& A^{(n-1)n} B^{(n-1)^2} x_{2k+1} \\
G-\{p_1, p_2, p_4\} =& A^{(n-1)n} B^{(n-1)^2} x_{2k+1} \\
G-\{p_2\} =& A^{n^2} B^{n^2-n} x_{2k} \\
G-\{p_1, p_3, p_4\} =& A^{(n-1)^2} B^{(n-1)(n-2)} x_{2k+2}
\end{align*}

$G-K$ (Special vertex kept): let $p_1$ be the bottommost (B) point on edge d, $p_2$ be the leftmost (B) point on edge e, $p_3$ be the bottommost (B) point on edge b, $p_4$ be the special vertex.
\begin{align*}
T(\{p_1\}) = x_3, \quad T(\{p_2,p_3,p_4\}) = x_3x_3x_4, \quad T(\{p_3\}) = x_3, \\
T(\{p_1,p_2,p_4\}) = x_3x_3x_4, \quad T(\{p_2\}) = x_4, \quad T(\{p_1,p_3,p_4\}) = x_3x_3x_3.
\end{align*}

$G-R$ (Special vertex removed): let $p_1$ be the bottommost (B) point on edge d, $p_2$ be the leftmost (B) point on edge e, $p_3$ be the bottommost (B) point on edge b, $p_4$ be the (W) vertex bordering the 1-block below the special vertex.
\begin{align*}
T(\{p_1\}) = x_4, \quad T(\{p_2,p_3,p_4\}) = x_1x_3x_4, \quad T(\{p_3\}) = x_3x_3, \\
T(\{p_1,p_2,p_4\}) = x_3x_3x_3x_4, \quad T(\{p_2\}) = x_4, \quad T(\{p_1,p_3,p_4\}) = x_1x_3x_3.
\end{align*}

\subsubsection{Case 3}
$k \leq n$. Balanced Kuo. Shape $(+,-,+,-,+).$

When $(k+2)+(n-1)$ is odd, let $$G := \hatcal{G}\left(k-1+n-1, -\left\lceil \frac{k+5n-5}{2} \right\rceil, 2n-2+1, \left\lfloor \frac{k-3n+3}{2} \right\rfloor-1, n-k-1+2\right)-K$$

When $(k+2)+(n-1)$ is even, let $$G := \hatcal{G}\left(k-1+n-1, -\left\lceil \frac{k+5n-5}{2} \right\rceil-1, 2n-2+1, \left\lfloor \frac{k-3n+3}{2} \right\rfloor-2, n-k-1+2\right)-R$$

\begin{align*}
-\{p_1\} (B): &K\rightarrow(0,0,-1,0,-1)-R\\
		      &R\rightarrow(0,+1,-1,+1,-1)-K\\
-\{p_2\} (W): &\rightarrow (+1,0,0,+1,-1)\\
-\{p_3\} (B): &\rightarrow (-1,+1,-1,0,0)\\
-\{p_4\} (W): &K\rightarrow(0,0,0,0,0)-R\\
			  &R\rightarrow(0,+1,0,+1,0)-K
\end{align*}

\begin{align*}
G-\{p_1,p_2\} =& A^{(n-1)n} B^{(n-1)^2} x_{2k+3} \\
G-\{p_3, p_4\} =& A^{(n-1)n} B^{(n-1)^2} x_{2k-1} \\
G-\{p_2, p_3\} =& A^{(n-1)n} B^{(n-1)^2} x_{2k+1} \\
G-\{p_1, p_4\} =& A^{(n-1)n} B^{(n-1)^2} x_{2k+1} \\
G=& A^{n^2} B^{n^2-n} x_{2k} \\
G-\{p_1,p_2,p_3, p_4\} =& A^{(n-1)^2} B^{(n-1)(n-2)} x_{2k+2}
\end{align*}

$G-K$ (Special vertex kept): let $p_1$ be the second from top (B) point on edge d, $p_2$ be the second from left (W) point on edge e, $p_3$ be the second from bottom (B) point on edge b, $p_4$ be the special vertex.
\begin{align*}
T(\{p_1,p_2\}) = x_1x_3, \quad T(\{p_3,p_4\}) = x_3x_5, \quad T(\{p_2,p_3\}) = x_3x_5, \\
T(\{p_1,p_4\}) = x_1x_3, \quad T(\emptyset) = 1, \quad T(\{p_1,p_2,p_3,p_4\}) = x_1x_3x_3x_5.
\end{align*}

$G-R$ (Special vertex removed): let $p_1$ be the second from bottom (B) point on edge d, $p_2$ be the second from right (W) point on edge e, $p_3$ be the second from bottom (B) point on edge b, $p_4$ be the (W) vertex bordering the 1-block below the special vertex.
\begin{align*}
T(\{p_1,p_2\}) = x_1x_3, \quad T(\{p_3,p_4\}) = x_1x_5, \quad T(\{p_2,p_3\}) = x_3x_5, \\
T(\{p_1,p_4\}) = x_1x_1, \quad T(\emptyset) = 1, \quad T(\{p_1,p_2,p_3,p_4\}) = x_1x_1x_3x_5.
\end{align*}

%%%%%%%%%%%%%%%%%%%%%%%%%%%%%%%%%%%%%%%%%%%%%%%%%%%%%
\subsection{$A^{n^2+n}B^{n^2}x_{2k-1}$, $n\geq1$, $k\geq3$}
Recurrence we use: 
\begin{align*}
&(A^{n^2+n}B^{n^2}x_{2k-1})(A^{(n-1)^2+(n-1)}B^{(n-1)^2}x_{2k+1}) \\
=&(A^{n^2}B^{n(n-1)}x_{2k-2})(A^{n^2}B^{n(n-1)}x_{2k+2}) + (A^{n^2}B^{n(n-1)}x_{2k})^2
\end{align*}

Kuo's four points: $p_1,p_2,p_3,p_4$ are on edge $d,e,b,c$ respectively.

The effect of removing $p_1,p_2,p_3,p_4$ and the sets used in the proof of covering monomial are the same as in Section~\ref{sec:caselisting1}. 

\subsubsection{Case 1}
$k \geq 3n+1$. Non-alternating Kuo. Shape $(+,-,+,+,-)$.

$$G := \hatcal{G}\left(k-1+n, -\left\lceil \frac{k+5n-3}{2} \right\rceil, 2n-1, \left\lfloor \frac{k-3n+1}{2} \right\rfloor, n-k\right).$$
\begin{align*}
G =& A^{n^2} B^{n^2-n} x_{2k+2} \\
G-\{p_1,p_2, p_3, p_4\} =& A^{n^2} B^{n^2-n} x_{2k-2} \\
G-\{p_1,p_3\} =& A^{n^2} B^{n^2-n} x_{2k} \\
G-\{p_2, p_4\} =& A^{n^2} B^{n^2-n} x_{2k} \\
G-\{p_1,p_2\} =& A^{n^2+n} B^{n^2} x_{2k-1} \\
G-\{p_3, p_4\} =& A^{(n-1)^2+(n-1)} B^{(n-1)^2} x_{2k+1}
\end{align*}

\subsubsection{Case 2}
$n+2 \leq k \leq 3n$. Unbalanced Kuo. Shape $(+,-,+,-,-)$.

When $(k+1)+n$ is odd, let $$G := \hatcal{G}\left(k-1+n, -\left\lceil \frac{k+5n-3}{2} \right\rceil, 2n-1+1, \left\lfloor \frac{k-3n+1}{2} \right\rfloor, n-k+1\right)-K$$

When $(k+1)+n$ is even, let $$G := \hatcal{G}\left(k-1+n, -\left\lceil \frac{k+5n-3}{2} \right\rceil-1, 2n-1+1, \left\lfloor \frac{k-3n+1}{2} \right\rfloor-1, n-k-1+1\right)-R$$

\begin{align*}
G-\{p_1\} =& A^{n^2} B^{n^2-n} x_{2k+2} \\
G-\{p_2, p_3, p_4\} =& A^{n^2} B^{n^2-n} x_{2k-2} \\
G-\{p_3\} =& A^{n^2} B^{n^2-n} x_{2k} \\
G-\{p_1, p_2, p_4\} =& A^{n^2} B^{n^2-n} x_{2k} \\
G-\{p_2\} =& A^{n^2+n} B^{n^2} x_{2k-1} \\
G-\{p_1, p_3, p_4\} =& A^{(n-1)^2+(n-1)} B^{(n-1)^2} x_{2k+1}
\end{align*}

\subsubsection{Case 3}
$3\leq k \leq n+1$. Balanced Kuo. Shape $(+,-,+,-,+)$.

When $(k+1)+n$ is odd, let $$G := \hatcal{G}\left(k-1+n-1, -\left\lceil \frac{k+5n-3}{2} \right\rceil, 2n-1+1, \left\lfloor \frac{k-3n+1}{2} \right\rfloor-1, n-k+2\right)-K$$

When $(k+1)+n$ is even, let $$G := \hatcal{G}\left(k-1+n-1, -\left\lceil \frac{k+5n-3}{2} \right\rceil-1, 2n-1+1, \left\lfloor \frac{k-3n+1}{2} \right\rfloor-2, n-k+2\right)-R$$

\begin{align*}
G-\{p_1,p_2\} =& A^{n^2} B^{n^2-n} x_{2k+2} \\
G-\{p_3, p_4\} =& A^{n^2} B^{n^2-n} x_{2k-2} \\
G-\{p_2, p_3\} =& A^{n^2} B^{n^2-n} x_{2k} \\
G-\{p_1, p_4\} =& A^{n^2} B^{n^2-n} x_{2k} \\
G=& A^{n^2+n} B^{n^2} x_{2k-1} \\
G-\{p_1,p_2,p_3, p_4\} =& A^{(n-1)^2+(n-1)} B^{(n-1)^2} x_{2k+1}
\end{align*}

%%%%%%%%%%%%%%%%%%%%%%%%%%%%%%%%%%%%%%%%%%%%%%%%%%%%%
\subsection{$A^{n^2}B^{n(n-1)}x_{2k}$, $n\leq-1$, $k\geq2$}
\label{sec:caselisting3}
Recurrence we use: 
\begin{align*}
&(A^{n^2}B^{n(n-1)}x_{2k})(A^{(n+1)^2}B^{n(n+1)}x_{2k+2}) \\
=&(A^{n(n+1)}B^{n^2}x_{2k-1})(A^{n(n+1)}B^{n^2}x_{2k+3}) + (A^{n(n+1)}B^{n^2}x_{2k+1})^2
\end{align*}

Kuo's four points: $p_1,p_2,p_3,p_4$ are on edge $d,a,b,c$ respectively.

\subsubsection{Case 1}
$k \geq 1-5n.$ Non-alternating Kuo. Shape $(+,-,-,+,-)$.

$$G := \hatcal{G}\left(k+n, -\left\lceil \frac{k+5n}{2} \right\rceil, 2n, \left\lfloor \frac{k-3n}{2} \right\rfloor, n-k\right)$$

\begin{align*}
-\{p_1\} (W): &K\rightarrow(0,-1,+1,-1,+1)-R\\
		      &R\rightarrow(0,0,+1,0,+1)-K\\
-\{p_2\} (W): &\rightarrow (-1,+1,0,0,+1)\\
-\{p_3\} (B): &\rightarrow (-1,+1,-1,0,0)\\
-\{p_4\} (B): &K\rightarrow(0,-1,0,-1,0)-R\\
			  &R\rightarrow(0,0,0,0,0)-K
\end{align*}

\begin{align*}
G =& A^{n^2+n} B^{n^2} x_{2k+3} \\
G-\{p_1, p_2, p_3, p_4\} =& A^{n^2+n} B^{n^2} x_{2k-1} \\
G-\{p_1, p_3\} =& A^{n^2+n} B^{n^2} x_{2k+1} \\
G-\{p_2, p_4\} =& A^{n^2+n} B^{n^2} x_{2k+1} \\
G-\{p_1, p_4\} =& A^{(n+1)^2} B^{(n+1)^2-(n+1)} x_{2k+2} \\
G-\{p_2, p_3\} =& A^{n^2} B^{n^2-n} x_{2k}
\end{align*}

$G-K$ (Special vertex kept): let $p_1$ be the topmost (W) point on edge d, $p_2$ be the topmost (W) point on edge a (not in a forced matching), $p_3$ be the bottommost (B) point on edge b (not in a forced matching), $p_4$ be the (B) point on the edge between the 4-block and 5-block above the special vertex.
\begin{align*}
T(\emptyset) = 1, \quad T(\{p_1,p_2,p_3,p_4\}) = x_4x_4x_5x_5, \quad T(\{p_1,p_3\}) = x_4x_5, \\
T(\{p_2,p_4\}) = x_4x_5, \quad T(\{p_1,p_4\}) = x_4x_4, \quad T(\{p_2,p_3\}) = x_5x_5.
\end{align*}

$G-R$ (Special vertex removed): let $p_1$ be the topmost (W) point on edge d, $p_2$ be the topmost (W) point on edge a (not in a forced matching), $p_3$ be the bottommost (B) point on edge b (in a forced matching), $p_4$ be the (B) point on the edge between the 2-block and 3-block above the special vertex.
\begin{align*}
T(\emptyset) = 1, \quad T(\{p_1,p_2,p_3,p_4\}) = x_2x_3x_4x_5, \quad T(\{p_1,p_3\}) = x_3x_4, \\
T(\{p_2,p_4\}) = x_2x_5, \quad T(\{p_1,p_4\}) = x_2x_4, \quad T(\{p_2,p_3\}) = x_3x_5.
\end{align*}

\subsubsection{Case 2}
$2-n \leq k \leq -5n.$ Unbalanced Kuo. Shape $(+,+,-,+,-)$.

$$G := \hatcal{G}\left(k+n-1, -\left\lceil \frac{k+5n}{2} \right\rceil+1, 2n-1, \left\lfloor \frac{k-3n}{2} \right\rfloor, n-k\right)$$

\begin{align*}
-\{p_1\} (W): &K\rightarrow(0,-1,+1,-1,+1)-R\\
		      &R\rightarrow(0,0,+1,0,+1)-K\\
-\{p_2\} (W): &\rightarrow (-1,+1,0,0,+1)\\
-\{p_3\} (W): &\rightarrow (+1,-1,+1,0,0)\\
-\{p_4\} (B): &K\rightarrow(0,-1,0,-1,0)-R\\
			  &R\rightarrow(0,0,0,0,0)-K
\end{align*}

\begin{align*}
G-\{p_3\} =& A^{n^2+n} B^{n^2} x_{2k+3} \\
G-\{p_1, p_2, p_4\} =& A^{n^2+n} B^{n^2} x_{2k-1} \\
G-\{p_1\} =& A^{n^2+n} B^{n^2} x_{2k+1} \\
G-\{p_2, p_3, p_4\} =& A^{n^2+n} B^{n^2} x_{2k+1} \\
G-\{p_2\} =& A^{n^2} B^{n^2-n} x_{2k} \\
G-\{p_1, p_3, p_4\} =& A^{(n+1)^2} B^{(n+1)^2-(n+1)} x_{2k+2}
\end{align*}

$G-K$ (Special vertex kept): let $p_1$ be the topmost (W) point on edge d, $p_2$ be the topmost (W) point on edge a, $p_3$ be the bottommost (W) point on edge b, $p_4$ be the (B) point on the edge between the 4-block and 5-block above the special vertex.
\begin{align*}
T(\{p_3\}) = 1, \quad T(\{p_1,p_2,p_4\}) = x_4x_4x_5x_5, \quad T(\{p_1\}) = x_4x_5, \\
T(\{p_2,p_3,p_4\}) = x_4x_5, \quad T(\{p_2\}) = x_4x_4, \quad T(\{p_1,p_3,p_4\}) = x_5x_5.
\end{align*}

$G-R$ (Special vertex removed): let $p_1$ be the topmost (W) point on edge d, $p_2$ be the topmost (W) point on edge a, $p_3$ be the bottommost (B) point on edge b, $p_4$ be the (B) point on the edge between the 2-block and 3-block above the special vertex.
\begin{align*}
T(\{p_3\}) = x_2, \quad T(\{p_1,p_2,p_4\}) = x_4x_4x_5, \quad T(\{p_1\}) = x_4, \\
T(\{p_2,p_3,p_4\}) = x_2x_4x_5, \quad T(\{p_2\}) = x_5, \quad T(\{p_1,p_3,p_4\}) = x_2x_4x_4.
\end{align*}

\subsubsection{Case 3}
$2\leq k\leq 1-n.$ Balanced Kuo. Shape $(-,+,-,+,-)$.

$$G := \hatcal{G}\left(k+n-2, -\left\lceil \frac{k+5n}{2} \right\rceil+2, 2n-1, \left\lfloor \frac{k-3n}{2} \right\rfloor, n-k+1\right)$$

\begin{align*}
-\{p_1\} (W): &K\rightarrow(0,-1,+1,-1,+1)-R\\
		      &R\rightarrow(0,0,+1,0,+1)-K\\
-\{p_2\} (B): &\rightarrow (+1,-1,0,0,-1)\\
-\{p_3\} (W): &\rightarrow (+1,-1,+1,0,0)\\
-\{p_4\} (B): &K\rightarrow(0,-1,0,-1,0)-R\\
			  &R\rightarrow(0,0,0,0,0)-K
\end{align*}

\begin{align*}
G-\{p_2, p_3\} =& A^{n^2+n} B^{n^2} x_{2k+3} \\
G-\{p_1, p_4\} =& A^{n^2+n} B^{n^2} x_{2k-1} \\
G-\{p_1, p_2\} =& A^{n^2+n} B^{n^2} x_{2k+1} \\
G-\{p_3, p_4\} =& A^{n^2+n} B^{n^2} x_{2k+1} \\
G-\{p_1, p_2, p_3, p_4\} =& A^{(n+1)^2} B^{(n+1)^2-(n+1)} x_{2k+2} \\
G =& A^{n^2} B^{n^2-n} x_{2k}
\end{align*}

$G-K$ (Special vertex kept): let $p_1$ be the topmost (W) point on edge d, $p_2$ be the bottommost (B) point on edge a, $p_3$ be the topmost (W) point on edge b (not in a forced matching), $p_4$ be the (B) point on the edge between the 4-block and 5-block above the special vertex.
\begin{align*}
T(\{p_2,p_3\}) = x_2x_2, \quad T(\{p_1,p_4\}) = x_4x_4, \quad T(\{p_1,p_2\}) = x_2x_4, \\
T(\{p_3,p_4\}) = x_2x_4, \quad T(\{p_1,p_2,p_3,p_4\}) = x_2x_2x_4x_4, \quad T(\emptyset) = 1.
\end{align*}

$G-R$ (Special vertex removed): let $p_1$ be the topmost (W) point on edge d, $p_2$ be the bottommost (B) point on edge a, $p_3$ be the topmost (W) point on edge b (not in a forced matching), $p_4$ be the (B) point on the edge between the 2-block and 3-block above the special vertex.
\begin{align*}
T(\{p_2,p_3\}) = x_2x_2, \quad T(\{p_1,p_4\}) = x_2x_4, \quad T(\{p_1,p_2\}) = x_2x_4, \\
T(\{p_3,p_4\}) = x_2x_2, \quad T(\{p_1,p_2,p_3,p_4\}) = x_2x_2x_2x_4, \quad T(\emptyset) = 1.
\end{align*}

%%%%%%%%%%%%%%%%%%%%%%%%%%%%%%%%%%%%%%%%%%%%%%%%%%%%
\subsection{$A^{n^2+n}B^{n^2}x_{2k-1}$, $n\leq-1$, $k\geq3$}

Recurrence we use: 
\begin{align*}
&(A^{n^2+n}B^{n^2}x_{2k-1})(A^{(n+1)(n+2)}B^{(n+1)^2}x_{2k+1}) \\
=&(A^{(n+1)^2}B^{(n+1)n}x_{2k-2})(A^{(n+1)^2}B^{(n+1)n}x_{2k+2}) + (A^{(n+1)^2}B^{(n+1)n}x_{2k})^2
\end{align*}

Kuo's four points: $p_1,p_2,p_3,p_4$ are on edge $d,a,b,c$ respectively.

The effect of removing $p_1,p_2,p_3,p_4$ and the sets used in the proof of covering monomial are the same as in Section~\ref{sec:caselisting3}. 

\subsubsection{Case 1}
$k \geq -1-5n.$ Non-alternating Kuo. Shape $(+,-,-,+,-)$.

$$G := \hatcal{G}\left(k+n, -\left\lceil \frac{k+5n+2}{2} \right\rceil, 2(n+1)-1, \left\lfloor \frac{k-3n-2}{2} \right\rfloor, 1+n-k\right)$$

\begin{align*}
G =& A^{(n+1)^2} B^{(n+1)n} x_{2k+2} \\
G-\{p_1, p_2, p_3, p_4\} =& A^{(n+1)^2} B^{(n+1)n} x_{2k-2} \\
G-\{p_1, p_3\} =& A^{(n+1)^2} B^{(n+1)n} x_{2k} \\
G-\{p_2, p_4\} =& A^{(n+1)^2} B^{(n+1)n} x_{2k} \\
G-\{p_1, p_4\} =& A^{(n+1)^2+(n+1)} B^{(n+1)^2} x_{2k+1} \\
G-\{p_2, p_3\} =& A^{n^2+n} B^{n^2} x_{2k-1}
\end{align*}

\subsubsection{Case 2}
$2-n\leq k \leq -2-5n.$ Unbalanced Kuo. Shape $(+,+,-,+,-)$.

$$G := \hatcal{G}\left(k+n-1, -\left\lceil \frac{k+5n+2}{2} \right\rceil+1, 2(n+1)-1-1, \left\lfloor \frac{k-3n-2}{2} \right\rfloor, 1+n-k\right)$$

\begin{align*}
G-\{p_3\} =& A^{(n+1)^2} B^{(n+1)n} x_{2k+2} \\
G-\{p_1, p_2, p_4\} =& A^{(n+1)^2} B^{(n+1)n} x_{2k-2} \\
G-\{p_1\} =& A^{(n+1)^2} B^{(n+1)n} x_{2k} \\
G-\{p_2, p_3, p_4\} =& A^{(n+1)^2} B^{(n+1)n} x_{2k} \\
G-\{p_1, p_3, p_4\} =& A^{(n+1)^2+(n+1)} B^{(n+1)^2} x_{2k+1} \\
G-\{p_2\} =& A^{n^2+n} B^{n^2} x_{2k-1}
\end{align*}

\subsubsection{Case 3}
$2\leq k \leq 1-n.$ Balanced Kuo. Shape $(-,+,-,+,-)$.

$$G := \hatcal{G}\left(k+n, -\left\lceil \frac{k+5n+2}{2} \right\rceil, 2(n+1)-1, \left\lfloor \frac{k-3n-2}{2} \right\rfloor, 1+n-k\right)$$

\begin{align*}
G-\{p_2,p_3\} =& A^{(n+1)^2} B^{(n+1)n} x_{2k+2} \\
G-\{p_1, p_4\} =& A^{(n+1)^2} B^{(n+1)n} x_{2k-2} \\
G-\{p_1, p_3\} =& A^{(n+1)^2} B^{(n+1)n} x_{2k} \\
G-\{p_2, p_4\} =& A^{(n+1)^2} B^{(n+1)n} x_{2k} \\
G-\{p_1, p_2,p_3,p_4\} =& A^{(n+1)^2+(n+1)} B^{(n+1)^2} x_{2k+1} \\
G =& A^{n^2+n} B^{n^2} x_{2k-1}
\end{align*}

%%%%%%%%%%%%%%%%%%%%%%%%%%%%%%%%%%%%%%%%%%%%%%%%%%%%%
\subsection{$A^{n^2+n}B^{n^2}x_3$, $n\geq1$}

Recurrence we use: 
\begin{align*}
&(A^{n(n+1)}B^{n^2}x_3)(A^{n^2}B^{n(n-1)}x_8) \\
=&(A^{n(n+1)}B^{n^2}x_5)(A^{n^2}B^{n(n-1)}x_6) + (A^{n(n+1)}B^{n^2}x_7)(A^{n^2}B^{n(n-1)}x_4).
\end{align*}

Kuo's four points: $p_1,p_2,p_3,p_4$ are on edge $e,a,c,d$ respectively.

When $n =1$: can verify the contour match the graph using Balanced Kuo or just directly verify the matching polynomial.

Let $n \geq 2$. Unbalanced Kuo. Shape $(+,-,+,-,+)$.

When $n+3$ is odd, let $$G := \hatcal{G}\left(3-2+n, -\left\lceil \frac{3-2+5n}{2} \right\rceil, 2n, \left\lfloor \frac{3-3n-2}{2} \right\rfloor, n-3+2\right)-R$$

When $n+3$ is even, let $$G := \hatcal{G}\left(3-2+n, -\left\lceil \frac{3-2+5n}{2} \right\rceil-1, 2n, \left\lfloor \frac{3-3n-2}{2} \right\rfloor-1, n-3+2\right)-K$$

\begin{align*}
-\{p_1\} (W): &\rightarrow(+1,0,0,+1,-1)\\
-\{p_2\} (W): &\rightarrow (-1,+1,0,0,+1)\\
-\{p_3\} (W): &K\rightarrow (0,0,0,0,0)-R\\
			&R\rightarrow (0,+1,0,+1,0)-K\\
-\{p_4\} (B): &K\rightarrow(0,0,-1,0,-1)-R\\
			  &R\rightarrow(0,+1,-1,+1,-1)-K
\end{align*}

\begin{align*}
G-\{p_1\} =& A^{n^2+n} B^{n^2} x_7 \\
G-\{p_2, p_3, p_4\} =& A^{n^2} B^{n(n-1)} x_4 \\
G-\{p_3\} =& A^{n^2+n} B^{n^2} x_{5} \\
G-\{p_1, p_2, p_4\} =& A^{n^2} B^{n(n-1)} x_6 \\
G-\{p_2\} =& A^{n^2+n} B^{n^2} x_{3} \\
G-\{p_1, p_3, p_4\} =& A^{n^2} B^{n(n-1)} x_{8}
\end{align*}

$G-K$ (Special vertex kept): let $p_1$ be the leftmost (W) point on edge e (the bottommost point of edge d), $p_2$ be the bottommost (W) point on edge a (not in a forced matching), $p_3$ be the special vertex, $p_4$ be the (B) point on the edge between the 1-block and 4-block below the special vertex.
\begin{align*}
T(\{p_1\}) = x_1, \quad T(\{p_2,p_3,p_4\}) = x_1x_3x_5, \quad T(\{p_3\}) = x_3, \\
T(\{p_1,p_2,p_4\}) = x_1x_1x_5, \quad T(\{p_2\}) = x_5, \quad T(\{p_1,p_3,p_4\}) = x_1x_1x_3.
\end{align*}

$G-R$ (Special vertex removed): let $p_1$ be the leftmost (W) point on edge e (the bottommost point of edge d), $p_2$ be the bottommost (W) point on edge a (not in a forced matching), $p_3$ be the (W) point below the special vertex, $p_4$ be the (B) point below $p_3$.
\begin{align*}
T(\{p_1\}) = x_1, \quad T(\{p_2,p_3,p_4\}) = x_1x_3x_5, \quad T(\{p_3\}) = x_1, \\
T(\{p_1,p_2,p_4\}) = x_1x_3x_5, \quad T(\{p_2\}) = x_5, \quad T(\{p_1,p_3,p_4\}) = x_1x_1x_3.
\end{align*}

%%%%%%%%%%%%%%%%%%%%%%%%%%%%%%%%%%%%%%%%%%%%%%%%%%%%%
\subsection{$A^{n^2+n}B^{n^2}x_3$, $n\leq-1$}

Recurrence we use: 
\begin{align*}
&(A^{n(n+1)}B^{n^2}x_3)(A^{(n+1)^2}B^{n(n+1)}x_8) \\
=&(A^{n(n+1)}B^{n^2}x_5)(A^{(n+1)^2}B^{n(n+1)}x_6) + (A^{n(n+1)}B^{n^2}x_7)(A^{(n+1)^2}B^{n(n+1)}x_4)
\end{align*}

Kuo's four points: $p_1,p_2,p_3,p_4$ are on edge $a,e,c,b$ respectively.

When $n=-1$, can check directly to see contour for $Bx_3$ is correct.

Let $n \leq -2$. Unbalanced Kuo. Shape $(-,+,-,+,-)$.

When $n+3$ is odd, let $$G := \hatcal{G}\left(3-2+n, -\left\lceil \frac{3-2+5n}{2} \right\rceil+1, 2n, \left\lfloor \frac{3-3n-2}{2} \right\rfloor+1, n-3+2\right)-K$$

When $n+3$ is even, let $$G := \hatcal{G}\left(3-2+n, -\left\lceil \frac{3-2+5n}{2} \right\rceil-1, 2n, \left\lfloor \frac{3-3n-2}{2} \right\rfloor-1, n-3+2\right)-R$$

\begin{align*}
-\{p_1\} (B): &\rightarrow(+1,-1,0,0,-1)\\
-\{p_2\} (B): &\rightarrow (-1,+1,0,0,+1)\\
-\{p_3\} (B): &K\rightarrow (0,-1,0,-1,0)-R\\
			&R\rightarrow (0,0,0,0,0)-K\\
-\{p_4\} (W): &\rightarrow(+1,-1,+1,0,0)
\end{align*}

\begin{align*}
G-\{p_1\} =& A^{n^2+n} B^{n^2} x_7 \\
G-\{p_2, p_3, p_4\} =& A^{n^2} B^{n(n-1)} x_4 \\
G-\{p_3\} =& A^{n^2+n} B^{n^2} x_{5} \\
G-\{p_1, p_2, p_4\} =& A^{n^2} B^{n(n-1)} x_6 \\
G-\{p_2\} =& A^{n^2+n} B^{n^2} x_{3} \\
G-\{p_1, p_3, p_4\} =& A^{n^2} B^{n(n-1)} x_{8}
\end{align*}

$G-K$ (Special vertex kept): let $p_1$ be the topmost (B) point on edge a (in a forced matching), $p_2$ be the rightmost (B) point on edge e (in a forced matching), $p_3$ be the (B) point with 3 neighbors on the 3-block above the special vertex, $p_4$ be the bottommost (W) point on edge b.
\begin{align*}
T(\{p_1\}) = x_1, \quad T(\{p_2,p_3,p_4\}) = x_2x_3x_5, \quad T(\{p_3\}) = x_5, \\
T(\{p_1,p_2,p_4\}) = x_1x_2x_3, \quad T(\{p_2\}) = x_3, \quad T(\{p_1,p_3,p_4\}) = x_1x_2x_5.
\end{align*}

$G-R$ (Special vertex removed): let $p_1$ be the topmost (B) point on edge a (in a forced matching), $p_2$ be the rightmost (B) point on edge e (in a forced matching), $p_3$ be the (B) point on the edge between the 2-block and 3-block above the special vertex, $p_4$ be the bottommost (W) point on edge b.
\begin{align*}
T(\{p_1\}) = x_1, \quad T(\{p_2,p_3,p_4\}) = x_2x_2x_5, \quad T(\{p_3\}) = x_2, \\
T(\{p_1,p_2,p_4\}) = x_1x_2x_5, \quad T(\{p_2\}) = x_5, \quad T(\{p_1,p_3,p_4\}) = x_1x_2x_2.
\end{align*}

\begin{comment}
\begin{figure}[htp!]
\includegraphics[scale = 0.2]{p1BK.pdf}
\includegraphics[scale = 0.2]{p1BR.pdf}
\includegraphics[scale = 0.2]{p1WK.pdf}
\includegraphics[scale = 0.2]{p1WR.pdf}
\caption{Other Effects of $p_1$}
\label{fig:other_p1}
\end{figure}

\begin{figure}[htp!]
\includegraphics[scale = 0.13]{p2B.pdf}
\includegraphics[scale = 0.2]{p2W.pdf}
\caption{Other Effects of $p_2$}
\label{fig:other_p2}
\end{figure}

\begin{figure}
\includegraphics[scale = 0.2]{p3B.pdf}
\includegraphics[scale = 0.2]{p3W.pdf}
\caption{Other Effects of $p_3$}
\label{fig:other_p3}
\end{figure}

\begin{figure}[htp!]
\includegraphics[scale = 0.13]{p4BK.pdf}
\includegraphics[scale = 0.13]{p4BR.pdf}
\includegraphics[scale = 0.2]{p4WK.pdf}
\includegraphics[scale = 0.2]{p4WR.pdf}
\caption{Other Effects of $p_4$}
\label{fig:other_p4}
\end{figure}
\end{comment}
\end{document}